\theoremstyle{plain}
\newtheorem{thm}{Theorem}[section]
\newtheorem{lem}{Lemma}[section]
\newtheorem{prop}{Proposition}[section]
\newtheorem{defs}{Definition}[section]
\newtheorem{prob}{Problem}[section]
\theoremstyle{definition}
\newtheorem{rmk}{Remark}[section]
\newcommand{\NN}{{\mathbb{N}}}
\newcommand{\ZZ}{{\mathbb{Z}}}
\newcommand{\RR}{{\mathbb{R}}}
\newcommand{\bu}{\mathbf{u}}
\newcommand{\bv}{\mathbf{v}}
\newcommand{\bw}{\mathbf{w}}
\newcommand{\be}{\mathbf{e}}
\newcommand{\bb}{\mathbf{b}}
\newcommand{\bg}{\mathbf{g}}
\newcommand{\bx}{\mathbf{x}}
\newcommand{\by}{\mathbf{y}}
\newcommand{\bbf}{\mathbf{f}}
\newcommand{\bA}{\mathbf{A}}
\newcommand{\bB}{\mathbf{B}}
\newcommand{\bX}{\mathbf{X}}
\newcommand{\bvarphi}{\boldsymbol{\varphi}}
\newcommand{\bxi}{\boldsymbol{\xi}}
\newcommand{\bnabla}{\boldsymbol{\nabla}}
\newcommand{\bfeta}{\boldsymbol{\eta}}
\newcommand{\bgamma}{{\boldsymbol{\gamma}}}
\newcommand{\define}{\stackrel{\text{\rm def}}{=}}
\newcommand{\Ccal}{{\mathcal C}}
\newcommand{\Acal}{{\mathcal A}}
\newcommand{\Vcal}{{\mathcal V}}
\newcommand{\loc}{{\text{\rm loc}}}
\newcommand{\Dcal}{{\mathcal D}}
\newcommand{\rd}{{\text{\rm d}}}
\newcommand{\sfA}{\mathsf{A}}
\newcommand{\sfB}{\mathsf{B}}
\newcommand{\sfD}{\mathsf{D}}
\newcommand{\sfF}{\mathsf{F}}
\newcommand{\sfQ}{\mathsf{Q}}
\newcommand{\sfT}{\mathsf{T}}
\newcommand{\sfI}{\mathsf{I}}
\newcommand{\sfL}{\mathsf{L}}
\newcommand{\sfM}{\mathsf{M}}
\newcommand{\sfS}{\mathsf{S}}
\newcommand{\sfsigma}{\mathsf{\sigma}}
\newcommand{\tr}{\text{\rm tr}}
\newcommand{\Tr}{\operatorname{Tr}}
\newcommand{\essinf}{\operatorname*{ess\,inf}}
\newcommand{\dual}[1]{\langle{#1}\rangle}
\begin{document}
\numberwithin{equation}{section}


\title[On a viscoelastic Stokes problem for salt rocks]{On the existence, uniqueness and regularity of solutions of a viscoelastic Stokes problem modelling salt rocks}

\author{R. A. Cipolatti}
\author{I.-S. Liu}
\author{L. A. Palermo}
\author{M. A. Rincon}
\author{R. M. S. Rosa}

\address[Rolci A. Cipolatti]{Instituto de Matem\'atica, Universidade Federal do Rio de Janeiro, Brazil}
\address[I-Shih Liu]{Instituto de Matem\'atica, Universidade Federal do Rio de Janeiro, Brazil}
\address[Luiz A. Palermo]{CENPES/Petrobr\'as, Rio de Janeiro, Brasil}
\address[Mauro A. Rincon]{Instituto de Matem\'atica, Universidade Federal do Rio de Janeiro, Brazil}
\address[Ricardo M. S. Rosa]{Instituto de Matem\'atica, Universidade Federal do Rio de Janeiro, Brazil}

\email[R. A. Cipolatti]{cipolatti@im.ufrj.br}
\email[I.-S. Liu]{liu@im.ufrj.br}
\email[L. A. Palermo]{luizpalermo@petrobras.com.br}
\email[M. A. Rincon]{rincon@dcc.ufrj.br}
\email[R. M. S. Rosa]{rrosa@im.ufrj.br}

\date{January 25, 2017}

\thanks{All the authors acknowledge the financial support of CENPES/PETROBR\'AS. The last author was also partly supported by CNPq, Bras\'{\i}lia, Brazil.}

\subjclass[2000]{35Q30, 76D06, 35B40, 37A60}
\keywords{generalized Stokes problem, elliptic regularization, viscoelastic fluids, salt modelling}

\begin{abstract}
A Stokes-type problem for a viscoelastic model of salt rocks is considered, and existence, uniqueness and regularity are investigated in the scale of $L^2$-based Sobolev spaces. The system is transformed into a generalized Stokes problem, and the proper conditions on the parameters of the model that guarantee that the system is uniformly elliptic are given. Under those conditions, existence, uniqueness and low-order regularity are obtained under classical regularity conditions on the data, while higher-order regularity is proved under less stringent conditions than classical ones. Explicit estimates for the solution in terms of the data are given accordingly.
\end{abstract}

\maketitle

\section{Introduction}
We are interested in the viscoelastic Stokes-type problem 
\begin{equation}
  \label{viscoelasticStokesorg}
   \begin{cases}
      - \mu_1 \Delta \bv - \mu_2 \bnabla \cdot (\sfD(\bv)\sfB + \sfB\sfD(\bv)) \\ 
      \qquad \qquad \qquad -\mu_ 3\bnabla \cdot (\sfD(\bv)\sfB^{-1} + \sfB^{-1} \sfD(\bv)) + \bnabla p = \bbf, & \text{in } \Omega,\\
      \bnabla \cdot \bv = 0, & \text{in } \Omega,\\
      \bv = 0, & \text{on } \partial \Omega,
    \end{cases}
\end{equation}
on a three-dimensional bounded, smooth domain $\Omega\subset \RR^3$, where $\bx=(x_1,x_2,x_3)\in \RR^3$ is the spatial variable and the unknowns are the scalar field $p=p(\bx)$ and the three-dimensional vector field $\bv=\bv(\bx)$, with coordinates $\bv=(v_1,v_2,v_3)$. In this system of equations, $\sfD=\sfD(\bv)$ is the symmetric gradient tensor given by
\begin{equation}
  \label{straintensor}
  \sfD(\bv) = \frac{1}{2}(\bnabla \bv + (\bnabla \bv)^\tr),
\end{equation}
where the superscript ``${}\tr$'' denotes the transpose of a matrix; $\sfB=\sfB(\bx)$ is a given symmetric, positively defined tensor with coordinates $\sfB=(b_{ij})_{ij}$; and the parameters $\mu_1, \mu_2, \mu_3$ are scalar fields.

Our aim is to obtain conditions on the parameters $\mu_1, \mu_2$, and $\mu_3$, and on the tensor $\sfB$ that guarantee that the problem \eqref{viscoelasticStokesorg} is uniformly elliptic and that we have existence, uniqueness and regularity properties of the solutions. We also aim to give explicit estimates for the regularity of the solutions depending on the regularity of $\sfB$, $\bbf$, and the parameters $\mu_1, \mu_2, \mu_3$. 

Problem \eqref{viscoelasticStokesorg} does not seem to have been treated directly before. We rewrite \eqref{viscoelasticStokesorg} into a form that fits previously considered general system and verify the necessary conditions that guarantee the existence, uniqueness and the desired regularity up to $H^2$ in $\bv$ and $H^1$ in $p$. For higher-order regularity, the known general results require too much regularity from $\sfB$ and we present a proof that yields the desired regularity upon less stringent conditions. Then we work out the estimates in detail. These are particularly important for the application of this result to the time-dependent problem mentioned below and addressed in a follow-up work.

The parameters $\mu_1, \mu_2$, and $\mu_3$ need not be constant, they are assumed to be general scalar fields. In applications, this dependence is typically on invariants of the model, such as the trace  of $\sfB$, or of powers of $\sfB$, and they may also depend on the temperature distribution of the medium. The actual dependence is important when analysing the corresponding evolutionary problem coupled with  equations for $\sfB$ and for the temperature. Here, however, we consider $\mu_1, \mu_2$, and $\mu_3$ as given general scalar fields. 

Problem \eqref{viscoelasticStokesorg} arises as the dissipative part for the linear momentum equation of a homogeneous and incompressible model of salt rocks, for which the constitutive law for the stress tensor $\sfsigma$ is of the form
 \begin{equation}
  \label{sigmaconstitutive}
   \sfsigma = -p\sfI + 2\mu_1 \sfD + s_1 \sfB + s_{-1} \sfB^{-1} + \mu_2 (\sfD\sfB + \sfB\sfD) + \mu_3(\sfD\sfB^{-1} + \sfB^{-1}\sfD).
\end{equation}
In \eqref{sigmaconstitutive}, the tensor $\sfI$ is the identity in three-dimension; $p$ is the kinematic pressure; $\bv$ is the velocity field; $\bnabla \bv$ is the velocity gradient; $\sfD$ is the strain-rate tensor defined in \eqref{straintensor}, which is the symmetric part of the velocity gradient;  $\sfB=\sfF\sfF^\tr$ is the left Cauchy-Green tensor; $\sfF$ is the deformation gradient tensor; $s_1$ and $s_{-1}$ are Mooney-Rivlin elastic parameters; the $\mu_1$ term is a Newtonian viscous stress; and $\mu_2$ and $\mu_3$ are Mooney-Rivlin-type parameters for the combined viscoelastic terms in the constitutive law (see \cite{truesdellnoll2004, haupt2002, Liu2002, temammiranville2005} for the fundamentals of continuum mechanics). 

The material model given by the stress tensor form \eqref{sigmaconstitutive} was introduced in 
\cite{LiuCipolattiRinconPalermo2014} and was referred to as \emph{Mooney-Rivlin type isotropic viscoelastic solid}. Notice that the purely elastic part of the constitutive equation is the same as that for Mooney-Rivlin isotropic elastic solids, while the viscous part is linear in the strain-rate tensor $\sfD$ and depends on the Cauchy-Green stress tensor $\sfB$ in a form similar to the purely elastic part.  This constitutive equation was successfully used to model the salt migration and the associated formation of salt diapirs (or salt domes) in a two-layer sediment-salt body problem, in which a denser elastic sediment material lies on top of a lighter layer of viscoelastic salt rock. The influence of the temperature in the formation of these salt domes was then investigated in \cite{teixeiraliurinconcipolattipalermo2014}. For other models and more information on this important geophysical problem, see the literature referenced in these two articles.

The associated evolutionary system of equations are written in the form
\begin{equation}
  \label{LiueulerB}
    \begin{cases}
      \sfB_t + (\bv\cdot \bnabla) \sfB = (\bnabla \bv) \sfB + ((\bnabla \bv)\sfB)^\tr, \\
      \rho(\bv_t + (\bv\cdot \bnabla) \bv) + \bnabla p =  \nabla \cdot \sfT(\sfB,\sfD(\bv)) - \rho g \be_3, \\
      \theta_t + (\bv\cdot\bnabla) \theta = \bnabla \cdot (\sfQ(\sfB) \bnabla \theta),  \\
      \nabla \cdot \bv = 0, 
    \end{cases}
\end{equation}
on the domain $\Omega$ and time interval $[0,T]$, with $T>0$, along with appropriate boundary conditions on $\partial \Omega$ and initial conditions at $t=0$. In the system \eqref{LiueulerB}, $g$ is the acceleration of gravity, $\be_3=(0,0,1)$ is the vertical direction, the viscoelastic stress tensor $\sfT(\sfB,\sfD)$ is given according to $\sfT = \sfsigma + p\sfI$, where $\sfsigma$ is as is \eqref{sigmaconstitutive}, the thermal conductivity tensor is given by $\sfQ(\sfB) = \kappa_1 \sfI + \kappa_2 \sfB + \kappa_3 \sfB^{-1}$, and the viscoelastic parameters $s_1$, $s_{-1}$, $\mu_1$, $\mu_2$ and $\mu_3$ and the thermal conductivity coefficients $\kappa_1, \kappa_2$, and $\kappa_3$ may depend on invariants of the left Cauchy-Green tensor $\sfB$ and on the temperature field $\theta$.

Notice that in the formulation \eqref{LiueulerB} the left Cauchy-Green stress tensor $\sfB$ becomes an unknown. This is the trick to write an Eulerian formulation for the model, instead of the classical Lagrangian formulation. 

The formulation \eqref{LiueulerB} is inspired by a similar formulation given in \cite{linliuzhang2005, linzhang2008} for a particular type of Oldroyd-B viscoelastic model (see also \cite{guillopesaut1990, lionsmasmoudi2000, cheminmasmoudi2001, liuwalkington2001, sideristhomases2005} for Eulerian formulations of more general types of Oldroyd-B models). In the model considered in \cite{linliuzhang2005, linzhang2008} the stress tensor is of the form $\sigma = -p\sfI + 2\mu\sfD + s\sfF\sfF^\tr$, so that the linear part is simply the classical Stokes problem, and the unknown is actually the deformation tensor $\sfF$. Our formulation, besides including a more complicated constitutive law, uses instead $\sfB$ as an unknown, which seems to have some computational advantages. The Lagrangian position $\bx(t,\bX)$ at time $t$ of a particle which initially was at $\bx(0,\bX) =\bX=(X_1,X_2,X_3)$ can be recovered from the vector field $\bv$, and then it can be showed that $\sfB$ is precisely $\sfF\sfF^\tr$, for $\sfF=\sfF(t,\bx)$ given according to $\sfF(t,\bx(t,\bX)) = D_\bX \bx(t,\bX)=(\partial x_i/\partial_{X_j}(t,\bX))_{ij}$ (see \cite[Lemma 1.1]{linliuzhang2005} for the corresponding result in the context of the model in \cite{linliuzhang2005}, in which the unkown is $\sfF$ itself). 

The details for the initial-value problem \eqref{LiueulerB} will be given in a forthcoming paper. The current work is an important step towards that, since proving existence, uniqueness, and regularity properties of the viscoelastic Stokes problem \eqref{viscoelasticStokesorg} is critical for proving local well-posedness of the evolutionary problem \eqref{LiueulerB}. In particular, we will exploit the explicit estimates obtained here to derived suitable a~priori estimates for the solutions of the evolutionary system.

The model of salt dynamics that we are interested in corresponds to particular combinations of the parameters $\mu_1, \mu_2$, and $\mu_3$, which may be different for different types of salts (see \cite{LiuCipolattiRinconPalermo2014} and \cite{teixeiraliurinconcipolattipalermo2014}). Nevertheless, we analyse the mathematical properties of the viscoelastic Stokes problem \eqref{viscoelasticStokesorg} for more general choices of the parameters, for possibly other types of materials.

In the Eulerian formulation \eqref{LiueulerB}, since the left Cauchy-Green tensor $\sfB$ is taken to be an unknown variable of the system, it is also of fundamental importance for us to derive the explicit dependence of the Sobolev estimates for the solution $\bv$ of problem \eqref{viscoelasticStokesorg} in terms not only on $\bbf$ but also on $\sfB$. This is part of the difficulty of the problem.

Regarding the notation, the operator $\bnabla = (\partial_{x_i})_i$ acts to yield the vector field $\bnabla p = (\partial_{x_i}p)_i$, the tensor $(\bnabla \bv)_{ij}=(\partial_{x_j}v_i)_{ij}$, the scalar field $\bnabla \cdot \bv = \sum_i \partial_{x_i} v_i$, and the vector field $\bnabla\cdot \sfA = (\sum_j\partial_{x_j}a_{ij})_i$, when $p$ is a scalar field, $\bv=(v_i)_i$ is a vector field, and $\sfA = (a_{ij})_{ij}$ is a tensor.  The Laplace operator can be written as $\Delta = \bnabla \cdot \bnabla$. For vectors $\bu, \bv$ and matrices $\sfA, \sfB$, we write $\bu\cdot\bv = \sum_i u_iv_i$, $\bu\cdot \sfA = (\sum_j u_j a_{ij})_{i}$, and $\sfA : \sfB = \sum_{ij} a_{ij}b_{ij}$, as well as $\sfA\sfB = (a_{ik}b_{kj})_{ij}$ and $\bu\otimes \bv = (u_iv_j)_{ij}$.

Now, concerning the system \eqref{viscoelasticStokesorg}, we address it in the following way. First, notice that, due to the divergence-free condition, we have
\[ \bnabla \cdot (\bnabla \bv)^\tr = \left(\sum_i \partial_{x_i} \partial_{x_j} v_i \right)_j = \bnabla (\bnabla \cdot \bv) = 0,
\]
so that
\[ \Delta \bv = \bnabla \cdot \bnabla \bv = \bnabla \cdot ((\bnabla \bv) + (\bnabla \bv)^\tr)  = 2\bnabla \cdot \sfD(\bv) = \bnabla \cdot (\sfD(\bv) \sfI + \sfI \sfD(\bv)).
\]
Hence, we write
\begin{multline*}  \mu_1 \Delta \bv + \mu_2 \bnabla \cdot (\sfD(\bv)\sfB + \sfB\sfD(\bv)) +\mu_ 3\bnabla \cdot (\sfD(\bv)\sfB^{-1} + \sfB^{-1} \sfD(\bv)) \\
  = \bnabla \cdot (\sfD(\bv)(\mu_1\sfI + \mu_2 \sfB + \mu_3 \sfB^{-1}) + (\mu_1\sfI + \mu_2 \sfB + \mu_3 \sfB^{-1})\sfD(\bv)).
\end{multline*}
In order to simplify the notation and also the analysis of the problem, we introduce the operator
\begin{equation}
  \label{defAcal}
  \Acal(\sfB) = \mu_1 \sfI + \mu_2 \sfB + \mu_3 \sfB^{-1},
\end{equation} 
and we write
\[  \mu_1 \Delta \bv + \mu_2 \bnabla \cdot (\sfD(\bv)\sfB + \sfB\sfD(\bv)) +\mu_ 3\bnabla \cdot (\sfD(\bv)\sfB^{-1} + \sfB^{-1} \sfD(\bv))
  = \sfD(\bv)\Acal(\sfB) + \Acal(\sfB)\sfD(\bv).
\]

This allows us to rewrite the viscoelastic Stokes problem \eqref{viscoelasticStokesorg} in the form
\begin{equation}
  \label{viscoelasticStokes}
   \begin{cases}
      - \bnabla \cdot (\sfD(\bv)\Acal(\sfB)+ \Acal(\sfB)\sfD(\bv)) + \bnabla p = \bbf, & \text{in } \Omega,\\
      \bnabla \cdot \bv = 0, & \text{in } \Omega,\\
      \bv = 0, & \text{on } \partial \Omega.
    \end{cases}
\end{equation}
We then separate the analysis of problem \eqref{viscoelasticStokes} into two parts. First we consider the generalized Stokes problem
\begin{equation}
  \label{generalizedStokes}
   \begin{cases}
      - \bnabla \cdot (\sfD(\bv)\sfA + \sfA\sfD(\bv)) + \bnabla p = \bbf, & \text{in } \Omega,\\
      \bnabla \cdot \bv = 0, & \text{in } \Omega,\\
      \bv = 0, & \text{on } \partial \Omega.
    \end{cases}
\end{equation}
for a given tensor $\sfA$, and investigate the existence, uniqueness and regularity of solutions of problem \eqref{generalizedStokes} in the case that $\sfA$ is an essentially bounded and uniformly positive definite symmetric tensor. 

The existence and uniqueness of solutions of the weak form of \eqref{generalizedStokes} together with the regularity $\bv\in H_0^1(\Omega)$, $p\in L^2(\Omega)$ follows from a classical application of Lax-Milgram Theorem, once the ellipticity is guaranteed (see Theorem \ref{weaksolutionweakgeneralizedStokeswithpressure}). The regularity $\bv\in H^2(\Omega)$, $p\in H^1(\Omega)$ follows by applying a result of Mach\'a \cite[Theorem 4.2 with $k=0$]{macha2011} for a slightly more general system, once all the conditions are verified, and along with that we obtain more explicit estimates in terms of all the data of the problem (see Theorem \ref{h2regularity}). For higher-order regularity, the case $k\geq 1$ in \cite[Theorem 4.2]{macha2011} is not optimal, and neither is any other result that we are aware of (the main reason is that the conditions on $\sfA$ are imposed independently of the space dimension and the Sobolev embeddings for particular dimensions are not explored; see Remark \ref{dontneedsomuchregularityonA}). We, therefore, work out the details of the proof for higher-order regularity and obtain the desired result under less stringent regularity assumptions on $\sfA$, along with the associated explicit estimates (see Theorem \ref{hkplustworegularity}).

Once the generalized Stokes problem \eqref{generalizedStokes} has been understood, we study the transformation \eqref{defAcal}, i.e. $\sfB \longmapsto \Acal(\sfB)$, and investigate the conditions on $\sfB$ and on $\mu_1, \mu_2, \mu_3$ that yield the proper conditions on $\Acal(\sfB)$ to ensure the regularity and the uniform ellipticity of \eqref{generalizedStokes} with $\sfA = \Acal(\sfB)$. More precisely, we give conditions on the parameters $\mu_1$, $\mu_2$, $\mu_3$ and $\sfB$ that guarantee that $\Acal(\sfB)$ is uniformly positive definite (Propositions \ref{propessinfglambdacondition} and \ref{propmuscenarios}) and sufficiently regular (Propositions \ref{proplinftyboundonacalb}, \ref{propwoneinftyboundonacalb}, \ref{propw23boundonacalb}, and \ref{prophmboundonacalb}). This yields a condition for \eqref{viscoelasticStokesorg} to be uniformly elliptic and for the existence, uniqueness and regularity results to hold for the viscoelastic Stokes problem as well (Theorems \ref{weaksolutionweakviscoelasticdStokeswithpressure}, \ref{h2regularityforviscoelasticStokes}, and \ref{hkplustworegularityforviscoelasticStokes}).

\section{Function spaces}

In this work, we consider a bounded domain $\Omega\subset \RR^3$, i.e. an open and connected bounded set in $\RR^3$. We denote by $L^p(\Omega)$ the classical Lebesgue spaces with $1\leq p \leq \infty$, with norm $\|\cdot \|_{L^p(\Omega)}$, and by $W^{k,p}(\Omega)$ the classical Sobolev spaces with $k\in \NN$ and $1\leq p \leq \infty$, of functions with derivatives up to order $k$ in $L^p(\Omega)$, with norm $\|\cdot\|_{W^{k,p}(\Omega)}$. In the case $p=2$, we denote $H^k(\Omega) = W^{k,2}(\Omega)$, with norm $\|\cdot\|_{H^{k}(\Omega)}$. 

We consider the three-dimensional vector-valued versions $L^p(\Omega)^3$, $W^{k,p}(\Omega)^3$, and $H^k(\Omega)^3$ of those spaces, although for notational simplicity we keep the notation for the norms and the inner products the same. For instance, the $L^p$ norm, $1\leq p < \infty$, of a vector field is such that
\[ \|\bv\|_{L^p(\Omega)} = \left(\sum_{i=1}^3 \|v_i\|_{L^p(\Omega)}^p\right)^{1/p},
\]
while the $W^{k,p}$ norm, $k\in \NN$, $1\leq p \leq \infty$ is such that
\[ \|\bv\|_{W^{k,p}(\Omega)} = \left( \sum_{i=1}^3 \|v_i\|_{W^{k,p}(\Omega)}^p \right)^{1/p}.
\]
The vector valued version of the space of test functions and distributions and their duality product is also considered. Similarly, we consider the tensor-valued version of the spaces $L^p(\Omega)$, $W^{k,p}(\Omega)$, and $H^k(\Omega)$, but keep the notation for the norms and inner products the same as those for scalars and vector fields.

We identify the dual of $L^2(\Omega)$ with itself and denote the dual space of $H_0^1(\Omega)$ by $H^{-1}(\Omega)$. For the sake of notational simplicity, we write the duality product as an integral even when the functional does not belong to $L^2(\Omega)$, i.e.
\[ \dual{f,\varphi}_{H^{-1}(\Omega),H_0^1(\Omega)} = \int_\Omega f \varphi \;\rd\bx.
\]

For higher-order derivatives, we consider a multi-index $\bgamma = (\gamma_1, \gamma_2, \gamma_3)\in \NN_0^3$, where each $\gamma_i\in \NN_0$ is a non-negative integer. The order of the multi-index is the number $|\bgamma| = \gamma_1 + \gamma_2 + \gamma_3$. The associated derivative $D^\bgamma$ of a scalar function $\varphi$ is 
\[ D^\bgamma \varphi = \partial_{x_1}^{\gamma_1}\partial_{x_2}^{\gamma_2}\partial_{x_3}^{\gamma_3}\varphi = \frac{\partial^{|\bgamma|}\varphi}{\partial_{x_1}^{\gamma_1}\partial_{x_2}^{\gamma_2}\partial_{x_3}^{\gamma_3}}. 
\]
Similarly for the $D^\bgamma$ derivative of vector fields and tensors. Notice that we can write $\bgamma = \gamma_1\be_1 + \gamma_2\be_2 + \gamma_3\be_3$, where $\be_1=(1,0,0)$, $\be_2=(0,1,0)$, and $\be_3=(0,0,1)$ form the canonical basis of $\RR^3$ and can also be regarded as the multi-indices of the first-order derivatives. Combining all the derivatives of a certain order $k$ yields the tensor 
\[  D^k\varphi = (D^\bgamma \varphi)_{|\bgamma| = k}, 
\]
where the index $\bgamma$ runs through all the $\NN_0^3$ indices with order $|\bgamma|=k$.

For dimensional reasons, we define the equivalent norms
\begin{equation}
  \label{dimensionalhigherordernorm}
  \| \varphi\|_{W^{k,p}_\lambda(\Omega)} = \sum_{j=0}^k \lambda^{(k-j)/2}\|D^j\varphi\|_{L^p(\Omega)},
\end{equation}
where $\lambda$ is a parameter with the physical dimensional of $(\text{length})^{-2}$ (such as the constant in the Poincar\'e inequality mentioned in \eqref{poincareineq} below). Notice that $\|\varphi\|_{W^{k,p}_\lambda(\Omega)}$ has the same physical dimension (unit) as $\|D^k\varphi\|_{L^p(\Omega)}$. Similarly for the vector-valued and tensor-valued versions of these spaces. In the case $p=2$, we write $W^{k,2}_\lambda(\Omega)=H^k_\lambda(\Omega)$. For negative powers, $H^{-k}(\Omega)$, for $k\in \NN$, is the dual of $H_0^k(\Omega)$, and $H^{-k}_{\lambda}(\Omega)$ is the dual norm associated with the norm $H^k_{\lambda}(\Omega)$ in the subspace $H_0^k(\Omega)$, i.e. 
\[ \| \xi \|_{H^{-k}_{\lambda}(\Omega)} = \sup_{\varphi\in H_0^k(\Omega), \;\|\varphi\|_{H^k_\lambda(\Omega)=1}}\dual{\xi,\varphi}, \quad \forall \xi\in H^{-k}_{\lambda}(\Omega),
\]
where $\dual{\cdot,\cdot}$ is the corresponding duality product.

A useful inequality is the following interpolation inequality, balancing different orders of derivatives:
\begin{equation}
  \label{interpolationineq}
  \|\varphi\|_{H^m_\lambda(\Omega)} \leq c(\Omega)\|\varphi\|_{H^j_\lambda(\Omega)}^\theta\|\varphi\|_{H^k_\lambda(\Omega)}^{(1-\theta)},
\end{equation}
for any $\varphi\in H^k_\lambda(\Omega)$ and for any $j\leq m\leq k$, where $c(\Omega)$ is a non-dimensional constant that only depends on the shape of $\Omega$, and $\theta\in ]0,1[$ is given by
\begin{equation}
  \label{interpolationtheta}
  m = \theta j + (1-\theta)k.
\end{equation}
The same inequality holds with $\varphi$ replaced by a vector field or a tensor.

Concerning the constant that appears in \eqref{interpolationineq} and in many other inequalities involving functional norms given in this work, we call a ``universal constant'' any term that is independent of any data of the problem, and by a ``shape constant'' any term that may depend only on the shape of $\Omega$, i.e. it is independent of translations, rotations and dilations of the spatial domain. The universal constants will be denoted by $c$, while shape constants will be denoted by $c(\Omega)$. From a physical point of view, they are non-dimensional constants. All the estimates in this work have been properly dimensionalized, in the sense that the constants are non-dimensional, so that all the dimensional quantities appear explicitly and with the proper dimensionalization (e.g. the exact power).

As usual, the pressure term in \eqref{viscoelasticStokes} or in \eqref{generalizedStokes} satisfies an equation of the form $\bnabla p = \bg$ and is, therefore, obtained up to a constant. In this regard, it is useful to consider the quotient space $L^2(\Omega)/\RR$, which is the space $L^2(\Omega)$ quotient equivalence up to translation by a constant. More precisely, two functions $\varphi$ and $\psi$ in $L^2(\Omega)$ are equivalent up to translations by a constant if $\varphi-\psi$ is constant in $\Omega$. In particular, $\varphi$ is in the same equivalent class as its zero mean representative $\varphi - \int_\Omega \varphi$. Such a quotient space is also a Banach space. Given a function $\varphi$ in $L^2(\Omega)$, this function belongs to an equivalence class in $L^2(\Omega)/\RR$ which with a certain abuse of notation we still denote by $\varphi$, for simplicity, and is such that its norm in the equivalence class is given by
\[ \|\varphi\|_{L^2(\Omega)/\RR} = \inf_{c\in \RR} \|\varphi + c\|_{L^2(\Omega)} = \left\|\varphi - \int_\Omega \varphi\right\|_{L^2(\Omega)}.
\]

We denote by $\Ccal^0(\bar\Omega)$ the space of continuous functions defined on $\bar\Omega$ and by $\Ccal^k(\bar\Omega)$, the subspace of $\Ccal^0(\bar\Omega)$ with derivatives up to order $k$ defined and continuous on $\Omega$ and with well defined extensions up to the boundary. We define the space $\Ccal^{k,\alpha}(\bar\Omega)$, $k\in \ZZ$, $k\geq 0$, $0<\alpha\leq 1$, as the subspace of $\Ccal^k(\bar\Omega)$ such that the derivatives of order $k$ are H\"older-continuous function with H\"older exponente $\alpha$, or a Lipschitz function, in the case $\alpha=1$.

We follow \cite[page 77]{adn2} (see also \cite{adamsfournier2003} for related and more general notions) and say that the domain $\Omega$ has a boundary of class $\Ccal^k$, $k\in \ZZ$, $k\geq 0$, when its boundary $\partial \Omega$ can be covered by a finite number of open sets $U_m$, $m=1,\ldots, M$, $M\in \NN$, such that each portion $\overline{U_m}\cap\overline{\Omega}$ of the domain is homeomorphic to a hemisphere, the portion $\overline{U_m}\cap\partial \Omega$ is the image of the flat face of the hemisphere, and the homeomorphism and its inverse are of class $\Ccal^k$, with all derivatives up to order $k$ bounded by a constant independent of $m$. Similarly, we say that $\Omega$ has a boundary of class $\Ccal^{k,\alpha}$, $k\in \ZZ$, $k\geq 0$, $0<\alpha\leq 1$, when $\Omega$ has a boundary of class $\Ccal^k$ and the $k$-th derivatives of the associated homeomorphism and of its inverse are H\"older continuous with exponent $\alpha$ and with modulus of H\"older continuity bounded independently of $m$. In the case $\Ccal^{0,1}$, we say that $\Omega$ has a Lipschitz boundary.

When $\Omega$ has a boundary of class $\Ccal^1$, the space $W^{1,\infty}(\Omega)$ can be identified with the space of Lipschitz functions $\Ccal^{0,1}(\bar\Omega)$, and the spaces $W^{1,p}(\Omega)$, $p>3$, can be identified with the space of H\"older functions $\Ccal^{0,1 - 3/p}(\bar\Omega)$ (see \cite[Theorem 5.6.5, pg 283]{evans2010}).\footnote{If $\varphi\in W^{1,\infty}(\Omega)$, let $\varphi_\varepsilon$ be a mollification of $\varphi$, which converges to $\varphi$ in any $W^{1,p}_\loc(\Omega)$, for $1\leq p < \infty$. Thus, a subsequence of $\varphi_\varepsilon$ converges to $\varphi$ and is such that $\bnabla\varphi_\varepsilon$ converges to $\bnabla\varphi$, almost everywhere in $\Omega$. Since $|\delta_i^h\varphi_\varepsilon|\leq \sup_\Omega|\bnabla\varphi_\varepsilon|$, we find at the limit that $\|\delta_i^h\varphi\|_{L^\infty(\Omega_0)} \leq \|\partial_{x_i}\varphi\|_{L^\infty(\Omega)}$, for every subdomain $\Omega_0$ compactly included in $\Omega$ and for every $h\neq 0$ sufficiently small such that $|h|\leq \textrm{dist}(\Omega_0,\partial\Omega)$.}

For the sake of the boundary conditions, we also consider the Hilbert space $H_0^1(\Omega)$, which is the completion, under the norm $H^1(\Omega)$, of the space of test functions $\Dcal(\Omega)=\Ccal_0^\infty(\Omega)$ of compactly supported, infinitely-many-times continuously differentiable functions on $\Omega$.  The dual $\Dcal(\Omega)'$ is the space of distributions, and the duality product between them is denoted $\dual{\cdot,\cdot}_{\Dcal(\Omega)', \Dcal(\Omega)'}$.

Since $\Omega$ is assumed to be bounded, we have the Poincar\'e inequality
\begin{equation}
  \label{poincareineq}
  \int_\Omega \varphi^2(\bx) \;\rd \bx \leq \frac{1}{\lambda_1} \int_\Omega |\bnabla \varphi(\bx)|^2 \;\rd\bx, \quad \forall \varphi \in H_0^1(\Omega).
\end{equation}
The notation comes from the fact that $\lambda_1$ ends up being the first eigenvalue of the Laplace operator. The Poincar\'e inequality extends, with the same constant, to the vector-valued functions in $H_0^1(\Omega)^3$.

In view of the Poincar\'e inequality \eqref{poincareineq}, the norm in $H_0^1(\Omega)$ is taken to be 
\begin{equation}
  \label{norminH01}
  \|\varphi\|_{H_0^1(\Omega)} = \|\bnabla \varphi\|_{L^2(\Omega)}, 
\end{equation}
and similarly in the vector-valued case $H_0^1(\Omega)^3$. For higher-order derivatives, we have, by induction and using integration by parts and Cauchy-Schwarz inequality, that
\begin{equation}
  \label{higherorderpoincare}
  \|D^j\varphi\|_{L^2(\Omega)}^2 \leq \frac{1}{\lambda_1^{k-j}} \|D^k\varphi\|_{L^2(\Omega)}^2, \quad \forall \varphi\in H_0^1(\Omega)\cap H^k(\Omega),
\end{equation}
for any integers $0\leq j \leq k$. In particular, the $H_{\lambda_1}^k$ norm in the subspace $H_0^1(\Omega)\cap H^k(\Omega)$ is equivalent to the $D^k$-norm, i.e.
\[ \|D^k\varphi\|_{L^2(\Omega)} \leq \|\varphi\|_{H_{\lambda_1}^k(\Omega)} \leq (k+1)\|D^k\varphi\|_{L^2(\Omega)}, \qquad \forall \varphi \in H_0^1(\Omega)\cap H^k(\Omega).
\]

In our case of a three-dimensional space, we have the embeddings $H^2(\Omega)\subset L^\infty(\Omega)$ (see \cite[Theorem 3.9]{agmon1965}) and $H^1(\Omega)\subset L^6(\Omega)$. In the particular case of vanishing boundary conditions, and using the Poincar\'e inequality \eqref{poincareineq}, we write, with the proper dimensionalization,
\begin{equation}
  \label{agmoninequality} 
  \|D^j\varphi\|_{L^\infty} \leq \frac{c_j(\Omega)}{\lambda_1^{1/4}}\|D^{j+2}\varphi\|_{L^2(\Omega)}, \quad \forall \varphi\in H_0^1(\Omega) \cap H^{j+2}(\Omega)
\end{equation}
and
\begin{equation}
  \label{l6inequality} 
  \|D^j\varphi\|_{L^6} \leq c_j(\Omega)\|D^{j+1}\varphi\|_{L^2(\Omega)}, \quad \forall \varphi\in H_0^1(\Omega)\cap H^{j+1}(\Omega),
\end{equation}
for any integer $j\geq 0$, where $c_j(\Omega)$ is a shape constant, which may be different for different $j$. Interpolating between \eqref{higherorderpoincare} and \eqref{l6inequality} we also find the inequality
\begin{equation}
  \label{l3inequality} 
  \|D^j\varphi\|_{L^3} \leq \frac{c_j(\Omega)}{\lambda_1^{1/4}}\|D^{j+1}\varphi\|_{L^2(\Omega)}, \quad \forall \varphi\in H_0^1(\Omega) \cap H^{j+1}(\Omega),
\end{equation}

In the case that $\varphi$ does not vanish on the boundary, a version of \eqref{higherorderpoincare} holds with the full norm,
\begin{equation}
  \label{higherorderscaledembedding}
  \|\varphi\|_{H^j_\lambda(\Omega)}^2 \leq \frac{c_{j,k}(\Omega)}{\lambda^{k-j}}\|\varphi\|_{H^k_\lambda(\Omega)}^2,
\end{equation}
for any $\varphi\in H^k_\lambda(\Omega)$ and any $j\leq k$, where $c_{j,k}(\Omega)$ is a shape constant.

For divergence-free vector fields, we also consider the space
\[ \Vcal(\Omega) =  \{ \bv \in \Ccal_0^\infty(\Omega); \; \bnabla \cdot \bv = 0 \}.
\]
The completion of the space $\Vcal(\Omega)$ under the norm of $L^2(\Omega)$ is denoted by $H(\Omega)$ and the completion of the space $\Vcal(\Omega)$ under the norm of $H_0^1(\Omega)^3$ is denoted by $V(\Omega)$. The norms and inner products in $H(\Omega)$ and in $V(\Omega)$ are those inherited from $L^2(\Omega)^3$ and $H_0^1(\Omega)^3$, respectively. The Poincar\'e inequality also holds in $V(\Omega)$.




If we denote
\begin{equation}
   \sfD(\bv) = (d_{ij})_{ij} = \frac{1}{2}\left( \frac{\partial v_i}{\partial x_j} + \frac{\partial v_j}{\partial x_i}\right)_{ij}, \quad d_{ij}= d_{ji},
\end{equation} 
then, 
\[ \sum_{ij} d_{ij}^2 = \frac{1}{4} \sum_{ij} \left( \frac{\partial v_i}{\partial x_j} + \frac{\partial v_j}{\partial x_i}\right)^2 = \frac{1}{4} \sum_{ij} \left( \left(\frac{\partial v_i}{\partial x_j}\right)^2 + 2 \frac{\partial v_i}{\partial x_j}\frac{\partial v_j}{\partial x_i} + \left(\frac{\partial v_j}{\partial x_i}\right)^2 \right).
\]
Thus, 
\begin{multline*}
   \|\sfD(\bv)\|_{L^2(\Omega)}^2 = \sum_{ij} \int_\Omega d_{ij}^2\;\rd \bx = \frac{1}{4} \sum_{ij} \int_\Omega\left( \left(\frac{\partial v_i}{\partial x_j}\right)^2 + 2 \frac{\partial v_i}{\partial x_j}\frac{\partial v_j}{\partial x_i} + \left(\frac{\partial v_j}{\partial x_i}\right)^2 \right) \;\rd\bx \\
   =  \frac{1}{2}\sum_{ij} \int_\Omega \left(\frac{\partial v_i}{\partial x_j}\right)^2 \;\rd\bx +  \frac{1}{2}\sum_{ij} \int_\Omega \frac{\partial v_i}{\partial x_j}\frac{\partial v_j}{\partial x_i} \;\rd \bx =  \frac{1}{2}\|\bnabla\bv\|_{L^2(\Omega)}^2 + \frac{1}{2}\sum_{ij} \int_\Omega \frac{\partial v_i}{\partial x_j}\frac{\partial v_j}{\partial x_i} \;\rd \bx.
  \end{multline*} 

If $\bv\in \Dcal(\Omega)^3$, then by two integration by parts and using that $\bv$ vanishes on the boundary we find that
\[ \sum_{ij} \int_\Omega \frac{\partial v_i}{\partial x_j}\frac{\partial v_j}{\partial x_i} \;\rd \bx = \sum_{ij} \int_\Omega \frac{\partial v_i}{\partial x_i}\frac{\partial v_j}{\partial x_j} \;\rd \bx = \int_\Omega \left(\bnabla \cdot \bv\right)^2 \;\rd \bx = \|\bnabla \cdot \bv\|_{L^2(\Omega)}^2.
\]
Thus, by the density of $\Dcal(\Omega)^3$ in $H_0^1(\Omega)^3$, we find that (c.f. Korn's inequality \cite[Theorem 6.3-3]{ciarlet1988})
\begin{equation}
  \label{Dandgradvinhzero1}
  \|\sfD(\bv)\|_{L^2(\Omega)}^2 = \frac{1}{2}\|\bnabla\bv\|_{L^2(\Omega)}^2 + \frac{1}{2}\|\bnabla \cdot \bv\|_{L^2(\Omega)}^2, \qquad \forall \bv\in H_0^1(\Omega)^3.
\end{equation}
In particular,
\begin{equation}
  \label{Dandgradvcoerciveinhzero1}
  \frac{1}{2}\|\bnabla\bv\|_{L^2(\Omega)}^2 \leq \|\sfD(\bv)\|_{L^2(\Omega)}^2 \leq \|\bnabla\bv\|_{L^2(\Omega)}^2, \qquad \forall \bv\in H_0^1(\Omega)^3.
\end{equation}

If, moreover, $\bv\in V(\Omega)$, then $\bnabla \cdot \bv = 0$ and we are left with
\begin{equation}
  \label{Dandgradv}
  \|\sfD(\bv)\|_{L^2(\Omega)}^2 = \frac{1}{2}\|\bnabla\bv\|_{L^2(\Omega)}^2, \qquad \forall \bv\in V(\Omega).
\end{equation}

For the proof of higher-order regularity, we use the finite-difference operators
\begin{equation}
  \label{finitedifference}
  \delta_i^h\bw(\bx) = \frac{\bw(\bx + h\be_i) - \bw(\bx)}{h},
\end{equation}
for $i=1,2,3$ and $h>0$. A useful estimate related to these operators is the following:
  \begin{align} 
    \label{finitedifferenceintestimate}
    \|\delta_i^h \varphi\|_{L^2(\Omega_0)} & \leq \|\partial_{x_i} \varphi\|_{L^2(\Omega)}, \quad \forall \varphi\in H^1(\Omega),
  \end{align}
for $h\neq 0$ sufficiently small such that $|h| < \textrm{dist}(\Omega_0, \partial\Omega)$, for any subdomain $\Omega_0$ compactly included in $\Omega$ (see \cite[Lemma 15.1]{friedman1969}). By extending the function $\varphi$ to zero outside $\Omega$, the extended function $\tilde\varphi$ satisfies
  \begin{align} 
    \label{finitedifferenceintestimate2}
    \|\delta_i^h \tilde\varphi\|_{L^2(\Omega)} & \leq \|\partial_{x_i} \varphi\|_{L^2(\Omega)}, \quad \forall \varphi\in H^1(\Omega),
  \end{align}
for any $h\neq 0$. Of course, these estimates extend to vector fields and tensors. 

In the case $\varphi\in W^{1,\infty}(\Omega)$ (so that $\varphi$ is Lipschitz), then it is not difficult to see that $\delta_i^h\varphi$ is uniformly bounded in the interior, i.e.
\begin{equation}
  \label{finitedifferenceintestimateinfty}
  \|\delta_i^h\varphi\|_{L^\infty(\Omega_0)} \leq \|\partial_{x_i}\varphi\|_{L^\infty(\Omega)},
\end{equation}
for any $h\neq 0$ sufficiently small such that $|h| < \textrm{dist}(\Omega_0, \partial\Omega)$, for any subdomain $\Omega_0$ compactly included in $\Omega$. For the estimates near the boundary, we localize and rectify the domain, so that we may assume that a portion of the boundary is locally flat and given by $\{x_3=0\}$. More precisely, assume there is a ``rectified domain'' $\tilde\Omega$ and a ball $B$ such that $B\cap \tilde\Omega = \{\bx\in B; x_3<0\}$ and extend $\varphi$ to a function $\tilde\varphi$ that vanishes on $B\cap (\RR^3\setminus \tilde\Omega)$. Then, we bound any ``tangential'' finite difference by the corresponding ``tangential derivative'', i.e.
\begin{equation}
  \label{finitedifferencebdryestimateinfty}
  \|\delta_i^h\varphi\|_{L^\infty(B_0)} \leq \|\partial_{x_i}\varphi\|_{L^\infty(B\cap\Omega)},
\end{equation}
for every $i=1,2$ and every $h\neq 0$ such that $|h| < \textrm{dist}(B_0, \partial B)$, where $B_0$ is a ball concentric to $B$ and with strictly smaller radius.

\section{Weak formulations}

If $\bv$ and $p$ are smooth solutions of \eqref{generalizedStokes} and $\bw$ is another smooth vector field vanishing at the boundary, and assuming that $\sfA$ and $\bbf$ are sufficiently smooth, then by multiplying scalar-wise the equation by $\bw$ and integrating that by parts, using the boundary conditions, we arrive at
\[ \int_\Omega (\sfD(\bv)\sfA + \sfA\sfD(\bv)) : \bnabla \bw \;\rd \bx - \int_\Omega p \bnabla \cdot \bw \;\rd \bx = \int_\Omega \bbf \cdot \bw \;\rd \bx.
\]
The above equation still makes sense for less regular vector fields and tensors, which leads us to the following weak formulation for the generalized Stokes problem \eqref{generalizedStokes}.

\begin{prob}[Weak formulation for the generalized Stokes problem with pressure]
  Given a tensor $\sfA\in L^\infty(\Omega)^{3\times 3}$ and a vector field $\bbf\in H^{-1}(\Omega)^3$, find a vector field $\bv\in V(\Omega)$ and a scalar field $p\in L^2(\Omega)$ such that
\begin{equation}
    \label{weakgeneralizedStokeswithpressure}
    \int_\Omega (\sfD(\bv)\sfA + \sfA\sfD(\bv)) : \bnabla \bw \;\rd \bx - \int_\Omega p \bnabla \cdot \bw \;\rd \bx = \int_\Omega \bbf \cdot \bw \;\rd \bx, \qquad \forall \bw\in H_0^1(\Omega)^3.
  \end{equation}  
\end{prob}

If we further restrict the test functions $\bw$ to the space $V(\Omega)$ of divergence-free vector fields, then the pressure term disappears and we arrive at the following weak formulation.

\begin{prob}[Weak formulation for the generalized Stokes problem]
  Given a tensor $\sfA\in L^\infty(\Omega)^{3\times 3}$ and a vector field $\bbf\in H^{-1}(\Omega)^3$, find a vector field $\bv\in V(\Omega)$ such that
  \begin{equation}
    \label{weakgeneralizedStokes}
    \int_\Omega (\sfD(\bv)\sfA + \sfA\sfD(\bv)) : \bnabla \bw \;\rd \bx = \int_\Omega \bbf \cdot \bw \;\rd \bx, \qquad \forall \bw\in V(\Omega).
  \end{equation}
\end{prob}

In the case $\sfA$ is replaced by $\Acal(\sfB)$, we have the following weak formulation for the viscoelastic Stokes problem \eqref{viscoelasticStokes} with the pressure term (see Proposition \ref{proplinftyboundonacalb} to justify the hypotheses on $\sfB$).

\begin{prob}[Weak formulation for the viscoelastic Stokes problem with pressure]
  Given a tensor $\sfB\in L^\infty(\Omega)^{3\times 3}$ with $(\det \sfB)^{-1} \in L^\infty(\Omega)$, coefficients $\mu_1, \mu_2, \mu_3$ in $L^\infty(\Omega)$, and a vector field $\bbf\in H^{-1}(\Omega)^3$, find a vector field $\bv\in V(\Omega)$ and a scalar field $p\in L^2(\Omega)$ such that
  \begin{equation}
    \label{weakviscoelasticStokeswithpressure}
    \int_\Omega (\sfD(\bv)\Acal(\sfB) + \Acal(\sfB)\sfD(\bv)) : \bnabla \bw \;\rd \bx - \int_\Omega p \bnabla \cdot \bw \;\rd \bx  = \int_\Omega \bbf \cdot \bw \;\rd \bx, \qquad \forall \bw\in H_0^1(\Omega)^3.
  \end{equation}
\end{prob}

Similarly, we also have the following weak formulation in $V(\Omega)$.

\begin{prob}[Weak formulation for the viscoelastic Stokes problem]
  Given a tensor $\sfB\in L^\infty(\Omega)^{3\times 3}$ with $(\det \sfB)^{-1} \in L^\infty(\Omega)$ and given a vector field $\bbf\in H^{-1}(\Omega)^3$, find a vector field $\bv\in V(\Omega)$ such that
  \begin{equation}
    \label{weakviscoelasticStokes}
    \int_\Omega (\sfD(\bv)\Acal(\sfB) + \Acal(\sfB)\sfD(\bv)) : \bnabla \bw \;\rd \bx = \int_\Omega \bbf \cdot \bw \;\rd \bx, \qquad \forall \bw\in V(\Omega).
  \end{equation}
\end{prob}

Of course, if $\sfB = \sfF\sfF^\tr$, where $\sfF$ is the deformation tensor, then $\det \sfB=1$ thanks to the incompressibility condition. Nevertheless, the previous problems are stated for a general tensor $\sfB$.

In order to characterize the problems above as elliptic problems and to obtain suitable existence, uniqueness and continuous dependence on the data, an important condition is that of uniform ellipticity. In that regard, we make the following definition, suitable in our context.
\begin{defs}
  \label{Stokesuniformlyelliptic}
  We say that, for a given tensor $\sfA$, the generalized Stokes problem \eqref{weakgeneralizedStokes} is \textbf{uniformly elliptic} in $V(\Omega)$, or, more generally, in $H_0^1(\Omega)^3$, if there exist $\delta' \geq \delta >0$ such that
  \begin{multline}
    \label{probgenStokesunifellip}
    \delta \int_\Omega |\bnabla \bv|^2\;\rd\bx \leq \int_\Omega (\sfD(\bv)\sfA + \sfA\sfD(\bv)) : \bnabla \bv \;\rd \bx \\ \leq \delta' \int_\Omega |\bnabla \bv|^2\;\rd\bx, \quad \forall \bv \in V(\Omega), \text{ or, respectively, } \forall \bv \in H_0^1(\Omega)^3.
  \end{multline}  
\end{defs}
\medskip

As we will see in the Section \ref{secgenstokes}, the key condition to show the uniform ellipticity of the generalized Stokes problem is that the tensor $\sfA$ be essentially bounded and uniformly positive definite (see Proposition \ref{probgenStokesisunifelliptic}). With that purpose in mind, we make the following definition.
\begin{defs}
  \label{defuniformpositivedef}
  We say that a symmetric tensor $\sfA$ on $\Omega$ is \textbf{uniformly positive definite} if there exists 
\begin{equation}
  \label{alpha}
  \alpha >0
\end{equation}
such that, for almost every $\bx\in \Omega$,
\begin{equation}
  \label{unifellip}
  \sfA(\bx)\bxi \cdot \bxi \geq \alpha |\bxi|^2, \qquad \forall \bxi \in \RR^3.
\end{equation}
\end{defs}
\medskip

Notice that, if \eqref{unifellip} holds, then, by taking $\bxi$ to be one of the vectors of the canonical basis, we find that $\alpha$ is bounded by every element of the diagonal of the tensor $\sfA$. In particular,
\begin{equation}
  \label{alphaboundedLinfty}
  \alpha \leq \|\sfA\|_{L^\infty}.
\end{equation}

Similarly, concerning the viscoelastic Stokes problem, we have the following definition of uniform ellipticity:
\begin{defs}
  \label{viscoelasticuniformlyelliptic}
  We say that, for a given tensor $\sfB$, the viscoelastic Stokes problem \eqref{weakviscoelasticStokes} is \textbf{uniformly elliptic} in $V(\Omega)$, or, more generally, in $H_0^1(\Omega)^3$, if there exist $\delta'\geq \delta>0$ such that
  \begin{multline}
    \label{probviscoelasticStokesunifellip}
    \delta \int_\Omega |\bnabla \bv|^2\;\rd\bx \leq \int_\Omega (\sfD(\bv)\Acal(\sfB) + \Acal(\sfB)\sfD(\bv)) : \bnabla \bv \;\rd \bx \\
    \leq \delta' \int_\Omega |\bnabla \bv|^2\;\rd\bx, \quad \forall \bv \in V(\Omega), \text{ or, respectively, } \forall \bv \in H_0^1(\Omega)^3.
  \end{multline}  
\end{defs}
\medskip

The condition on $\sfB$ that ensures the uniform ellipticity of the viscoelastic Stokes problem is investigated in Section \ref{conditionsellipticity}. For the moment, we only make the following definition, for the sake of notational simplification.

\begin{defs}
  \label{defproper}
  A positive definite symmetric tensor $\sfB=\sfB(\bx)$ is called $\Acal$-\textbf{positive} when $\sfA=\Acal(\sfB)$ is a uniformly positive definite symmetric tensor in the sense of Definition \ref{defuniformpositivedef}.
\end{defs}
\medskip

Hence, it will follow from Propositions \ref{probgenStokesisunifelliptic} and \ref{proplinftyboundonacalb} that every $\Acal$-positive tensor $\sfB$ in $L^\infty(\Omega)^{3\times 3}$ yields a uniformly elliptic viscoelastic Stokes problem, and the question then is to characterize the tensors $\sfB$ which are $\Acal$-positive. As we just mentioned, this is done in Section \ref{conditionsellipticity}.

\begin{rmk}[Entropy condition]
The ellipticity conditions in Definitions \ref{viscoelasticuniformlyelliptic} and \ref{defproper} are closely related to the entropy principle of the second law of thermodynamics (see \cite{Liu2008}), which concerns the viscous part of the stress tensor and, in our model, reads
\[ (\sfD\Acal(\sfB) + \Acal(\sfB)\sfD) \cdot \sfD \geq 0.
\]
The only difference is that the condition of uniform ellipticity requires a strict inequality, uniformly on the material.
\end{rmk}

Once we find a weak solution, we may recover the pressure using the following classical result (see \cite[Proposition 1.1.1]{temam}).

\begin{prop}
  \label{proprecoverpressure}
  Let $\Omega\subset \RR^d$ be open and $\bg\in \Dcal'(\Omega)^d$, $d\in \NN$. A necessary and sufficient condition for having $\bg = \bnabla p$, in the distribution sense, for some $p\in \Dcal'(\Omega)$ is that 
  \[ \dual{\bg,\bv}_{\Dcal'(\Omega)^d,\Dcal(\Omega)^d} = 0, \quad \forall \bv\in \Vcal(\Omega).
  \]
\end{prop}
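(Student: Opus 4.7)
The sufficiency is immediate. If $\bg = \bnabla p$ in $\Dcal'(\Omega)$, then for every $\bv \in \Vcal(\Omega)$, which is smooth, compactly supported and divergence-free, the definition of the distributional gradient gives
\[
\dual{\bg,\bv}_{\Dcal'(\Omega)^d,\Dcal(\Omega)^d} = \dual{\bnabla p,\bv} = -\dual{p,\bnabla\cdot\bv} = 0.
\]

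For the converse, my plan is a two-stage argument. In the first stage I extract the closedness condition. Given any $\phi \in \Dcal(\Omega)$ and any pair of distinct indices $i,j\in\{1,\ldots,d\}$, the field
\[
\bv_{ij,\phi} = (\partial_{x_j}\phi)\be_i - (\partial_{x_i}\phi)\be_j
\]
lies in $\Ccal_0^\infty(\Omega)^d$ and has vanishing divergence by the equality of mixed partials, hence $\bv_{ij,\phi}\in \Vcal(\Omega)$. The hypothesis applied to $\bv_{ij,\phi}$ reads
\[
0 = \dual{\bg,\bv_{ij,\phi}} = \dual{g_i,\partial_{x_j}\phi} - \dual{g_j,\partial_{x_i}\phi} = -\dual{\partial_{x_j}g_i - \partial_{x_i}g_j,\phi},
\]
so $\partial_{x_j}g_i = \partial_{x_i}g_j$ in $\Dcal'(\Omega)$ for all $i,j$.

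In the second stage I convert this closedness into exactness. On any open ball $B\subset\Omega$ the distributional Poincaré lemma produces a primitive $p_B\in\Dcal'(B)$ with $\bg|_B = \bnabla p_B$; the standard route is to regularize $\bg$ by convolution with a mollifier on slightly smaller balls, apply the classical homotopy formula to the resulting smooth closed 1-form, and pass to the distributional limit using the uniform bounds afforded by $\bg\in \Dcal'(\Omega)^d$. Then I cover $\Omega$ by such balls and glue: on each overlap $B\cap B'$, the difference $p_B - p_{B'}$ is a distribution whose gradient vanishes, hence is locally constant on the connected components of $B\cap B'$.

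The main obstacle is the global gluing, because on a general (possibly multiply connected) open set $\Omega$ local primitives of a merely closed form can have non-trivial periods, obstructing the existence of a global $p$. This is precisely where the full strength of the hypothesis is needed, as opposed to the weaker closedness derived in stage one: any period of $\bg$ along a cycle in $\Omega$ can be realized as a pairing $\dual{\bg,\bv}$ against a compactly supported divergence-free field $\bv$ concentrated in a tubular neighborhood of that cycle, and by hypothesis such pairings vanish. Consequently the overlap constants are consistent and can be simultaneously adjusted so that the $p_B$'s patch into a single $p\in\Dcal'(\Omega)$ with $\bg = \bnabla p$. This is the content of the classical de Rham theorem for distributions, and I would close the proof by referring to \cite{temam} for the detailed verification of the gluing step.
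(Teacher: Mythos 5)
Your proposal is correct and matches the paper's treatment: the paper offers no proof of this classical result, simply citing Temam (Proposition 1.1.1), which in turn rests on de Rham's theorem for currents, and your sketch follows exactly that standard route (sufficiency by the definition of the distributional gradient; necessity via closedness, the local distributional Poincar\'e lemma, and the period-killing/gluing argument) before deferring the gluing step to the same reference. There is nothing to flag beyond what you already acknowledge as cited material.
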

\medskip

The regularity of $p$ can be inferred from the regularity of $\bg=\bnabla p$ as follows (see \cite[Proposition 1.1.2]{temam} and \cite{necas}).

\begin{prop}
  \label{regpressurel2}
  Let $\Omega\subset\RR^d$ be an open bounded Lipschitz domain, with $d\in \NN$. Suppose $p\in \Dcal(\Omega)'$ is a distribution with $\bnabla p \in H^{-1}(\Omega)^3$. Then, $p\in L^2(\Omega)$ and
  \begin{equation}
    \label{pressureestl2}
    \|p\|_{L^2(\Omega)/\RR} \leq c(\Omega) \|\bnabla p\|_{H^{-1}(\Omega)},
  \end{equation}
  where $c(\Omega)$ is a shape constant.
\end{prop}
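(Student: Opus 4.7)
The plan is to establish the Ne\v{c}as estimate \eqref{pressureestl2} through the duality between gradient and divergence, relying crucially on the solvability of the divergence equation on bounded Lipschitz domains (Bogovski's theorem): for every $q\in L^2(\Omega)$ with $\int_\Omega q\,\rd\bx = 0$, there exists $\bw\in H_0^1(\Omega)^d$ with $\bnabla\cdot\bw = q$ and $\|\bw\|_{H_0^1(\Omega)} \leq c(\Omega)\|q\|_{L^2(\Omega)}$, where $c(\Omega)$ is a shape constant. This result is valid because $\Omega$ is Lipschitz (it is proved classically by decomposing $\Omega$ via a partition of unity into finitely many star-shaped pieces, solving the divergence equation explicitly on each through the Bogovski integral formula, and patching).

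First I would dispose of the case where it is \emph{a priori} known that $p\in L^2(\Omega)$. Replacing $p$ by $p - |\Omega|^{-1}\int_\Omega p$ does not change $\bnabla p$, so I may assume $p$ has zero mean. Let $\bw\in H_0^1(\Omega)^d$ be the Bogovski solution of $\bnabla\cdot\bw = p$. Then, integrating by parts,
\begin{equation*}
\|p\|_{L^2(\Omega)}^2 = \int_\Omega p\,\bnabla\cdot\bw\,\rd\bx = -\dual{\bnabla p,\bw}_{H^{-1}(\Omega),H_0^1(\Omega)} \leq \|\bnabla p\|_{H^{-1}(\Omega)}\|\bw\|_{H_0^1(\Omega)} \leq c(\Omega)\|\bnabla p\|_{H^{-1}(\Omega)}\|p\|_{L^2(\Omega)},
\end{equation*}
and dividing yields \eqref{pressureestl2}.

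The substantive part is to upgrade an arbitrary distribution $p\in\Dcal(\Omega)'$ with $\bnabla p\in H^{-1}(\Omega)^d$ to a bona fide $L^2$ function. I would use a functional-analytic duality route: the linear map $T\colon L^2(\Omega)/\RR \to H^{-1}(\Omega)^d$, $T\pi = \bnabla\pi$, is bounded, injective (if $\bnabla\pi=0$ then $\pi$ is constant and vanishes in the quotient), and by the first step has a closed range with continuous left inverse of norm at most $c(\Omega)$. By the closed range theorem the range of $T$ coincides with the annihilator in $H^{-1}(\Omega)^d$ of $\ker T^*$; computing the adjoint gives $T^*\bw = -[\bnabla\cdot\bw]$ (as a functional on $L^2(\Omega)/\RR$), whose kernel is exactly $V(\Omega)$. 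Thus the range of $T$ consists of those $\bg\in H^{-1}(\Omega)^d$ that annihilate $V(\Omega)$. Any $\bg=\bnabla p$ for a distribution $p$ automatically vanishes on $\Vcal(\Omega)$ by integration by parts, hence by density on $V(\Omega)$, so $\bg$ lies in the range of $T$; equivalently $p\in L^2(\Omega)/\RR$, and \eqref{pressureestl2} follows from the continuous invertibility of $T$ on its range (or directly from the first step applied to the so-obtained $L^2$ representative).

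The main obstacle is Bogovski's construction on a bounded Lipschitz domain: this is a nontrivial classical result and is the only ingredient that truly uses the Lipschitz regularity of $\partial\Omega$. Once it is in hand, the proof is a standard duality and closed-range argument. An alternative to the closed-range reasoning is a direct mollification argument—regularize $p$ by convolution with a smooth kernel on interior subdomains, apply the already-proven estimate to $p_\varepsilon$ there, and extend up to the boundary by localizing and flattening $\partial\Omega$ via the Lipschitz atlas—but the duality route is cleaner and parallels the treatment in Temam and Ne\v{c}as that is cited.
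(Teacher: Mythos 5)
The paper does not actually prove this proposition: it is quoted from Temam [Prop.\ 1.1.2] and Ne\v{c}as, where it is established by localizing, flattening the boundary with the Lipschitz atlas, and combining the inequality $\|p\|_{L^2}\leq c\left(\|p\|_{H^{-1}}+\|\bnabla p\|_{H^{-1}}\right)$ with a compactness (Peetre--Tartar) argument to drop the lower-order term. Your route --- deriving the estimate by duality from a bounded right inverse of the divergence on $H_0^1(\Omega)^d$ (Bogovski), then upgrading a general distribution to $L^2$ via the closed range theorem --- is a standard, correct alternative; the two key lemmas are in fact known to be equivalent, so you have not made the result more elementary, only shifted all of the analytic weight onto Bogovski's construction, which you acknowledge. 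The computation in your first step and the identification of $T^*\bw$ with the functional $\pi\mapsto -\int_\Omega \pi\,\bnabla\cdot\bw\,\rd\bx$ are both correct.

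One step needs more care. The closed-range argument requires $\bg=\bnabla p$ to annihilate all of $\ker T^*=\{\bw\in H_0^1(\Omega)^d:\ \bnabla\cdot\bw=0\}$, and you get there by asserting that this kernel equals $V(\Omega)=\overline{\Vcal(\Omega)}$ and then passing from $\Vcal(\Omega)$ to its closure. That identity is true on bounded Lipschitz domains, but it is itself nontrivial, and its usual textbook proof (Temam, Thm.\ I.1.6) is a corollary of the very proposition you are proving, so quoting it as known would be circular. The gap is repairable within your own framework: the density of $\Vcal(\Omega)$ in the divergence-free part of $H_0^1(\Omega)^d$ can be proved directly from the Bogovski operator (localize by a partition of unity subordinate to a covering by star-shaped pieces and use Bogovski to correct the divergence created by the cutoffs), so the argument closes --- but this should be made explicit rather than assumed.
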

\medskip

Once $\bnabla p$ is more regular, higher regularity for $p$ itself follows directly. In fact, if $\bnabla p$ belongs to $L^2(\Omega)^3$, then clearly $p\in H^1(\Omega)$, and so on for higher regularities of $\bnabla p$.

\section{The generalized Stokes problem}
\label{secgenstokes}

In this section, we study problem \eqref{generalizedStokes} for a given tensor $\sfA$ in $L^\infty(\Omega)^{3\times 3}$, which is assumed to be symmetric and uniformly positive definite in the sense of Definition \ref{defuniformpositivedef}. As we will see later on (Section \ref{subsecunifelliptic}), this ensures that the generalized Stokes problem \eqref{weakgeneralizedStokes} is uniformly elliptic in the sense of Definition \ref{Stokesuniformlyelliptic}. We then use this to obtain the existence and uniqueness of the solutions of this problem (Section \ref{subsecexistuniqgenStokes}) and higher-order regularity (Sections \ref{subsecreggenStokes} and \ref{subsechighreggenStokes}).

\subsection{Uniform ellipticity of the generalized Stokes problem}
\label{subsecunifelliptic}

We start with the following useful lemma.
\begin{lem}
  \label{lempositivityofL}
  Let $\sfA\in L^\infty(\Omega)^{3\times 3}$ be a symmetric tensor on $\Omega$ which is uniformly positive definite and let $\alpha>0$ be the corresponding constant such that \eqref{unifellip} holds. For $\sfM\in L^\infty(\Omega)^{3\times 3}$, consider the operator
  \begin{equation}
    \label{defopL}
    \sfL(\sfM) = \sfS(\sfM)\sfA + \sfA\sfS(\sfM),
  \end{equation} 
  where $\sfS$ is the symmetrization operator
  \begin{equation}
    \label{defopS}
    \sfS(\sfM) = \frac{1}{2}(\sfM + \sfM^\tr).
  \end{equation}
  Then, $\sfL$ satisfies
  \begin{equation}
    \label{symposopL}
    \sfL(\sfM): \sfM \geq 2\alpha \sfS(\sfM):\sfS(\sfM), \quad \forall \sfM\in L^\infty(\Omega)^{3\times 3},
  \end{equation}
  everywhere on $\Omega$. 
\end{lem}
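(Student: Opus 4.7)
The plan is to reduce the inequality to an elementary spectral estimate applied to the symmetric part of $\sfM$, using that the antisymmetric part cannot contribute.

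First I would note that $\sfL(\sfM)$ is automatically symmetric: since $\sfA$ and $\sfS(\sfM)$ are symmetric, $(\sfS(\sfM)\sfA + \sfA\sfS(\sfM))^\tr = \sfA\sfS(\sfM) + \sfS(\sfM)\sfA = \sfL(\sfM)$. Writing $\sfM = \sfS(\sfM) + \sfK(\sfM)$ with $\sfK(\sfM) = (\sfM - \sfM^\tr)/2$ antisymmetric, the contraction of a symmetric tensor with an antisymmetric one vanishes, hence
\begin{equation*}
   \sfL(\sfM) : \sfM = \sfL(\sfM) : \sfS(\sfM) = (\sfS(\sfM)\sfA + \sfA\sfS(\sfM)) : \sfS(\sfM).
\end{equation*}
So it suffices to prove, at (almost) every point, that for any symmetric $3\times 3$ matrix $\sfD$,
\begin{equation*}
   (\sfD\sfA + \sfA\sfD) : \sfD \geq 2\alpha\, \sfD : \sfD.
\end{equation*}

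Next I would rewrite the left-hand side as a trace. In index notation, using the symmetry $D_{ik}=D_{ki}$ and $A_{kj}=A_{jk}$, a direct relabelling gives $(\sfD\sfA):\sfD = A_{kj}D_{ik}D_{ij} = \sfA:\sfD^2 = \tr(\sfA\sfD^2)$, and similarly $(\sfA\sfD):\sfD = \tr(\sfA\sfD^2)$. By cyclicity of the trace,
\begin{equation*}
   (\sfD\sfA + \sfA\sfD):\sfD = 2\tr(\sfA\sfD^2) = 2\tr(\sfD\sfA\sfD).
\end{equation*}

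Finally I would evaluate this trace on the canonical basis $\{\be_1,\be_2,\be_3\}$ and invoke the uniform positivity of $\sfA$:
\begin{equation*}
   \tr(\sfD\sfA\sfD) = \sum_{i=1}^3 (\sfD\sfA\sfD\,\be_i)\cdot \be_i = \sum_{i=1}^3 \sfA(\sfD\be_i)\cdot(\sfD\be_i) \geq \alpha \sum_{i=1}^3 |\sfD\be_i|^2 = \alpha\,\sfD:\sfD,
\end{equation*}
where the middle equality uses symmetry of $\sfD$. Combining these three observations yields $\sfL(\sfM):\sfM \geq 2\alpha\,\sfS(\sfM):\sfS(\sfM)$ pointwise on $\Omega$, as required.

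There is no real obstacle here; the only point requiring care is the bookkeeping of transposes to confirm that $\sfL(\sfM)$ is symmetric (so that the antisymmetric part of $\sfM$ drops out) and to convert $(\sfD\sfA+\sfA\sfD):\sfD$ into the manifestly nonnegative quadratic form $2\sum_i \sfA(\sfD\be_i)\cdot(\sfD\be_i)$.
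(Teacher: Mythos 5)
Your proof is correct and follows essentially the same route as the paper: reduce $\sfL(\sfM):\sfM$ to $\sfL(\sfM):\sfS(\sfM)$ using the symmetry of $\sfL(\sfM)$, then apply the pointwise positivity of $\sfA$ to the rows (equivalently, columns) of the symmetric part. Your trace identity $(\sfD\sfA+\sfA\sfD):\sfD = 2\tr(\sfD\sfA\sfD) = 2\sum_i \sfA(\sfD\be_i)\cdot(\sfD\be_i)$ is just a cleaner, coordinate-free packaging of the index computation $2\sum_{ijl}a_{lj}s_{il}s_{ij}$ carried out in the paper.
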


\begin{proof}
  Since $\sfA$ and $\sfS(\sfM)$ are symmetric, so is $\sfL(\sfM)$. Thus, omitting $\bx\in \Omega$ for the sake of simplicity,
  \[ \sfL(\sfM) : \sfM = \frac{1}{2}\sfL(\sfM):\sfM + \frac{1}{2}\sfL(\sfM)^\tr: \sfM = \frac{1}{2}\sfL(\sfM):\sfM + \frac{1}{2}\sfL(\sfM): \sfM^\tr = \sfL(\sfM): \sfS(\sfM).
  \]
  Write $\sfS(\sfM) = (s_{ij})_{ij}$ and $\sfA=(a_{ij})_{ij}$. Then,
\[ \sfL(\sfM) = \sfS(\sfM)\sfA+\sfA\sfS(\sfM) = \left( \sum_l (s_{il}a_{lj} + a_{il}s_{lj})\right)_{ij}.
\]
Using again that $\sfA$ and $\sfS(\sfM)$ are symmetric,
\begin{multline*}
  \sfL(\sfM): \sfM = \sfL(\sfM):\sfS(\sfM) = \sum_{ijl} (s_{il}a_{lj} + a_{il}s_{lj}) s_{ij} \\
  = \sum_{ijl} (a_{lj}s_{il}s_{ij} + a_{li} s_{lj} s_{ij}) = \sum_{ijl} (a_{lj}s_{il}s_{ij} + a_{li} s_{jl} s_{ji}).
\end{multline*}
Switching $i$ with $j$ in the second term in the right hand side above and using the uniform strict positivity of $\sfA$ we find that
\[ \sfL(\sfM): \sfM = 2 \sum_{ijl} a_{lj}s_{il}s_{ij} \geq 2\alpha \sum_{ij} s_{ij}^2 = 2\alpha \sfS(\sfM): \sfS(\sfM),
\]
which completes the proof.
\end{proof}
\medskip

\begin{rmk}
  \label{rmkopLnotpositive}
  Lemma \ref{lempositivityofL} says that $\sfL$ is uniformly strictly positive on the space of essentially bounded symmetric tensors, which we may denote by $L_{\textrm{sym}}^\infty(\Omega)^{3\times 3}$. Indeed, from \eqref{symposopL} and using that $\sfS(\sfM) = \sfM$ for a symmetric tensor, we have
  \[ \sfL(\sfM): \sfM  \geq 2\alpha \sfM: \sfM, \quad \forall \sfM\in L_{\textrm{sym}}^\infty(\Omega)^{3\times 3}.
  \]
  However, $\sfL$ is not strictly positive on all $L^\infty(\Omega)^{3\times 3}$. Indeed, for any $\sfM$ anti-symmetric, we have $\sfS(\sfM) = 0$, so that $\sfL(\sfM) = 0$. In this work, we are actually interested in the case that $\sfM = \bnabla \bv$, which can be written as $\sfM = (\bnabla \otimes \bv)^\tr$. If we extrapolate on this idea and look at tensors of the form $\sfM=\bxi\otimes\bfeta=(\xi_i\eta_j)_{ij},$ for vectors $\bxi$ and $\bfeta$, then we notice that
  \[ \sfS(\sfM) = \sfS(\bxi\otimes\bfeta) = \frac{1}{2}(\bxi\otimes\bfeta + \bfeta\otimes\bxi),
  \]
  and
  \[ \sfS(\sfM):\sfS(\sfM) = \frac{1}{4}(\bxi\otimes\bfeta + \bfeta\otimes\bxi): (\bxi\otimes\bfeta + \bfeta\otimes\bxi)
 = \frac{1}{2}\left(|\bxi|^2|\bfeta|^2 + (\bxi\cdot\bfeta)^2 \right) = \frac{1}{2}\sfM:\sfM + \frac{1}{2}(\tr\sfM)^2.
  \]
  This expression is directly related to the identity \eqref{Dandgradvinhzero1}. In particular,
  \[  \sfS(\sfM):\sfS(\sfM)  \geq \frac{1}{2}\sfM:\sfM.
  \]
  Thus,
  \[ \sfL(\sfM): \sfM \geq \alpha \sfM:\sfM,
  \]
  for any $\sfM=\bxi\otimes\bfeta$, so that $\sfL$ is uniformly strictly positive on the space of tensors of this form.
\end{rmk}

The motivation for considering the operators $\sfL$ and $\sfS$ above follows from the fact that, by taking $\sfM=\bnabla \bv$, we see that
\begin{equation}
  \label{DvasSnablav} 
  \sfD(\bv) = \sfS(\bnabla\bv),
\end{equation}
and
\begin{equation}
  \label{linearpartasL}
  \sfD(\bv)\sfA + \sfA\sfD(\bv) = \sfL(\bnabla \bv),
\end{equation}
so that the generalized Stokes problem \eqref{generalizedStokes} can be written as
\begin{equation}
  \label{generalizedStokeswithL}
   \begin{cases}
      - \bnabla \cdot (\sfL(\bnabla \bv)) + \bnabla p = \bbf, & \text{in } \Omega,\\
      \bnabla \cdot \bv = 0, & \text{in } \Omega,\\
      \bv = 0, & \text{on } \partial \Omega.
    \end{cases}
\end{equation}
and similarly for the elliptic problem \eqref{weakgeneralizedStokes}. With that in mind, we obtain, using Lemma \ref{lempositivityofL}, the uniform ellipticity of the Problem \eqref{weakgeneralizedStokes}.

\begin{prop}
  \label{probgenStokesisunifelliptic}
  If $\sfA\in L^\infty(\Omega)^{3\times 3}$ is a symmetric tensor on $\Omega$ which is uniformly positive definite then the associated problem \eqref{weakgeneralizedStokes} is uniformly elliptic in $H_0^1(\Omega)^3$. More precisely, if \eqref{unifellip} holds for $\sfA$ with $\alpha>0$, then \eqref{probgenStokesunifellip} holds with $\delta=\alpha$ in either $V(\Omega)$ and $H_0^1(\Omega)^3$ and with $\delta' = \sqrt{2}\|\sfA\|_{L^\infty(\Omega)}$, in $V(\Omega)$, and $\delta' = 2\|\sfA\|_{L^\infty(\Omega)}$, in $H_0^1(\Omega)^3$.
\end{prop}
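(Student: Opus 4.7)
The plan is to read the bilinear form as an integral of $\sfL(\bnabla\bv):\bnabla\bv$, where $\sfL$ is the operator from Lemma \ref{lempositivityofL}, and then split into a coercivity estimate (lower bound) obtained directly from Lemma \ref{lempositivityofL} together with the Korn-type identities \eqref{Dandgradvcoerciveinhzero1}--\eqref{Dandgradv}, and a continuity estimate (upper bound) obtained from a pointwise Cauchy--Schwarz bound followed by the integral Cauchy--Schwarz.

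For the lower bound, I would take $\sfM = \bnabla \bv$, so that $\sfS(\sfM) = \sfD(\bv)$ and $\sfL(\sfM) = \sfD(\bv)\sfA + \sfA\sfD(\bv)$ thanks to \eqref{DvasSnablav}--\eqref{linearpartasL}. Lemma \ref{lempositivityofL} applied pointwise gives
\[
 (\sfD(\bv)\sfA + \sfA\sfD(\bv)):\bnabla\bv \;\geq\; 2\alpha\, \sfD(\bv):\sfD(\bv) \;=\; 2\alpha\,|\sfD(\bv)|^2
\]
a.e. in $\Omega$. Integrating and using \eqref{Dandgradvcoerciveinhzero1} in $H_0^1(\Omega)^3$, namely $\|\sfD(\bv)\|_{L^2}^2 \geq \tfrac{1}{2}\|\bnabla\bv\|_{L^2}^2$, yields the desired lower bound with $\delta = \alpha$ on all of $H_0^1(\Omega)^3$ (and a fortiori on $V(\Omega)$, where \eqref{Dandgradv} holds as an equality).

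For the upper bound, I would estimate pointwise by Cauchy--Schwarz on each summand,
\[
 (\sfD(\bv)\sfA + \sfA\sfD(\bv)):\bnabla\bv \;\leq\; |\sfD(\bv)\sfA|\,|\bnabla\bv| + |\sfA\sfD(\bv)|\,|\bnabla\bv| \;\leq\; 2\,|\sfA|\,|\sfD(\bv)|\,|\bnabla\bv|,
\]
using the submultiplicativity of the Frobenius norm. Integrating and applying the integral Cauchy--Schwarz,
\[
 \int_\Omega (\sfD(\bv)\sfA + \sfA\sfD(\bv)):\bnabla\bv\,\rd\bx \;\leq\; 2\,\|\sfA\|_{L^\infty(\Omega)}\,\|\sfD(\bv)\|_{L^2(\Omega)}\,\|\bnabla\bv\|_{L^2(\Omega)}.
\]
On $V(\Omega)$, \eqref{Dandgradv} gives $\|\sfD(\bv)\|_{L^2} = \tfrac{1}{\sqrt 2}\|\bnabla\bv\|_{L^2}$, producing the factor $\sqrt 2\,\|\sfA\|_{L^\infty(\Omega)}$; on $H_0^1(\Omega)^3$, \eqref{Dandgradvcoerciveinhzero1} only gives $\|\sfD(\bv)\|_{L^2} \leq \|\bnabla\bv\|_{L^2}$, producing the factor $2\,\|\sfA\|_{L^\infty(\Omega)}$.

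There is no real obstacle here: Lemma \ref{lempositivityofL} is the engine that handles the lower bound, and the upper bound is just Cauchy--Schwarz twice. The only bookkeeping point is to apply the correct Korn-type identity in each of the two spaces $V(\Omega)$ and $H_0^1(\Omega)^3$ so that the constants $\delta = \alpha$ and $\delta' \in \{\sqrt{2}\|\sfA\|_{L^\infty},\, 2\|\sfA\|_{L^\infty}\}$ come out as stated.
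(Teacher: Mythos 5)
Your proposal is correct and follows essentially the same route as the paper: the lower bound comes from Lemma \ref{lempositivityofL} applied to $\sfM=\bnabla\bv$ together with the lower inequality in \eqref{Dandgradvcoerciveinhzero1}, and the upper bound from Cauchy--Schwarz/H\"older giving $2\|\sfA\|_{L^\infty(\Omega)}\|\sfD(\bv)\|_{L^2(\Omega)}\|\bnabla\bv\|_{L^2(\Omega)}$, followed by \eqref{Dandgradvcoerciveinhzero1} in $H_0^1(\Omega)^3$ and \eqref{Dandgradv} in $V(\Omega)$. The constants $\delta=\alpha$, $\delta'=2\|\sfA\|_{L^\infty(\Omega)}$ and $\delta'=\sqrt{2}\|\sfA\|_{L^\infty(\Omega)}$ come out exactly as in the paper.
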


\begin{proof}
   Taking \eqref{linearpartasL} and \eqref{DvasSnablav} into consideration and applying Lemma \ref{lempositivityofL} we find from \eqref{symposopL} that
  \[ (\sfD(\bv) \sfA + \sfA\sfD(\bv)) : \bnabla \bv = \sfL(\bnabla\bv): \bnabla\bv \geq 2\alpha \sfS(\bnabla\bv) : \sfS(\bnabla \bv) = 2\alpha \sfD(\bv): \sfD(\bv).
  \]
  Integrating over $\Omega$ and using the lower bound in \eqref{Dandgradvcoerciveinhzero1} we find that
  \[ \int_\Omega (\sfD(\bv)\sfA + \sfA\sfD(\bv)) : \bnabla \bv \;\rd \bx \geq  2\alpha \|\sfD(\bv)\|_{L^2(\Omega)}^2 \geq \alpha \|\bnabla \bv\|_{L^2(\Omega)}^2,
  \]
for every $\bv$ in $H_0^1(\Omega)^3$. This proves the lower bound in \eqref{probgenStokesunifellip} with $\delta = \alpha$, in $H_0^1(\Omega)^3$, and, in particular, in $V(\Omega)$.

For the upper bound in \eqref{probgenStokesunifellip}, we use H\"older's inequality and the assumption that $\sfA\in L^\infty(\Omega)^{3\times 3}$ to find that
\[ \int_\Omega (\sfD(\bv)\sfA + \sfA\sfD(\bv)) : \bnabla \bv \;\rd \bx 
  \leq 2\|\sfA\|_{L^\infty(\Omega)}\|\sfD(\bv)\|_{L^2(\Omega)}\|\bnabla \bv\|_{L^2(\Omega)},
\]
for all $\bv\in H_0^1(\Omega)^3$. Then, using the upper bound in \eqref{Dandgradvcoerciveinhzero1} for $\sfD(\bv)$ yields the upper bound in \eqref{probgenStokesunifellip}, in $H_0^1(\Omega)^3$, with $\delta'=2\|\sfA\|_{L^\infty(\Omega)}$, while the upper bound in \eqref{Dandgradv} for $\sfD(\bv)$ yields the upper bound in \eqref{probgenStokesunifellip}, in $V(\Omega)$, with $\delta'=\sqrt{2}\|\sfA\|_{L^\infty(\Omega)}$.
\end{proof}
\medskip

\subsection{Existence and uniqueness of solutions of the weak form of the generalized Stokes problem}
\label{subsecexistuniqgenStokes}

Thanks to the ellipticity property proved in Proposition \ref{probgenStokesisunifelliptic}, an application of the Lax-Milgram theorem solves the question of existence and uniqueness of the weak formulation of the generalized Stokes problem.

\begin{prop}
  \label{weaksolutionweakgeneralizedStokes}
  Let $\Omega\subset \RR^3$ be a bounded domain, let $\sfA\in L^\infty(\Omega)^{3\times 3}$ be a uniformly positive definite symmetric tensor on $\Omega$ and let $\bbf\in H^{-1}(\Omega)^3$. Then, there exists a unique $\bv\in V(\Omega)$ that solves the weak formulation \eqref{weakgeneralizedStokes} of the generalized Stokes problem. Moreover, if $\alpha>0$ is such that \eqref{unifellip} holds, then
  \begin{equation}
    \label{weaksolutionweakgeneralizedStokesestimate}
    \|\bnabla \bv\|_{L^2(\Omega)} \leq \frac{1}{\alpha} \|\bbf\|_{H^{-1}(\Omega)}.
  \end{equation}
\end{prop}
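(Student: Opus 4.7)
The plan is to apply the Lax-Milgram theorem on the Hilbert space $V(\Omega)$, equipped with the inner product inherited from $H_0^1(\Omega)^3$, namely $(\bu,\bv) \mapsto \int_\Omega \bnabla \bu : \bnabla \bv\,\rd\bx$ (whose associated norm is $\|\bnabla\bv\|_{L^2(\Omega)}$ by \eqref{norminH01}). Define the bilinear form
\[
a(\bv,\bw) = \int_\Omega (\sfD(\bv)\sfA + \sfA\sfD(\bv)) : \bnabla \bw \;\rd\bx,\qquad \bv,\bw\in V(\Omega),
\]
and the linear functional $\ell(\bw) = \int_\Omega \bbf\cdot \bw\,\rd\bx$ on $V(\Omega)$, which is well-defined and continuous because $\bbf\in H^{-1}(\Omega)^3$ and $V(\Omega)\hookrightarrow H_0^1(\Omega)^3$, with $|\ell(\bw)|\leq \|\bbf\|_{H^{-1}(\Omega)}\|\bnabla \bw\|_{L^2(\Omega)}$.

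The continuity of $a$ is immediate from the upper bound in Proposition \ref{probgenStokesisunifelliptic} (or directly from H\"older's inequality together with $\sfA\in L^\infty(\Omega)^{3\times 3}$ and the upper bound in \eqref{Dandgradvcoerciveinhzero1}), giving $|a(\bv,\bw)|\leq \sqrt{2}\|\sfA\|_{L^\infty(\Omega)}\|\bnabla\bv\|_{L^2(\Omega)}\|\bnabla\bw\|_{L^2(\Omega)}$. The coercivity of $a$ on $V(\Omega)$ is exactly the lower bound in \eqref{probgenStokesunifellip}, already established in Proposition \ref{probgenStokesisunifelliptic}, which yields $a(\bv,\bv)\geq \alpha\|\bnabla\bv\|_{L^2(\Omega)}^2$ for all $\bv\in V(\Omega)$.

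With these two hypotheses verified, the Lax-Milgram theorem produces a unique $\bv\in V(\Omega)$ solving $a(\bv,\bw)=\ell(\bw)$ for every $\bw\in V(\Omega)$, which is exactly \eqref{weakgeneralizedStokes}. For the quantitative estimate \eqref{weaksolutionweakgeneralizedStokesestimate}, I would simply take $\bw=\bv$ as a test function in \eqref{weakgeneralizedStokes}, use the coercivity bound on the left and the continuity of $\ell$ on the right to obtain
\[
\alpha\|\bnabla\bv\|_{L^2(\Omega)}^2 \leq a(\bv,\bv) = \int_\Omega \bbf\cdot \bv\,\rd\bx \leq \|\bbf\|_{H^{-1}(\Omega)}\|\bnabla\bv\|_{L^2(\Omega)},
\]
and divide by $\|\bnabla\bv\|_{L^2(\Omega)}$ (the estimate being trivial if $\bv=0$). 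There is no real obstacle here: the entire argument reduces to a clean invocation of Lax-Milgram, with all the substantive work (the coercivity) having already been packaged in Proposition \ref{probgenStokesisunifelliptic}.
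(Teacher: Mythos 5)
Your proposal is correct and follows essentially the same route as the paper: the same bilinear form and linear functional on $V(\Omega)$, continuity and coercivity drawn from Proposition \ref{probgenStokesisunifelliptic}, a clean invocation of Lax--Milgram, and the estimate obtained by testing with $\bw=\bv$. The only difference is cosmetic: you write the test function $\bnabla\bw$ correctly in the definition of $a(\bv,\bw)$, whereas the paper's displayed formula has a typo with $\bnabla\bv$ in its place.
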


\begin{proof}
  Consider the bilinear form $a: V(\Omega)\times V(\Omega) \rightarrow \RR$ given by
  \[ a(\bv,\bw) = \int_\Omega (\sfD(\bv)\sfA + \sfA\sfD(\bv)) : \bnabla \bv \;\rd \bx, \quad \forall \bv, \bw \in V(\Omega),
  \]
  and the linear form $\ell:V(\Omega)\rightarrow \RR$ given by
  \[ \ell(\bw) = \int_\Omega \bbf\cdot \bw \;\rd\bx, \quad \forall \bw\in V(\Omega).
  \]
  Since $\sfA\in L^\infty(\Omega)^{3\times 3}$, it follows that $a(\cdot,\cdot)$ is a well-defined and continuous bilinear form on $V(\Omega)$. Moreover, thanks to Proposition \ref{probgenStokesisunifelliptic}, the estimate \eqref{probgenStokesunifellip} holds in $V(\Omega)$ with $\delta=\alpha$, and $a(\cdot,\cdot)$ is a coercive bilinear form on $V(\Omega)$. On the other hand, since $\bbf\in H^{-1}(\Omega)^3\subset V'(\Omega)$, then $\ell(\cdot)$ is a continuous linear form on $V(\Omega)$. Thus, it follows from the Lax-Milgram Theorem (see e.g. \cite{laxmilgram}) that there exists one, and only one, $\bv\in V(\Omega)$, such that $a(\bv,\bw) = \ell(\bw)$, for all $\bw\in V(\Omega)$, which proves the existence and uniqueness part of the statement. For the estimate, we use the ellipticity condition \eqref{probgenStokesunifellip} to write, with $\bw=\bv$,
  \[ \alpha \|\bnabla \bv\|_{L^2(\Omega)}^2 \leq a(\bv,\bv) = \ell(\bv) = \int_\Omega \bbf\cdot \bv \;\rd\bx \leq \|\bbf\|_{H^{-1}(\Omega)}\|\bv\|_{H_0^1(\Omega)} = \|\bbf\|_{H^{-1}(\Omega)}\|\bnabla \bv\|_{L^2(\Omega)},
  \]
  so that
  \[ \alpha \|\bnabla \bv\|_{L^2(\Omega)} \leq \|\bbf\|_{H^{-1}(\Omega)},
  \]
  which yields \eqref{weaksolutionweakgeneralizedStokesestimate}. This completes the proof.
\end{proof}
\medskip

\begin{rmk}
  \label{rmkweaksolutionweakgeneralizedStokesdualforces}
  In the case $\bbf\in L^2(\Omega)^3$ in Proposition \ref{weaksolutionweakgeneralizedStokes}, the estimate \eqref{weaksolutionweakgeneralizedStokesestimate} implies  $\alpha \|\bnabla \bv\|_{L^2(\Omega)} \leq \lambda_1^{-1/2}\|\bbf\|_{L^2(\Omega)}$, where $\lambda_1$ is the constant in the Poincar\'e inequality \eqref{poincareineq}.
\end{rmk}

Using Proposition \ref{weaksolutionweakgeneralizedStokes}, along with Propositions \ref{proprecoverpressure} and \ref{regpressurel2}, we recover the pressure and obtain the following existence and uniqueness result.
\begin{thm}[Existence and uniqueness]
  \label{weaksolutionweakgeneralizedStokeswithpressure}
  Let $\Omega\subset \RR^3$ be a bounded Lipschitz domain, $\sfA\in L^\infty(\Omega)^{3\times 3}$ be a uniformly positive definite symmetric tensor on $\Omega$, and $\bbf\in H^{-1}(\Omega)^3$. Then, there exist a unique vector field $\bv\in V(\Omega)$ and a scalar field $p\in L^2(\Omega)$ that solve the weak formulation \eqref{weakgeneralizedStokeswithpressure} of the generalized Stokes problem. The field $p$ is unique up to a constant, in $L^2(\Omega)$. Moreover, if $\alpha>0$ is such that \eqref{unifellip} holds, then
  \begin{equation}
    \label{weaksolutionweakgeneralizedStokesestimaterepeat}
    \|\bnabla \bv\|_{L^2(\Omega)} \leq \frac{1}{\alpha} \|\bbf\|_{H^{-1}(\Omega)},
  \end{equation}
  and  
  \begin{equation}
    \label{weaksolutionweakgeneralizedStokesestimatewithpressure}
    \|p\|_{L^2(\Omega)/\RR} \leq \frac{c(\Omega)}{\alpha} \|\sfA\|_{L^\infty(\Omega)}\|\bbf\|_{H^{-1}(\Omega)},
  \end{equation}
  where $c(\Omega)$ is a shape constant.
\end{thm}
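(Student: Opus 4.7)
The plan is to derive this theorem as a direct consequence of the divergence-free weak solution already constructed in Proposition \ref{weaksolutionweakgeneralizedStokes}, combined with the classical pressure-recovery lemmas (Propositions \ref{proprecoverpressure} and \ref{regpressurel2}).

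First, I would invoke Proposition \ref{weaksolutionweakgeneralizedStokes} to obtain the unique $\bv\in V(\Omega)$ satisfying the divergence-free weak formulation \eqref{weakgeneralizedStokes}, together with the velocity estimate \eqref{weaksolutionweakgeneralizedStokesestimaterepeat}. Uniqueness of $\bv$ for the formulation \eqref{weakgeneralizedStokeswithpressure} is immediate, since any pair $(\bv,p)$ solving \eqref{weakgeneralizedStokeswithpressure} automatically satisfies \eqref{weakgeneralizedStokes} after restricting the test functions to $V(\Omega)$, where the pressure term drops out.

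Next, I would recover $p$ by considering the linear functional $\bg\in H^{-1}(\Omega)^3$ defined on $\bw\in H_0^1(\Omega)^3$ by
\[
  \dual{\bg,\bw} = \int_\Omega (\sfD(\bv)\sfA + \sfA\sfD(\bv)):\bnabla \bw\;\rd \bx - \int_\Omega \bbf\cdot \bw\;\rd\bx.
\]
By the weak formulation satisfied by $\bv$, $\dual{\bg,\bw}=0$ for every $\bw\in V(\Omega)$, and in particular for every $\bw\in \Vcal(\Omega)$. Proposition \ref{proprecoverpressure} then yields a distribution $p\in \Dcal(\Omega)'$ with $\bnabla p = \bg$. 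Since $\bg \in H^{-1}(\Omega)^3$ and $\Omega$ has a Lipschitz boundary, Proposition \ref{regpressurel2} upgrades this to $p\in L^2(\Omega)$ and gives $\|p\|_{L^2(\Omega)/\RR} \leq c(\Omega)\|\bnabla p\|_{H^{-1}(\Omega)}$. Any two such pressures differ by a distribution with zero gradient, hence by a constant, establishing the uniqueness of $p$ modulo constants.

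The only quantitative work is then to bound $\|\bnabla p\|_{H^{-1}(\Omega)}$. For $\bw\in H_0^1(\Omega)^3$ with $\|\bnabla \bw\|_{L^2(\Omega)}\le 1$, I would estimate
\[
   |\dual{\bnabla p,\bw}| \leq 2\|\sfA\|_{L^\infty(\Omega)}\|\sfD(\bv)\|_{L^2(\Omega)}\|\bnabla \bw\|_{L^2(\Omega)} + \|\bbf\|_{H^{-1}(\Omega)}\|\bnabla\bw\|_{L^2(\Omega)},
\]
using H\"older's inequality, $\|\sfD(\bv)\|_{L^2}\le \|\bnabla \bv\|_{L^2}$ from \eqref{Dandgradvcoerciveinhzero1}, and the definition of $\|\bw\|_{H_0^1(\Omega)}$ from \eqref{norminH01}. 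Plugging in the velocity bound \eqref{weaksolutionweakgeneralizedStokesestimaterepeat} and invoking \eqref{alphaboundedLinfty} (i.e.\ $\alpha \le \|\sfA\|_{L^\infty(\Omega)}$) to absorb the stray $\|\bbf\|_{H^{-1}(\Omega)}$ term into the $\|\sfA\|_{L^\infty(\Omega)}/\alpha$ prefactor yields \eqref{weaksolutionweakgeneralizedStokesestimatewithpressure}, after composing with the shape constant from Proposition \ref{regpressurel2}. No step is genuinely hard; the only bookkeeping to watch is the proper dimensional form of the constant, which the bound \eqref{alphaboundedLinfty} is precisely designed to enable.
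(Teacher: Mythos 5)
Your proposal is correct and follows essentially the same route as the paper: existence and the velocity bound from Proposition \ref{weaksolutionweakgeneralizedStokes} via Lax--Milgram, pressure recovery through Propositions \ref{proprecoverpressure} and \ref{regpressurel2}, and the pressure estimate by bounding $\|\bnabla p\|_{H^{-1}(\Omega)}$ with H\"older, the $\sfD(\bv)$--$\bnabla\bv$ comparison, the velocity bound, and \eqref{alphaboundedLinfty}. The only cosmetic difference is that the paper bounds $\|\sfD(\bv)\sfA+\sfA\sfD(\bv)\|_{L^2(\Omega)}$ directly rather than estimating the duality pairing, which changes nothing up to the shape constant.
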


\begin{proof}
  From Proposition \ref{weaksolutionweakgeneralizedStokes} we have a unique solution $\bv\in V(\Omega)$ of the generalized Stokes problem \eqref{weakgeneralizedStokes}, which can be written as
  \[ \dual{-\bnabla \cdot (\sfD(\bv)\sfA + \sfA\sfD(\bv)) - \bbf, \bw }_{\Dcal'(\Omega)^3, \Dcal(\Omega)^3} = 0, \qquad \forall \bw\in \Vcal(\Omega).
  \]
  Then, from Propositions \ref{proprecoverpressure} and \ref{regpressurel2}, there is a distribution $p\in L^2(\Omega)/\RR$, which is clearly unique up to a constant, such that
  \[ \bnabla p = -\bnabla \cdot (\sfD(\bv)\sfA + \sfA\sfD(\bv)) - \bbf,
  \]
  with
  \[ \|p\|_{L^2(\Omega)/\RR} \leq c(\Omega) \|\bnabla(\sfD(\bv)\sfA + \sfA\sfD(\bv)) + \bbf\|_{H^{-1}(\Omega)}.
  \]
  Thus,
  \[ \|p\|_{L^2(\Omega)/\RR} \leq c(\Omega) \left(\|\sfD(\bv)\sfA + \sfA\sfD(\bv)\|_{L^2(\Omega)} + \|\bbf\|_{H^{-1}(\Omega)}\right).
  \]
  In three dimensions, we estimate
  \[ \|\sfD(\bv)\sfA\|_{L^2(\Omega)} \leq \sqrt{3} \|\sfA\|_{L^\infty(\Omega)}\|\sfD(\bv)\|_{L^2(\Omega)},
  \]
  and similarly for $\sfA\sfD(\bv)$. Then, using \eqref{Dandgradv}, we find
  \[ \|p\|_{L^2(\Omega)/\RR}  \leq c(\Omega)\left(\sqrt{3} \|\sfA\|_{L^\infty(\Omega)}\|\bnabla \bv\|_{L^2(\Omega)} + \|\bbf\|_{H^{-1}(\Omega)}\right). 
  \]
  Using the previous estimate \eqref{weaksolutionweakgeneralizedStokesestimaterepeat} for $\bnabla \bv$ we obtain
  \[ \|p\|_{L^2(\Omega)/\RR} \leq c(\Omega)\left(\frac{\sqrt{3} \|\sfA\|_{L^\infty(\Omega)}}{\alpha} + 1\right)\|\bbf\|_{H^{-1}(\Omega)}.
  \]  
Using \eqref{alphaboundedLinfty} we finally arrive at \eqref{weaksolutionweakgeneralizedStokesestimatewithpressure}, with another shape constant $c(\Omega)$.
\end{proof}
\medskip

\subsection{Regularity of the solutions of the generalized Stokes problem}
\label{subsecreggenStokes}

As usual, if we have more regularity on the data, then we obtain more regularity on the solution. 

The first regularity result that we consider follows from the paper of Mach\'a \cite[Theorem 4.2]{macha2011}, about a general class of systems that includes ours, or from the result of Giaquinta and Modica \cite[Part II, Theorem 1.2 in page 198 and Remark 1.5 in page 199]{giaquintamodica1982}, about a slightly more general system, and recalling that, with smooth boundaries, we have $W^{1,\infty}=\Ccal^1$. See also the related result of Hyu and Star\'a \cite[Theorem 4.1]{huystara2006}, which unfortunately requires that the operator $\sfL$, defined in \eqref{defopL}, be strictly positive for any $\sfM$ and not only for symmetric tensors $\sfM$ (see Remark \ref{rmkopLnotpositive}), although this is clearly unnecessary as discussed in \cite[Part II, Remark 1.5]{giaquintamodica1982}. The proof of those results is based on a now classical idea of estimating the finite-difference quotients of the solution, considered for instance in \cite{adn2}, and used here in the proof of higher-regularity in Theorem \ref{hkplustworegularity} (see also \cite[Theorem 8.12, page 186]{gilbargtrudinger2001} for a result with similar hypotheses in a classical elliptic problem).

\begin{thm}[Regularity]
  \label{h2regularity}
  Let $\Omega$ be a bounded domain in $\RR^3$ with a boundary of class $\Ccal^2$ and suppose $\sfA\in W^{1,\infty}(\Omega)^{3\times 3}$ is a uniformly positive definite symmetric tensor on $\Omega$ and $\bbf \in L^2(\Omega)^3$. Then, the solution $(\bv, p)$ of the weak formulation \eqref{weakgeneralizedStokeswithpressure} of the generalized Stokes problem is such that $\bv \in V(\Omega)\cap H^2(\Omega)$ and $p\in H^1(\Omega)$, with 
  \begin{equation}
    \label{h2regularityforh2v}
    \|D^2 \bv\|_{L^2(\Omega)} \leq \frac{c(\Omega)}{\alpha}\left( \|\bbf\|_{L^2(\Omega)} + \frac{1}{\alpha}\|\sfA\|_{W^{1,\infty}_{\lambda_1}(\Omega)}\|\bbf\|_{H^{-1}(\Omega)}\right),
  \end{equation}
  and
  \begin{equation}
    \label{h2regularityforh1p}
    \|\bnabla p\|_{L^2(\Omega)} \leq \frac{c(\Omega)}{\alpha}\|\sfA\|_{L^\infty(\Omega)}\left( \|\bbf\|_{L^2(\Omega)} + \frac{1}{\alpha}\|\sfA\|_{W^{1,\infty}_{\lambda_1}(\Omega)}\|\bbf\|_{H^{-1}(\Omega)}\right),
  \end{equation}
  where $\lambda_1$ is the Poincar\'e constant and $c(\Omega)$ is a shape constant.  Moreover, $(\bv,p)$ solves the generalized Stokes problem \eqref{generalizedStokes} almost everywhere in $\Omega$, with the first two equations in \eqref{generalizedStokes} holding in $L^2(\Omega)^3$ and $H^1(\Omega)^3$, respectively, and with the last equation holding for the trace of $\bv$ on $\partial\Omega$ in the space $H^{3/2}(\partial \Omega)$.
\end{thm}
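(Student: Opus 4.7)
The overall strategy is to combine Mach\'a's regularity theorem for qualitative regularity with an explicit running of Nirenberg's translation method to track the constants. First, verify the hypotheses of \cite[Theorem 4.2, $k=0$]{macha2011}: $\Omega$ has a $\Ccal^2$ boundary, $\sfA \in W^{1,\infty}(\Omega)^{3\times 3}$ is uniformly positive definite, $\bbf \in L^2$, and the bilinear form is uniformly elliptic on $H_0^1(\Omega)^3$ (which is Proposition \ref{probgenStokesisunifelliptic}). Combined with Theorem \ref{weaksolutionweakgeneralizedStokeswithpressure}, this yields $\bv\in V(\Omega)\cap H^2(\Omega)$ and $p \in H^1(\Omega)/\RR$, with the system holding almost everywhere and the trace boundary condition in $H^{3/2}(\partial\Omega)$.

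For the quantitative bounds \eqref{h2regularityforh2v}--\eqref{h2regularityforh1p}, I would rerun the underlying argument. Cover $\bar\Omega$ by a finite atlas consisting of one interior piece and boundary charts rectifying $\partial\Omega$ to $\{x_3=0\}$, and fix a cutoff $\chi$ adapted to each chart. Test \eqref{weakgeneralizedStokeswithpressure} against $\bw = -\delta_i^{-h}(\chi^2\delta_i^h\bv)$, with $i$ running over all three directions in the interior chart but only over tangential directions $i=1,2$ in the boundary charts. Using the discrete product rule $\delta_i^h(\sfA \sfD(\bv)) = \sfA(\cdot + h\be_i)\delta_i^h \sfD(\bv) + (\delta_i^h \sfA)\sfD(\bv)$, discrete integration by parts, and Lemma \ref{lempositivityofL} applied to $\sfM = \bnabla\delta_i^h \bv$, the principal term on the left is bounded below by $\alpha\|\chi\bnabla\delta_i^h\bv\|_{L^2}^2$. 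The remaining terms involve $\delta_i^h\sfA$ (controlled by $\|\bnabla\sfA\|_{L^\infty}$ via \eqref{finitedifferenceintestimateinfty}--\eqref{finitedifferencebdryestimateinfty}), commutators with $\chi$, the pressure-divergence coupling $\int p\bnabla\cdot\bw\,\rd\bx$, and $\int\bbf\cdot\bw\,\rd\bx$. Estimating by Cauchy--Schwarz and Young, absorbing the quadratic terms in $\chi\bnabla\delta_i^h\bv$ into the coercive left-hand side, and invoking the a priori bounds \eqref{weaksolutionweakgeneralizedStokesestimaterepeat} and \eqref{weaksolutionweakgeneralizedStokesestimatewithpressure} for the zeroth-order occurrences of $\bv$ and $p$, one obtains an $h$-uniform bound on $\delta_i^h\bnabla\bv$ in $L^2$ with constants in the precise form of \eqref{h2regularityforh2v}. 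The missing normal-normal derivatives at the boundary are read off algebraically from the system: $\partial_{x_3}^2 v_3$ is obtained by differentiating $\bnabla\cdot\bv = 0$ once in $x_3$, while $\partial_{x_3}^2 v_k$ for $k=1,2$ is solved from the $k$-th component of the momentum equation, where the coefficient of $\partial_{x_3}^2 v_k$ equals (twice) the symmetric part of $a_{33}$ and is bounded below by $\alpha$ by the ellipticity \eqref{unifellip}.

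The pressure estimate \eqref{h2regularityforh1p} then follows by reading
\begin{equation*}
\bnabla p = \bbf + \bnabla\cdot(\sfD(\bv)\sfA + \sfA\sfD(\bv))
\end{equation*}
in $L^2$ and expanding the divergence, which gives $\|\bnabla p\|_{L^2} \leq c(\Omega)\bigl(\|\bbf\|_{L^2} + \|\sfA\|_{L^\infty}\|D^2\bv\|_{L^2} + \|\bnabla\sfA\|_{L^\infty}\|\bnabla\bv\|_{L^2}\bigr)$, into which one substitutes \eqref{h2regularityforh2v} and \eqref{weaksolutionweakgeneralizedStokesestimaterepeat} and simplifies via \eqref{alphaboundedLinfty}. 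The main technical obstacle is the boundary step: the chart change introduces extra commutators with the rectifying diffeomorphism, and recovering the normal-normal second derivatives algebraically requires the invertibility of a specific $2\times 2$ block of the principal symbol, which is precisely what uniform ellipticity provides. The rest is careful bookkeeping to assemble the estimate in the dimensionally consistent form $(1/\alpha)(\|\bbf\|_{L^2} + (1/\alpha)\|\sfA\|_{W^{1,\infty}_{\lambda_1}}\|\bbf\|_{H^{-1}})$, using the Poincar\'e constant $\lambda_1$ to balance the homogeneous degrees of the $W^{1,\infty}_{\lambda_1}$ norm.
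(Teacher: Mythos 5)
Your proposal is correct and follows essentially the same strategy as the paper: invoke Mach\'a's theorem for the qualitative $H^2\times H^1$ regularity, then run a localized interior/boundary argument with the coercivity of Lemma \ref{lempositivityofL} to extract the explicit constants, recover the normal-normal second derivatives algebraically from the system, and read off $\bnabla p$ from the momentum equation; the only difference is that you use finite differences where the paper, already knowing $\bv\in H^2$, differentiates the equation directly and tests with $\eta_0^2\partial_{x_i}\bv$ — a variation the paper itself notes is equivalent in the remark following the proof. One small imprecision: the coefficient of $\partial_{x_3}^2 v_k$ in the $k$-th momentum component is $\tfrac12(a_{33}+a_{kk})$ with off-diagonal coupling $\tfrac12 a_{k\ell}$, not twice the symmetric part of $a_{33}$, but your subsequent appeal to the invertibility of the resulting $2\times 2$ block (which is $\geq \alpha$ times the identity by \eqref{unifellip}) is the right way to close that step.
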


\begin{proof} 
  We apply the result \cite[Theorem 4.2 with $k=0$]{macha2011} to obtain the desired regularity. In \cite{macha2011}, the system considered is of the form \eqref{generalizedStokeswithL} (even more general, allowing the divergence to be any given function in $H^1(\Omega)$). It is assumed that $\sfL$ is weakly coercive in the sense defined in \cite{macha2011}, which is precisely the lower bound in \eqref{probgenStokesunifellip}, which, as proved in Proposition \ref{probgenStokesisunifelliptic}, holds under our hypothesis that $\sfA$ is a uniformly positive definite symmetric tensor. It is also assumed that the boundary of $\Omega$ is of class $\Ccal^2$ and that $\bbf \in L^2(\Omega)^3$. Finally, it is assumed that $\sfL\in W^{1,\infty}(\Omega)^{3\times 3}$, which follows immediately from the assumption that $\sfA\in W^{1,\infty}(\Omega)^{3\times 3}$ and the fact, which is clear from \eqref{defopL}, that $\sfL$ has the same regularity as $\sfA$. Hence, we find that $\bv\in H^2(\Omega)^3$ and $p\in H^1(\Omega)$. In particular, we deduce, from the weak formulation, that the strong form \eqref{generalizedStokes} holds almost everywhere, with the regularities as we described in the statement of this theorem.
  
  Now that we know that the solution is more regular, we address the Sobolev estimates for the solution in terms of the data.

  Since we now know that $\bv$ is in $H^2(\Omega)^3$, we have that $\bnabla \bv$ is in $H^1(\Omega)$ and hence it has sufficient regularity to be a test function in the weak formulation, but it lacks the condition of vanishing on the boundary. Nevertheless, since $\bv$ vanishes on the boundary, the tangential derivatives of $\bv$ along the boundary also vanish, i.e. $\bnabla\bv(\bx)\cdot \bxi = 0$, for every $\bxi$ tangent to $\Omega$ at $\bx\in \partial\Omega$. This will be sufficient for our purposes.  
  
  The idea is to look at the equations for the partial derivatives of $\bv$. By taking the distributional derivative of \eqref{generalizedStokes} with respect to $x_i$, $i=1,2,3$, we find the equations
\begin{equation}
  \label{generalizedStokesforderivative}
   \begin{cases}
      - \bnabla \cdot (\sfD(\partial_{x_i}\bv)\sfA + \sfA\sfD(\partial_{x_i}\bv)) -\bnabla \cdot (\sfD(\bv)(\partial_{x_i}\sfA) + (\partial_{x_i}\sfA)\sfD(\bv)) \\
      \qquad \qquad \qquad + \bnabla (\partial_{x_i}p) =  \partial_{x_i}\bbf, & \text{in } \Omega,\\
      \bnabla \cdot (\partial_{x_i}\bv) = 0, & \text{in } \Omega,
    \end{cases}
\end{equation}
valid in the distribution sense on $\Omega$. Since we already know that $\bv \in V(\Omega)\cap H^2(\Omega)$ and $p\in H^1(\Omega)$, then the first equation in \eqref{generalizedStokesforderivative} actually holds in $H^{-1}(\Omega)^3$, while the second one holds in $L^2(\Omega)$. The system \eqref{generalizedStokesforderivative} can be written as
\begin{equation}
  \label{generalizedStokesforderivativealt}
   \begin{cases}
      - \bnabla \cdot (\sfD(\bv_i)\sfA + \sfA\sfD(\bv_i)) - \bb_i + \bnabla p_i = \bbf_i, & \text{in } \Omega,\\
      \bnabla \cdot \bv_i= 0, & \text{in } \Omega,
    \end{cases}
\end{equation}
where
\[ \bv_i = \partial_{x_i}\bv, \qquad p_i = \partial_{x_i}p, \qquad \bbf_i = \partial_{x_i}\bbf, \qquad \bb_i = \bB(\partial_{x_i}\sfA,\bv),
\]
with
\begin{equation}
  \label{defbB}
  \bB(\sfM,\bw) =  \bnabla \cdot (\sfD(\bw)\sfM + \sfM\sfD(\bw)),
\end{equation}
for any tensor $\sfM$ and any vector field $\bw$.

Equation \eqref{generalizedStokesforderivativealt} has the same form of \eqref{generalizedStokes}, if viewing the additional lower order term $\bb_i$ as an extra forcing term. Thus, if we had the same no-slip boundary conditions for the derivatives $\bv_i = \partial_{x_i}\bv$, Theorem \ref{weaksolutionweakgeneralizedStokeswithpressure} would apply to \eqref{generalizedStokesforderivativealt} with the right hand side as $\bbf_i + \bb_i$, and estimates \eqref{weaksolutionweakgeneralizedStokesestimaterepeat} and \eqref{weaksolutionweakgeneralizedStokesestimatewithpressure} would yield estimates for $\|\bnabla\bv_i\|_{L^2(\Omega)}$ and $\|p_i\|_{L^2(\Omega)/\RR}$ in terms of $\alpha$, $\|\bbf_i + \bb_i\|_{H^{-1}(\Omega)}$ and $\|\sfA\|_{L^\infty(\Omega)}$. These estimates, together  with an estimate for $\bb_i = \bB(\partial_{x_i}\sfA,\bv)$ using the bound for $\partial_{x_i}\sfA$ in $L^\infty$ and the estimate \eqref{weaksolutionweakgeneralizedStokesestimaterepeat} for $\bnabla \bv$, would yield estimates similar to \eqref{h2regularityforh2v} and \eqref{h2regularityforh1p}, with the differences that $c$ above is independent of $\Omega$ and only the higher order term $\bnabla \sfA$ of $\sfA$ would appear. 

However, of course, $\bv_i = \partial_{x_i}\bv$ does not vanish on the boundary, so the above argument does not apply. Nevertheless, the essence of the argument can be recovered by localizing the equations to the interior and to the boundary of the domain and using that the derivatives of $\bv$ tangential to the boundary vanish. At the end, this will give us \eqref{h2regularityforh2v} and \eqref{h2regularityforh1p} with the lower order terms of $\sfA$ and the multiplicative shape constant $c(\Omega)$.
  
More precisely, we use a partition of unit based on a finite covering of the boundary by open sets $U_m$, $m=1, \ldots, M$, for some $M\in\NN$, and a complementing open set $U_0$ compactly included in $\Omega$, so that $\Omega\subset\bigcup_{m=0}^M U_m.$ We write $\bv_i=\sum_m \eta_m\bv_i$, where each $\eta_m$ is supported on $U_m$, with $0\leq \eta_m\leq 1$, and $\sum_m\eta_m = 1$ on $\Omega$. We then estimate each portion $\eta_m\bv_i$. 
  
The idea of the proof is that, for the component $m=0$, $\eta_0$ is supported on the interior of the domain, so that $\eta_0\bv$ vanishes on a neighborhood of the boundary, and so does $\eta_0^2\partial_i\bv$, for every $i=1,2,3$. Moreover, since we already know that $\bv\in H^2(\Omega)^3$, then $\eta_0^2\partial_i\bv$ belongs to $H_0^1(\Omega)^3$. Thus, the estimate for $\eta_0\partial_{x_i}\bv$ is obtained directly from the equation \eqref{generalizedStokesforderivative}, by testing it with $\eta_0^2\partial_i\bv$. 

For the boundary components $m=1,\ldots, M$, we first rectify the domain to a hemisphere by a $\Ccal^2$ change of variables, transforming the system into a similar system with coefficients with the same regularity as $\bA$, and then use that, in the rectified system, all the derivatives in the directions tangential to the boundary are well defined and do vanish on the boundary, which allows us to use the ellipticity of the equation and the weak formulation of \eqref{generalizedStokesforderivative} tested with a localized version of each tangential derivative to get uniform bounds for the tangential derivatives. Then, we use the differential equation itself to express the derivative in the direction normal to the boundary in terms of the derivatives in the directions tangential to the boundary and other lower order derivatives (just like we write $v_{zz} = - f - v_{xx}-v_{yy}$ over the half-plane domain $z<0$, in the case of the elliptic equation $-\Delta v = f$), hence obtaining an estimate of the same order for the derivative normal to the boundary as well. Let us go into that with more details. And we work with \eqref{generalizedStokesforderivativealt} for notational simplicity. 

For the interior regularity, we test the equation \eqref{generalizedStokesforderivativealt}, valid in $H^{-1}(\Omega)^3$, with the function $\bw = \eta_0^2 \bv_i$, which belongs to $H_0^1(\Omega)^3$, so that
  \begin{multline}
    \label{weakgeneralizedStokeswithpressureinterior} 
    \int_\Omega (\sfD(\bv_i)\sfA + \sfA\sfD(\bv_i)) \cdot \bnabla (\eta_0^2\bv_i) \;\rd \bx  + \int_{\Omega}  (\sfD(\bv)(\partial_{x_i}\sfA) + (\partial_{x_i}\sfA)\sfD(\bv))\cdot \bnabla (\eta_0^2\bv_i) \;\rd \bx \\
    - \int_\Omega  p_i \bnabla \cdot (\eta_0^2\bv_i) \;\rd \bx = \int_\Omega \bbf_i \cdot (\eta_0^2\bv_i) \;\rd \bx.
  \end{multline}

Notice that $\bnabla(\eta_0^2\bv_i) = \eta_0 \bnabla(\eta_0\bv_i) + \eta_0(\bnabla\eta_0)\otimes \bv_i$, and 
$D(\eta_0\bv_i) = \eta_0D(\bv_i) + \sfS((\bnabla\eta_0)\otimes \bv_i)$, so that the first term in \eqref{weakgeneralizedStokeswithpressureinterior} can be written as
\begin{align*}
  \int_\Omega (\sfD(\bv_i)\sfA + \sfA\sfD(\bv_i)) & \cdot \bnabla (\eta_0^2\bv_i) \;\rd \bx \\
    & = \int_\Omega \eta_0(\sfD(\bv_i)\sfA + \sfA\sfD(\bv_i)) \cdot \bnabla (\eta_0\bv_i) \;\rd \bx  \\
  & \qquad + \int_\Omega \eta_0(\sfD(\bv_i)\sfA + \sfA\sfD(\bv_i)) \cdot ((\bnabla\eta_0)\otimes \bv_i) \;\rd \bx \\
  & =  \int_\Omega (\sfD(\eta_0\bv_i)\sfA + \sfA\sfD(\eta_0\bv_i)) \cdot \bnabla (\eta_0\bv_i) \;\rd \bx \\
  & \qquad - \int_\Omega (\sfS((\bnabla\eta_0)\otimes \bv_i)\sfA + \sfA\sfS((\bnabla\eta_0)\otimes \bv_i)) \cdot \bnabla (\eta_0\bv_i) \;\rd \bx  \\
  & \qquad + \int_\Omega (\sfD(\eta_0\bv_i)\sfA + \sfA\sfD(\eta_0\bv_i)) \cdot ((\bnabla\eta_0)\otimes (\eta_0\bv_i)) \;\rd \bx \\
  & \qquad - \int_\Omega (\sfS((\bnabla\eta_0)\otimes \bv_i)\sfA + \sfA\sfS((\bnabla\eta_0)\otimes \bv_i)) \cdot ((\bnabla\eta_0)\otimes (\eta_0\bv_i)) \;\rd \bx.
\end{align*}
Since $p_i = \partial_{x_i}p$ and $\bv_i$ is divergence-free, we rewrite the pressure term as
\[ - \int_\Omega  \partial_{x_i} p \bnabla \cdot (\eta_0^2\bv_i) \;\rd \bx = 2\int_\Omega  p \partial_{x_i} ((\bnabla \eta_0) \cdot (\eta_0\bv_i)) \;\rd \bx.
\]
Using that $\bbf_i=\partial_{x_i}\bbf$, we rewrite the forcing term as
\[ \int_\Omega \bbf_i \cdot (\eta_0^2\bv_i) \;\rd \bx = - \int_\Omega \bbf \cdot \partial_{x_i}(\eta_0^2\bv_i) \;\rd \bx.
\]
Hence, we find that
  \begin{equation}
    \label{weakgeneralizedStokeswithpressureinteriorb} 
    \begin{aligned}
    \int_\Omega & (\sfD(\eta_0\bv_i)\sfA + \sfA\sfD(\eta_0\bv_i)) \cdot \bnabla (\eta_0\bv_i) \;\rd \bx \\
    & = - \int_{U_0}  \bb_\bgamma \cdot (\delta_i^{-h}(\eta_0^2\delta_i^h\bv_\bgamma)) \;\rd \bx \\
    & \qquad + \int_\Omega (\sfS((\bnabla\eta_0)\otimes \bv_i)\sfA + \sfA\sfS((\bnabla\eta_0)\otimes \bv_i)) \cdot \bnabla (\eta_0\bv_i) \;\rd \bx \\
    & \qquad - \int_\Omega (\sfD(\eta_0\bv_i)\sfA + \sfA\sfD(\eta_0\bv_i)) \cdot ((\bnabla\eta_0)\otimes (\eta_0\bv_i)) \;\rd \bx \\
  & \qquad + \int_\Omega (\sfS((\bnabla\eta_0)\otimes \bv_i)\sfA + \sfA\sfS((\bnabla\eta_0)\otimes \bv_i)) \cdot ((\bnabla\eta_0)\otimes (\eta_0\bv_i)) \;\rd \bx \\
  & \qquad - \int_{\Omega}  (\sfD(\bv)(\partial_{x_i}\sfA) + (\partial_{x_i}\sfA)\sfD(\bv))\cdot \bnabla (\eta_0^2\bv_i) \;\rd \bx \\
  & \qquad - 2\int_\Omega  p \partial_{x_i} ((\bnabla \eta_0) \cdot (\eta_0\bv_i)) \;\rd \bx \\
   & \qquad  - \int_\Omega \bbf \cdot \partial_{x_i}(\eta_0^2\bv_i) \;\rd \bx.
  \end{aligned}
  \end{equation}
  
  Now we estimate each term in \eqref{weakgeneralizedStokeswithpressureinteriorb}. Thanks to the uniform ellipticity of \eqref{probgenStokesunifellip} with $\delta=\alpha$, proved in Proposition \ref{probgenStokesisunifelliptic}, the term in the left hand side of \eqref{weakgeneralizedStokeswithpressureinteriorb} is bounded below as follows
  \begin{equation}
    \int_\Omega (\sfD(\eta_0\bv_i)\sfA + \sfA\sfD(\eta_0\bv_i)) \cdot \bnabla (\eta_0\bv_i) \;\rd \bx \geq \alpha \|\bnabla(\eta_0\bv_i)\|_{L^2(\Omega)}^2.
  \end{equation}
  
  The first term in the right hand side of \eqref{weakgeneralizedStokeswithpressureinteriorb} is bounded using H\"older's inequality and the $L^\infty$ bounds on $A$ and on $\bnabla \eta_0$:
  \begin{multline*}
    - \int_\Omega (\sfS((\bnabla\eta_0)\otimes \bv_i)\sfA + \sfA\sfS((\bnabla\eta_0)\otimes \bv_i)) \cdot \bnabla (\eta_0\bv_i) \;\rd \bx \\
    \leq 6\|\sfA\|_{L^\infty(\Omega)}\|\bnabla \eta_0\|_{L^\infty(\Omega)} \|\bnabla\bv\|_{L^2(\Omega)}\|\bnabla (\eta_0\bv_i)\|_{L^2(\Omega)}.
  \end{multline*}
  The second term is estimated using again H\"older's inequality and the $L^\infty$ bounds on $A$ and on $\bnabla \eta_0$, together with the fact that $0\leq \eta_0 \leq 1$:
  \begin{multline*}
     \int_\Omega (\sfD(\eta_0\bv_i)\sfA + \sfA\sfD(\eta_0\bv_i)) \cdot ((\bnabla\eta_0)\otimes (\eta_0\bv_i)) \;\rd \bx  \\
    \leq 6\|\sfA\|_{L^\infty(\Omega)}\|\bnabla \eta_0\|_{L^\infty(\Omega)} \|\bnabla\bv\|_{L^2(\Omega)}\|\bnabla (\eta_0\bv_i)\|_{L^2(\Omega)}.
  \end{multline*}

  The estimate on the third term also depends on the $L^\infty$ bounds on $A$ and on $\bnabla \eta_0$, using again the fact that $0\leq \eta_0 \leq 1$:
  \begin{multline*}
    - \int_\Omega (\sfS((\bnabla\eta_0)\otimes \bv_i)\sfA + \sfA\sfS((\bnabla\eta_0)\otimes \bv_i)) \cdot ((\bnabla\eta_0)\otimes (\eta_0\bv_i)) \;\rd \bx \leq 6\|\sfA\|_{L^\infty(\Omega)}\|\bnabla \eta_0\|_{L^\infty(\Omega)}^2 \|\bnabla\bv\|_{L^2(\Omega)}^2.
  \end{multline*}
  
  The fourth term in the right hand side depends on the $L^\infty$ bound on $\partial_{x_i}\sfA$:
  \begin{multline*} 
    \int_{\Omega}  (\sfD(\bv)(\partial_{x_i}\sfA) + (\partial_{x_i}\sfA)\sfD(\bv))\cdot \bnabla (\eta_0^2\bv_i) \;\rd \bx  \\
     \leq 6\|\partial_{x_i}\sfA\|_{L^\infty(\Omega)}\|\bnabla\bv\|_{L^2(\Omega)}(\|\bnabla\eta_0\|_{L^\infty(\Omega)}\|\bnabla\bv\|_{L^2(\Omega)} + \|\bnabla(\eta_0\bv_i)\|_{L^2(\Omega)}).
  \end{multline*}  
    
  The pressure term is estimated using H\"older's inequality and $L^\infty$ bounds on $\bnabla \eta_0$ and on $D^2\eta_0$, along with the fact that $0\leq \eta_0 \leq 1$:
  \begin{multline*} 
  2\int_\Omega  p \partial_{x_i} ((\bnabla \eta_0) \cdot (\eta_0\bv_i)) \;\rd \bx \leq 2\|p\|_{L^2(\Omega)} \|\bnabla((\bnabla \eta_0) \cdot (\eta_0\bv_i))\|_{L^2(\Omega)} \\
  \leq 2\|p\|_{L^2(\Omega)} \left(\|\bnabla (\bnabla\eta_0)\|_{L^\infty(\Omega)} \|\eta_0\bv_i\|_{L^2(\Omega)} + \|\bnabla\eta_0\|_{L^\infty(\Omega)} \|\bnabla (\eta_0\bv_i)\|_{L^2(\Omega)}\right) \\
  \leq 2\|p\|_{L^2(\Omega)} \left(\|D^2\eta_0\|_{L^\infty(\Omega)} \|\bnabla\bv\|_{L^2(\Omega)} + \|\bnabla\eta_0\|_{L^\infty(\Omega)} \|\bnabla (\eta_0\bv_i)\|_{L^2(\Omega)}\right).
  \end{multline*}
 
 The estimate on the forcing term in the right hand side uses H\"older's inequality and the $L^\infty$ bound on $\bnabla\eta_0$, using again that $0\leq \eta_0\leq 1$:
  \begin{multline*}
    - \int_\Omega \bbf \cdot \partial_{x_i}(\eta_0^2\bv_i) \;\rd \bx \leq \|\bbf\|_{L^2(\Omega)} \|\bnabla(\eta_0^2\bv_i)\|_{L^2(\Omega)} \\
    \leq \|\bbf\|_{L^2(\Omega)}(\|\bnabla\eta_0\|_{L^\infty(\Omega)}\|\eta_0\bv_i\|_{L^2(\Omega)} + \|\eta_0\bnabla(\eta_0\bv_i)\|_{L^2(\Omega)}) \\
    \leq \|\bbf\|_{L^2(\Omega)}(\|\bnabla\eta_0\|_{L^\infty(\Omega)}\|\bnabla\bv\|_{L^2({\Omega})} + \|\bnabla(\eta_0\bv_i)\|_{L^2(\Omega)}).    
  \end{multline*}

  Now, using Young's inequality we put all the estimates together and arrive at 
\begin{multline}
    \label{interiorestimateh2}
    \frac{\alpha}{2} \|\bnabla(\eta_0\bv_i)\|_{L^2(\Omega)}^2 \leq 6\|\sfA\|_{L^\infty(\Omega)}\|\bnabla \eta_0\|_{L^\infty(\Omega)}^2 \|\bnabla \bv\|_{L^2(\Omega)}^2 + \frac{2}{\alpha} 12^2\|\sfA\|_{L^\infty(\Omega)}^2\|\bnabla \eta_0\|_{L^\infty(\Omega)}^2 \|\bnabla \bv\|_{L^2(\Omega)}^2 \\
    + 6\|\partial_{x_i}\sfA\|_{L^\infty(\Omega)} \|\bnabla \eta_0\|_{L^\infty(\Omega)}\|\bnabla \bv\|_{L^2(\Omega)}^2 + \frac{2}{\alpha}6^2\|\partial_{x_i}\sfA\|_{L^\infty(\Omega)}^2 \|\bnabla \bv\|_{L^2(\Omega)}^2 \\
    + 2\|p\|_{L^2(\Omega)} \|\bnabla(\bnabla\eta_0)\|_{L^\infty(\Omega)}\|\bnabla\bv\|_{L^2(\Omega)} + \frac{2}{\alpha}2^2\|p\|_{L^2(\Omega)}^2 \|\bnabla\eta_0\|_{L^\infty(\Omega)}^2 \\
    + \|\bbf\|_{L^2(\Omega)}\|\bnabla\eta_0\|_{L^\infty(\Omega)}\|\bnabla\bv\|_{L^2(\Omega)} + \frac{2}{\alpha}  \|\bbf\|_{L^2(\Omega)}^2.
\end{multline}
This can be simplified to the following form:
\begin{multline}
  \label{interiorestimateh2b}
  \frac{\alpha}{2} \|\bnabla(\eta_0\bv_i)\|_{L^2(\Omega)}^2 \leq \left(\alpha \lambda_1 c_1(\eta_0) + \frac{\lambda_1c_2(\eta_0)}{\alpha}\|\sfA\|_{L^\infty(\Omega)}^2 + \frac{c}{\alpha}\|\partial_{x_i}\sfA\|_{L^\infty(\Omega)}^2\right)\|\bnabla \bv\|_{L^2(\Omega)}^2 \\
  + \frac{c}{\alpha}\|\bbf\|_{L^2(\Omega)}^2 + \frac{\lambda_1 c_1(\eta_0)}{\alpha}\|p\|_{L^2(\Omega)}^2,
\end{multline}
where $c$ is a universal constant and $c_1(\eta_0) = c\lambda_1^{-1}(\|\bnabla \eta_0\|_{L^\infty(\Omega)}^2 + \|\bnabla(\bnabla\eta_0)\|_{L^\infty(\Omega)})$, $c_2(\eta_0) = c\lambda_1^{-1}\|\bnabla \eta_0\|_{L^\infty(\Omega)}^2$. Notice that $c_1(\eta_0)$ and $c_2(\eta_0)$ are non-dimensional and can, in fact, be chosen independently of translations, rotations and dilations of $\Omega$, so they are shape constants.

Using \eqref{alphaboundedLinfty}, the inequality \eqref{interiorestimateh2b} yields
\begin{multline}
  \label{interiorestimateh2c}
  \|\bnabla(\eta_0\bv_i)\|_{L^2(\Omega)}^2 \\ \leq \frac{c(\Omega)}{\alpha^2}\left( \|\bbf\|_{L^2(\Omega)}^2 + \lambda_1\|p\|_{L^2(\Omega)}^2 + \left(\lambda_1\|\sfA\|_{L^\infty(\Omega)}^2 + \|\bnabla\sfA\|_{L^\infty(\Omega)}^2\right)\|\bnabla \bv\|_{L^2(\Omega)}^2\right),
\end{multline}
for $i=1,2,3$, where $c(\Omega)$ is a shape constant.

  As for the estimates near the boundary, they are obtained, as mentioned above, by first rectifying each portion of the covering of the boundary. For each $m=1,\ldots, M$, this amounts to a change of variables $\bvarphi: B \rightarrow \RR^3$ that is a diffeomorphism of class $\Ccal^2$ from a ball $B\subset\RR^3$ onto the open set $U_m$. The transformation $\bx = \bvarphi(\by)$ changes the coefficients of $\sfA$ and of the forcing term $\bbf$ to terms with the same regularity, and bounded by the $W^{1,\infty}(\Omega)^{3\times 3}$ norm of $\sfA$ and the $L^2(\Omega)^3$ norm of $\bbf$, respectively (see \cite[Theorem 13.49]{AB}). Thus,  the weak formulation changes to 
\begin{multline}
    \label{weakgeneralizedStokeswithpressureinhemisphere}
    \int_{B^-} (\sfD(\bv(\bvarphi(\by)))\sfA(\bvarphi(\by)) + \sfA(\bvarphi(\by))\sfD(\bv(\bvarphi(\by)))) : \bnabla \bw(\bvarphi(\by)) \det D\bvarphi(\by)\;\rd \by \\ 
    + \int_{B^-} p(\bvarphi(\by)) (\bnabla \cdot \bw)(\bvarphi(\by)) \det D\bvarphi(\by)\;\rd \by = \int_{B^-} \bbf(\bvarphi(\by)) \cdot \bw(\bvarphi(\by)) \det D\bvarphi(\by)\;\rd \by,
\end{multline}  
for all $\bw\in H_0^1(\Omega)^3$, where $D\bvarphi$ is the differential of $\bvarphi$ and $B^-= B \cap \{y_3<0\} = \bvarphi^{-1}(\Omega\cap U_m)$ is an open hemisphere (the ``southern'' hemisphere) with flat boundary $B\cap \{y_3=0\}$. 

We introduce the rectified variables 
\begin{equation}
  \label{localizedrectifiedvariables}
  \tilde\bv(\by) = \bv(\bvarphi(\by)), \qquad \tilde p(\by) = p(\bvarphi(\by)),
\end{equation}
and use that
\[ \bnabla \tilde\bv(\by) = \bnabla\bv(\bvarphi(\by))D\bvarphi(\by),
\]
and
\[ \sfD(\bv(\bvarphi(\by)) = \sfS(\bnabla\tilde\bv(\by) D\bvarphi(\by)^{-1}),
\]
where $\sfS$ is the symmetrization operator defined in \eqref{defopS}. Similarly for $\tilde p$, $\tilde \bw$, with
\[ (\bnabla \cdot \bw)(\bvarphi(\by)) = D\tilde\bw(\by) : D\bvarphi(\by)^{-tr},
\]
where $D\bvarphi(\by)^{-tr} = (D\bvarphi(\by)^{-1})^\tr$. We also define $\tilde \sfA(\by) = \sfA(\bvarphi(\by))$ and $\tilde\bbf(\by) = \bbf(\bvarphi(\by))$. Then, the weak formulation is rewritten as
\begin{multline}
    \label{weakgeneralizedStokeswithpressureinhemispherenewvar}
    \int_{B^-} (\sfS(\bnabla\tilde\bv(\by) D\bvarphi(\by)^{-1})\tilde \sfA(\by) + \tilde \sfA(\by)\sfS(\bnabla\tilde\bv(\by) D\bvarphi(\by)^{-1}) : (\bnabla \tilde\bw(\by) D\bvarphi(\by)^{-1}) \det D\bvarphi(\by)\;\rd \by \\ 
    + \int_{B^-} \tilde p(\by) (D\tilde\bw(\by) : D\bvarphi(\by)^{-tr}) \det D\bvarphi(\by))\;\rd \by = \int_{B^-}\tilde\bbf(\by) \cdot \tilde\bw(\by) \det D\bvarphi(\by)\;\rd \by,
\end{multline}  
for any ``rectified'' test function $\tilde\bw\in H_0^1(B^-)^3$.  The divergence-free condition on $\bv$ is transformed to the condition
\[ D\tilde\bv(\by) : D\bvarphi(\by)^{-tr} = 0
\]
on $\tilde\bv$.

The ellipticity of the higher order term with respect to the vector field $\bnabla\tilde\bv D\bvarphi^{-1}$ in the problem \eqref{weakgeneralizedStokeswithpressureinhemispherenewvar} follows immediately from the estimate \eqref{symposopL} in Lemma \ref{lempositivityofL} and the fact that $\bvarphi$ is a diffeomorphism. Moreover, since $\bvarphi$ is of class $\Ccal^2$, all the coefficients have the appropriate regularity to yield an estimate for the tangential derivatives $\partial_{y_1}\tilde\bv$ and $\partial_{y_2}\tilde\bv$ of the same form as the interior estimates \eqref{interiorestimateh2c}. Here, we exploit the fact that, since $\tilde\bv$ vanishes on the boundary and the boundary is flat, all the derivatives on the directions tangential to the boundary are well defined and vanish on the boundary as well, so that they can be used as test functions. In particular, this implies that the tangential derivatives also vanish on the boundary, which is expected. The estimates rely on the inequalities \eqref{finitedifferenceintestimate2} and \eqref{finitedifferencebdryestimateinfty} and are of the same form as \eqref{interiorestimateh2c}.

Finally, using \eqref{weakgeneralizedStokeswithpressureinhemispherenewvar}, we isolate the distributional derivatives $\partial_{y_3}\tilde\bv$, in the direction normal to the boundary, in terms of the derivatives $\partial_{y_1}\tilde\bv$ and $\partial_{y_2}\tilde\bv$ and also lower order terms, to obtain an estimate of the form \eqref{interiorestimateh2c} for the derivative in the normal direction too. 

With all the local estimates ready, we use the partition of unity to write
\[D^2 \bv = \bnabla(\bnabla\bv) = \bnabla\left(\sum_{m=0}^M \eta_m \bnabla\bv\right) = \sum_{m=0}^M \bnabla(\eta_m \bnabla\bv),
\]
so that, putting all the estimates together, we arrive at the following global estimate for $D^2\bv$:
\begin{equation}
  \label{estimateh2}
  \|D^2 \bv\|_{L^2(\Omega)}^2 \leq \frac{c(\Omega)}{\alpha^2}\left( \|\bbf\|_{L^2(\Omega)}^2 + \lambda_1\|p\|_{L^2(\Omega)}^2 +\left(\lambda_1\|\sfA\|_{L^\infty(\Omega)}^2 + \|\bnabla\sfA\|_{L^\infty(\Omega)}^2\right)\|\bnabla \bv\|_{L^2(\Omega)}^2\right),
\end{equation}
where $c(\Omega)$ is a shape constant.

Using the estimates \eqref{weaksolutionweakgeneralizedStokesestimaterepeat} and  \eqref{weaksolutionweakgeneralizedStokesestimatewithpressure} for the derivatives of the velocity field and the pressure, we obtain precisely  \eqref{h2regularityforh2v}, for another shape constant $c(\Omega)$.

An estimate for $\bnabla p$ is obtained directly from the equation \eqref{generalizedStokes}: 
\begin{equation}
  \label{basicpestimatefromStokes}
  \|\bnabla p \|_{L^2(\Omega)}  \leq \|\bbf\|_{L^2(\Omega)} + \|\sfA\|_{L^\infty(\Omega)}\|D^2\bv\|_{L^2(\Omega)} + \|\bnabla\sfA\|_{L^\infty(\Omega)}\|\bnabla \bv\|_{L^2(\Omega)}.
\end{equation}  
Using the estimates \eqref{weaksolutionweakgeneralizedStokesestimaterepeat} and \eqref{h2regularityforh2v}, we obtain
\begin{align*}
  \|\bnabla p \|_{L^2(\Omega)} & \leq \|\bbf\|_{L^2(\Omega)}  + \frac{1}{\alpha}\|\bnabla\sfA\|_{L^\infty(\Omega)}\|\bbf\|_{H^{-1}(\Omega)}  \\
    & \qquad + \frac{c(\Omega)}{\alpha}\|\sfA\|_{L^\infty(\Omega)}\left( \|\bbf\|_{L^2(\Omega)} + \frac{1}{\alpha}\left(\lambda_1^{1/2}\|\sfA\|_{L^\infty(\Omega)} + \|\bnabla\sfA\|_{L^\infty(\Omega)}\right)\|\bbf\|_{H^{-1}(\Omega)}\right),
\end{align*}  
Then, using \eqref{alphaboundedLinfty} and replacing $c(\Omega)$ by $c(\Omega)+1$, we find the estimate \eqref{h2regularityforh1p}, which completes the proof.
\end{proof}
\medskip

\begin{rmk}
  The proof of regularity in Theorem \ref{h2regularity} may be done directly, without resorting to \cite[Theorem 4.2 with $k=0$]{macha2011}, by working with the finite differences defined in \eqref{finitedifference}. Instead of taking the differential of the equation \eqref{generalizedStokes} to arrive at \eqref{generalizedStokesforderivative} and test the localized versions of this equation against $\eta_0^2\bv_i$, where $\bv_i = \partial_{x_i}\bv$, we test the localized versions of \eqref{generalizedStokes} against $\delta_i^{-h}(\eta_0^2\delta_i^h\bv)$ and use a change of variables that works like integration by parts. This leads to estimates like \eqref{interiorestimateh2c}, but with $\delta_i^h\bv$ at the left hand side in the place of $\bv_i$. Then, passing to the limit as $h\rightarrow 0$, we obtain the regularity and the corresponding $H^1$-bound for $\partial_{x_i}\bv$, for every $i=1,2,3$. This process is done in more details for the higher-order derivatives obtained in the following section, for which we did not find any regularity result with the desired smoothness conditions on the coefficients of the system.
\end{rmk}

\begin{rmk}
In the derivation of the localized and rectified weak formulation \eqref{weakgeneralizedStokeswithpressureinhemispherenewvar}, since $\bv$ is to be interpreted as a velocity field, say corresponding to the speed of a trajectory $\bx(t)=\bvarphi(\by(t))$, it would be more natural do defined the velocity $\tilde\bv(\by)$ following the idea that 
\[ \bv(t) = \frac{\rd \bx(t)}{\rd t} = D\bvarphi(\by(t))\frac{\rd \by(t)}{\rd t} = D\bvarphi(\by(t))\tilde\bv(t).
\]
This would lead to the rectified velocity field $\tilde\bv(\by) = (D\bvarphi(\by))^{-1}\bv(\bvarphi(\by))$, 
and the localized and rectified weak formulation 
\begin{multline}
    \label{weakgeneralizedStokeswithpressureinhemispherenewvarold}
    \int_{B^-} (D\bvarphi(\by)\sfD_{\by}(\tilde\bv(\by))D\bvarphi(\by)^{-1}\sfA(\bvarphi(\by)) + \sfA(\bvarphi(\by))D\bvarphi(\by)\sfD_{\by}(\tilde\bv(\by))D\bvarphi(\by)^{-1}) \\
      : D\bvarphi(\by)\bnabla_{\by} \tilde\bw(\by)D\bvarphi(\by)^{-1} \det D\bvarphi(\by)\;\rd \by \\ 
    + \int_{B^-} \tilde p(\by) \tr(D\bvarphi(\by)\bnabla_{\by} \tilde\bw(\by)D\bvarphi(\bv)^{-1}) \det D\bvarphi(\by)\;\rd \by = \int_{B^-} \tilde\bbf(\by) \cdot \tilde\bw(\by) \det D\bvarphi(\by)\;\rd \by,
\end{multline}
This works perfectly fine, except that we would need one more degree of regularity from the boundary. For the velocity field $\bv$ to be in $H^2$, we would need $\bvarphi$ of class $\Ccal^3$. In order to avoid this unnecessary extra regularity, we perform the change of variables according \eqref{localizedrectifiedvariables}, as done is most classical works (see e.g. \cite{adn2, giaquintamodica1982, huystara2006, macha2011}).
\end{rmk}

\subsection{Higher-order regularity of the solutions of the generalized Stokes problem}
\label{subsechighreggenStokes}

For higher-order regularity, the result of \cite[Theorem 4.2 with $k>0$]{macha2011} is not optimal, requiring too much regularity from $\sfA$ than we can afford for the well-posedness of the associated evolutionary problem \eqref{LiueulerB}. (The result \cite[Theorem 4.2 with $k>0$]{macha2011} requires $\sfA\in W^{k+1,\infty}(\Omega)$, besides $\bbf \in H^k(\Omega)^3$, to have $\bv\in H^{k+2}(\Omega)^3$ and $p\in H^{k+1}(\Omega)$). Similarly in the works \cite{giaquintamodica1982, huystara2006}.) 

The fundamental work of Agmon, Douglis and Niremberg \cite{adn2} also requires too much regularity from $\sfA$ (see the condition just before Theorem 10.5, in page 78, of \cite{adn2}, in which it is required that $\sfA\in \Ccal^{l-s_i}(\bar\Omega)$, with $l$ equals to our $k$, and where $s_i$ can be chosen, in our case, to be $s_1=s_2=s_3=0$ and $s_4=-1$, so that we need at least some coefficients of $\sfA$ to be $\Ccal^{k+1}$, and others to be $\Ccal^k$, to obtain the desired higher-order regularity for $\bv$ and $p$). 

It is worth mentioning the work of Solonnikov \cite{solonnikov2001}, which treats a general Stokes-type problem that includes ours, but only addresses H\"older-continuity, and the work of Ghidaglia \cite{ghidaglia1984}, which has a more direct approach to the classical Stokes problem, but does not apply to our system. 

For general and similar elliptic systems, but not of Stokes type, the works of Friedman \cite{friedman1969}, Lions and Magenes \cite{lionsmagenes1972} and Gilbarg and Trudinger \cite{gilbargtrudinger2001} have results that also assume more regularity from the coefficients of the linear term. 

Most of the works omit the proof of higher-order regularity, limiting themselves to say that the proof follows as in the $H^2$ regularity. The work \cite{friedman1969}, however, does give some details of the proof, and the reason for the regularity conditions becomes clear. The main reason for the high-order regularity assumption on coefficients of the linear term is that the theorems do not take into full consideration the space dimension of the system and the associated Sobolev embeddings. In our case, we exploit the fact that the space dimension is three in order to obtain a more forgiven condition on $\sfA$. See Remark \ref{dontneedsomuchregularityonA} for more discussion on this issue. With that in mind, we present, in Theorem \ref{hkplustworegularity} below, a result assuming less regularity on the coefficients than the previously mentioned works, and which will be perfect for our needs. 

\begin{thm}[Higher-order regularity]
  \label{hkplustworegularity}
    Let $\Omega$ be a bounded domain in $\RR^3$ with a boundary of class $\Ccal^{k+2}$ and suppose $\sfA\in  W^{1,\infty}(\Omega)^{3\times 3} \cap W^{2,3}(\Omega)^{3\times 3} \cap H^{k+1}(\Omega)^{3\times 3}$ is a uniformly positive definite symmetric tensor on $\Omega$ and $\bbf \in H^k(\Omega)^3$, where $k\in \NN$. Then, the solution $(\bv, p)$ of the weak formulation \eqref{weakgeneralizedStokeswithpressure} of the generalized Stokes problem is such that $\bv \in V(\Omega)\cap H^{k+2}(\Omega)$ and $p\in H^{k+1}(\Omega)$. Moreover, $(\bv,p)$ solves the generalized Stokes problem \eqref{generalizedStokes} almost everywhere in $\Omega$, with the first two equations in \eqref{generalizedStokes} holding in $H^{k-1}(\Omega)^3$ and $H^k(\Omega)$, respectively, and with the last equation holding for the trace of $\bv$ on $\partial\Omega$, in the space $H^{k+1/2}(\partial \Omega)$. Finally, for $k=1$, we have the estimates
   \begin{multline} 
  \label{h3egularityforv}
  \|D^3 \bv\|_{L^2(\Omega)} \leq \frac{c_3(\Omega)}{\alpha} \left(\|\bbf\|_{H^1_{\lambda_1}(\Omega)}  \right. \\
    \left.  + \frac{1}{\alpha}\left(\|\sfA\|_{W^{1,\infty}_{\lambda_1}(\Omega)} + \|D^2\sfA\|_{L^3(\Omega)}\right)\left( \|\bbf\|_{L^2(\Omega)} + \frac{1}{\alpha}\|\sfA\|_{W^{1,\infty}_{\lambda_1}(\Omega)}\|\bbf\|_{H^{-1}(\Omega)}\right)\right),
   \end{multline}    
    and
    \begin{multline}
      \label{h2egularityforp}
      \|D^2p\|_{L^2(\Omega)} \leq \frac{c_3(\Omega)}{\alpha}\|\sfA\|_{L^\infty(\Omega)} \left(\|\bbf\|_{H^1_{\lambda_1}(\Omega)}  \right. \\
     \left. + \frac{1}{\alpha}\left(\|\sfA\|_{W^{1,\infty}_{\lambda_1}(\Omega)} + \|D^2\sfA\|_{L^3(\Omega)}\right)\left( \|\bbf\|_{L^2(\Omega)} + \frac{1}{\alpha}\|\sfA\|_{W^{1,\infty}_{\lambda_1}(\Omega)}\|\bbf\|_{H^{-1}(\Omega)}\right) \right),
    \end{multline}     
 while, for any integer $k\geq 2$,
 \begin{equation} 
  \label{hkplus2regularityforv}
  \|D^{k+2} \bv\|_{L^2(\Omega)} \leq \frac{c_{k+2}(\Omega)}{\alpha}R_k(\alpha, \lambda_1, \sfA, \bbf),
\end{equation}    
    and
    \begin{equation}
      \label{hkplus1regularityforp}
      \|D^{k+1}p\|_{L^2(\Omega)} \leq \frac{c_{k+2}(\Omega)}{\alpha}\|\sfA\|_{L^\infty(\Omega)}R_k(\alpha, \lambda_1, \sfA, \bbf),
    \end{equation}
    where 
    \begin{equation}
      \label{Rkdef}
      \begin{aligned}
      R_k(\alpha, & \lambda_1, \sfA, \bbf) = \|\bbf\|_{H^k_{\lambda_1}(\Omega)} + \sum_{j=2}^{k-1} \left( \sum_{i=1}^{k-j} \frac{1}{\alpha^{(k-j)/i}}\frac{1}{\lambda_1^{(k-j)/4i}}\|\sfA\|_{H^{i+2}_{\lambda_1}(\Omega)}^{(k-j)/i}\right)  \|\bbf\|_{H^j_{\lambda_1}(\Omega)}  \\
   &  + \left( \sum_{i=1}^{k-1} \frac{1}{\alpha^{(k-1)/i}}\frac{1}{\lambda_1^{(k-1)/4i}}\|\sfA\|_{H^{i+2}_{\lambda_1}(\Omega)}^{(k-1)/i}\right)\left( \|\bbf\|_{H^1_{\lambda_1}(\Omega)} \right. \\
  & \quad \quad + \frac{1}{\alpha}\left( \|\sfA\|_{W^{1,\infty}_{\lambda_1}(\Omega)} + \|D^2\sfA\|_{L^3(\Omega)}\right)\left(\|\bbf\|_{L^2(\Omega)} + \frac{1}{\alpha}\|\sfA\|_{W^{1,\infty}_{\lambda_1}(\Omega)} \|\bbf\|_{H^{-1}(\Omega)}\right),
    \end{aligned}
    \end{equation}
    and the $c_{k+2}(\Omega)$, for $k\geq 1$, are non-dimensional shape constants.
\end{thm}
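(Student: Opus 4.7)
The plan is to proceed by induction on $k$, with the base case $k=0$ being Theorem \ref{h2regularity}. For the inductive step, assume the result up to order $k$ and prove it at order $k+1$ (i.e.\ show $\bv\in H^{k+2}(\Omega)^3$, $p\in H^{k+1}(\Omega)$ when $\sfA\in W^{1,\infty}\cap W^{2,3}\cap H^{k+1}$ and $\bbf\in H^k$). Rather than differentiating the equation $k+1$ times directly (which would require testing with objects that a priori we do not know to be regular enough and that do not vanish at $\partial\Omega$), I would follow the finite-difference approach outlined in the remark after Theorem \ref{h2regularity}: introduce a partition of unity $\{\eta_m\}_{m=0}^{M}$ subordinate to a covering of $\overline{\Omega}$ by one interior open set $U_0$ and boundary patches $U_1,\ldots,U_M$, each $U_m$, $m\geq 1$, straightened into a hemisphere $B^-$ via a $\mathcal{C}^{k+2}$ diffeomorphism $\bvarphi_m$ as in the proof of Theorem \ref{h2regularity}. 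In each patch, test the localized (and, for boundary patches, rectified) weak formulation with iterated finite-difference quotients $\delta_i^{-h}(\eta_m^2\,\delta_i^h D^\bgamma \tilde\bv)$ for multi-indices $\bgamma$ with $|\bgamma|=k$ and, in boundary patches, $i\in\{1,2\}$ (tangential directions). Using the coercivity \eqref{probgenStokesunifellip} proved in Proposition \ref{probgenStokesisunifelliptic}, one obtains $L^2$ bounds on $\eta_m\,\delta_i^h D^\bgamma\bnabla\bv$ uniform in $h$; passing $h\to 0$ and using \eqref{finitedifferenceintestimate}--\eqref{finitedifferencebdryestimate} yields tangential regularity. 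The normal derivative is recovered algebraically from the equation \eqref{generalizedStokes} solved for the top-order normal term, exactly as in the proof of Theorem \ref{h2regularity}.

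The reason this can be done under substantially weaker hypotheses on $\sfA$ than in the results of \cite{macha2011, giaquintamodica1982, adn2} is that we can exploit dimension-specific Sobolev embeddings. After testing and integrating by parts, the new quantity to control is the $H^{-1}$ norm of a Leibniz expansion
\begin{equation*}
D^{\bgamma}\bigl(\sfA\,\sfD(\bv)\bigr) = \sum_{\bbeta\leq\bgamma}\binom{\bgamma}{\bbeta} D^\bbeta\sfA\cdot D^{\bgamma-\bbeta}\sfD(\bv),\qquad |\bgamma|=k,
\end{equation*}
and analogous expressions for the symmetric term $\sfD(\bv)\sfA$. Each summand is estimated by a Hölder triple exploiting that, in three dimensions, $H^2\hookrightarrow L^\infty$ and $H^1\hookrightarrow L^6$: summands with $|\bbeta|\leq k-1$ put $D^\bbeta\sfA$ in $L^\infty$ via $\sfA\in H^{k+1}\hookrightarrow W^{k-1,\infty}$; the summand $|\bbeta|=k$ pairs $D^k\sfA\in L^6$ with a derivative of $\bv$ in $L^3$ via \eqref{l3inequality}; and the top summand $|\bbeta|=k+1$ pairs $D^{k+1}\sfA\in L^2$ with a derivative of $\bv$ in $L^\infty$ via \eqref{agmoninequality} (for $k\geq 2$). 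For $k=1$, the borderline term $|\bbeta|=2$ is controlled using precisely the extra hypothesis $D^2\sfA\in L^3$ paired with $\bnabla\bv\in L^6$, which is why \eqref{h3egularityforv}--\eqref{h2egularityforp} display the factor $\|D^2\sfA\|_{L^3}$ and no $H^{k+1}$ term of $\sfA$.

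Having closed the $L^2$ estimate for $D^{k+2}\bv$ (and, from the equation, for $\bnabla D^k p$ after using Proposition \ref{regpressurel2}-type arguments for the pressure), the next step is to reduce the resulting right-hand side, which initially contains mixed norms of $\sfA$ and intermediate Sobolev norms of $\bv$, to the form $R_k(\alpha,\lambda_1,\sfA,\bbf)$. Here the interpolation inequality \eqref{interpolationineq} is applied to $\|\sfA\|_{H^{i+2}_{\lambda_1}}$ with $2\leq i+2\leq k+1$, producing the sums $\sum_{i=1}^{k-j}\alpha^{-(k-j)/i}\lambda_1^{-(k-j)/4i}\|\sfA\|_{H^{i+2}_{\lambda_1}}^{(k-j)/i}$ visible in \eqref{Rkdef}. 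The intermediate Sobolev norms $\|\bv\|_{H^{j+2}_{\lambda_1}}$, $j<k$, that appear multiplying derivatives of $\sfA$ are absorbed recursively using the inductive estimate at order $j$; this recursion is what produces the nested structure of $R_k$ and the final multiplicative constant $c_{k+2}(\Omega)/\alpha$.

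The main obstacle will be purely combinatorial-analytic bookkeeping: organising the Leibniz sums, the Hölder/Sobolev estimates, and the recursive interpolation in such a way that the final estimate has exactly the form \eqref{hkplus2regularityforv}--\eqref{Rkdef} with non-dimensional shape constants $c_{k+2}(\Omega)$ and the correct powers of $\alpha$ and $\lambda_1$. Care is needed at the boundary patches, where the rectification introduces extra factors involving $D^j\bvarphi_m$ up to order $k+2$; verifying that these are absorbed into $c_{k+2}(\Omega)$ (using that $\partial\Omega\in\mathcal{C}^{k+2}$ and normalising so that the $\bvarphi_m$ and their inverses have derivatives bounded independently of $m$, as in \cite{adn2}) is the step where dimensional correctness and shape-constant independence must be carefully tracked. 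The pressure estimates \eqref{h2egularityforp} and \eqref{hkplus1regularityforp} then follow by isolating $\bnabla p$ in \eqref{generalizedStokes}, distributing $D^k$, and applying the product estimates just described.
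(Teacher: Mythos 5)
Your proposal follows essentially the same route as the paper's proof: induction on $k$, localization and rectification, finite-difference quotients in tangential directions with the normal derivatives recovered algebraically from the equation, the Leibniz/recursive term (the paper's $\bb_\bgamma$) estimated via the three-dimensional embeddings $H^2\hookrightarrow L^\infty$, $H^1\hookrightarrow L^6$ and the $L^3$--$L^6$ pairing that makes $W^{2,3}$ the right borderline hypothesis for $k=1$, and a recursion closed by interpolation to produce $R_k$. The one piece you defer as ``bookkeeping'' is in fact the longest part of the paper's argument --- the exact solution of the recursion \eqref{vrecursion} via the combinatorial constants in \eqref{recursioninequalitycomb}--\eqref{combinatorics} followed by Young's inequality \eqref{combinatorialtermestimate} --- but you have correctly identified it and the rest of your outline matches the paper's strategy.
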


\begin{proof}
  The proof is obtained in a way similar to the proof of the estimates obtained in Theorem \ref{h2regularity} for the regularity of $(\bv,p)$ in $H^2(\Omega)\times H^1(\Omega)$. The difference being that, since we do not have yet the desired regularity, we are not allowed to take the full higher-order derivative in the equation \eqref{generalizedStokes}. Instead we use the classical technique of \cite{adn2} of estimating the finite differences, which are then showed to converge to the desired higher-order derivative, obtaining the desired bounds along the way.
  
  We argue by induction that, for a given $k\in\NN_0$, the regularity has been obtained up to $(\bv,p)\in (V(\Omega)\cap H^{k+1}(\Omega)^3) \times H^k(\Omega)$, and we intend to prove that it holds in the Sobolev spaces of order $k+2$ and $k+1$, respectively. The case $k=0$ has been proved in Theorem \ref{h2regularity}, so we continue here with the induction process for $k\in \NN$. We need to show that $\bv\in H^{k+2}$ and $p\in H^{k+1}$.
  
  By taking the derivative of \eqref{generalizedStokes} up to order $k$, we find the equations
\begin{equation}
  \label{generalizedStokesforderivativegamma}
   \begin{cases}
      - \bnabla \cdot (\sfD(\bv_\bgamma)\sfA + \sfA\sfD(\bv_\bgamma)) - \bb_\bgamma + \bnabla p_\bgamma =  \bbf_\bgamma, & \text{in } \Omega,\\
      \bnabla \cdot \bv_\bgamma = 0, & \text{in } \Omega,
    \end{cases}
\end{equation}
for any multi-index $\bgamma\in \NN_0^3$ with $|\bgamma|=k$, where 
\[ \bv_\bgamma = D^\bgamma\bv, \qquad p_\bgamma = D^\bgamma p, \qquad \bbf_\bgamma = D^\bgamma \bbf,
\]
and, by recursion,
\begin{equation}
  \label{bgammadef}
  \bb_\bgamma = \partial_{x_j}\bb_{\bgamma'} + \bB(\partial_{x_j}\sfA,\bv_{\bgamma'}) = \partial_{x_j}\bb_{\bgamma'} + \bnabla \cdot (\sfD(\bv_{\bgamma'})(\partial_{x_j}\sfA) + (\partial_{x_j}\sfA)\sfD(\bv_{\bgamma'})),
\end{equation}
where $\bgamma'\in \NN_0^3$ is another multi-index, one-degree lower than $\bgamma$, given by $\bgamma = \bgamma' + \be_j$, with the convention that $\bb_{\bgamma'} = 0$, if $|\bgamma'|=0$. Since we are assuming that $(\bv,p)\in V(\Omega)\cap H^{k+1}(\Omega)^3 \times H^k(\Omega)$, the first equation in \eqref{generalizedStokesforderivativegamma} holds in $H^{-1}(\Omega)^3$ and the divergence-free equation in \eqref{generalizedStokesforderivativegamma} holds in $L^2(\Omega)$.
  
If we were allowed to take one more derivative of the equation \eqref{generalizedStokesforderivativegamma}, we would see that
\begin{equation}
  \label{generalizedStokesforhighorderderivative}
   \begin{cases}
      - \bnabla \cdot (\sfD(\partial_{x_i}\bv_\bgamma)\sfA + \sfA\sfD(\partial_{x_i}\bv_\bgamma)) - \bnabla \cdot (\sfD(\bv_\bgamma)(\partial_{x_i}\sfA) + (\partial_{x_i}\sfA)\sfD(\bv_\bgamma)) - \partial_{x_i} \bb_\bgamma \\ 
      \qquad \qquad \qquad + \bnabla (\partial_{x_i}p_\bgamma)  = \partial_{x_i}\bbf_\bgamma, & \text{in } \Omega, \\
      \bnabla \cdot (\partial_{x_i}\bv_\bgamma) = 0, & \text{in } \Omega,
    \end{cases}
\end{equation}  
for any $i=1,2,3$. The equation \eqref{generalizedStokesforhighorderderivative} has the same form of the equation \eqref{generalizedStokesforderivative} used for the $H^2$ estimate of $\bv$, if viewing the additional term $\partial_{x_i}\bb_\bgamma$ as an extra forcing term. Naturally, the $H^{k+2}$ estimate would follow a similar argument, by localization of the equation on the interior and on patches of the boundary and rectification of the boundary. The estimates in the interior for the second-order derivatives of $D^\bgamma\bv$ would be essentially the same as those for the second-order derivatives of $\bv$, by testing the localized equation with $\eta_0\partial_{x_i}\bv_\bgamma$. The estimates on the boundary would change slightly, because $\bv_\bgamma$ does not vanish on the boundary. Nevertheless, any combination of tangential derivatives vanishes on the boundary and that would be sufficient. 

However, we do not have yet sufficient regularity to derive \eqref{generalizedStokesforhighorderderivative}, so we must also prove this regularity.  The idea is to work with \eqref{generalizedStokesforderivativegamma} and use the finite-difference operators defined in \eqref{finitedifference} to obtain $H^1$-estimates for $\eta_m\delta_i^h\bv_\bgamma$, uniformly with respect to $h$, and then, at the limit as $h\rightarrow 0$, arrive at the regularity and an estimate for the partial derivative $\eta_m\partial_{x_i}\bv_\bgamma$ of $\bv_\bgamma$ and for $\eta_m\partial_{x_i}p_\bgamma$. This is done for all the derivatives in the interior and for all the combinations of tangential derivatives near the boundary. The derivatives near the boundary that contain normal directions are obtained directly from the equation, by induction on the degree of normal derivatives (see below). Then, by combining all the partitions, we obtain a global estimate for $\bnabla\bv_\bgamma$ and $\bnabla p_\bgamma$, proving, along the way, that those derivatives have indeed the desired regularity.

More precisely, we consider a covering of $\Omega$ by open sets $\{U_m\}_{m=0}^M$, with $U_0$ in the interior of $\Omega$ and the remaining open sets covering the boundary, as done in the proof of Theorem \ref{h2regularity}, along with a partition of unity by functions $\{\eta_m\}_{m=0}^M$, subordinated to this covering. 

For the interior regularity, we test the weak formulation \eqref{generalizedStokesforderivativegamma} with the function $\bw = \delta_i^{-h}(\eta_0^2 \delta_i^h\bv_\bgamma)$, which does vanish on the boundary, and in fact it vanishes outside $U_0$, so that, after integrating by parts the leading order term and the pressure term, we find
  \begin{multline}
    \label{weakgeneralizedStokeswithpressureinteriorgammadifference} 
    \int_{U_0} (\sfD(\bv_\bgamma)\sfA + \sfA\sfD(\bv_\bgamma)) : \bnabla (\delta_i^{-h}(\eta_0^2\delta_i^h\bv_\bgamma)) \;\rd \bx - \int_{U_0}  \bb_\bgamma \cdot (\delta_i^{-h}(\eta_0^2\delta_i^h\bv_\bgamma)) \;\rd \bx\\
    - \int_{U_0}  p_\bgamma \bnabla \cdot (\delta_i^{-h}(\eta_0^2\delta_i^h\bv_\bgamma)) \;\rd \bx = \int_{U_0} \bbf_\bgamma \cdot (\delta_i^{-h}(\eta_0^2\delta_i^h\bv_\bgamma)) \;\rd \bx.
  \end{multline}
By changing variables we rewrite the first term in the form
\[ \int_{U_0} (\sfD(\bv_\bgamma)\sfA + \sfA\sfD(\bv_\bgamma)) : \bnabla (\delta_i^{-h}(\eta_0^2\delta_i^h\bv_\bgamma)) \;\rd \bx = \int_{U_0} \delta_i^h(\sfD(\bv_\bgamma)\sfA + \sfA\sfD(\bv_\bgamma)) : \bnabla (\eta_0^2\delta_i^h\bv_\bgamma) \;\rd \bx.
\]

The product rule works for difference operators just like it does for derivatives:
  \begin{equation}
    \label{finitedifferenceonlinearpart}
    \delta_i^h(\sfD(\bv_\bgamma)\sfA + \sfA\sfD(\bv_\bgamma))  = \sfD(\delta_i^h\bv_\bgamma)\sfA +\sfA\sfD(\delta_i^h\bv_\bgamma) + \sfD(\bv_\bgamma)(\delta_i^h\sfA) + (\delta_i^h\sfA)\sfD(\bv_\bgamma).
  \end{equation}
Moreover, we can write
\begin{equation}  
    \label{breakingetasquare} 
    \bnabla(\eta_0^2\delta_i^h\bv_\bgamma) = \eta_0(\bnabla\eta_0)\otimes (\delta_i^h\bv_\bgamma) + \eta_0 \bnabla(\eta_0\delta_i^h\bv_\bgamma),
\end{equation}
and
\begin{equation}
    \label{breakingDetadelta}
    \sfD(\eta_0\delta_i^h\bv_\bgamma) = \frac{1}{2}(\bnabla (\eta_0\delta_i^h\bv_\bgamma) + (\bnabla (\eta_0\delta_i^h\bv_\bgamma))^\tr) = \sfS((\bnabla\eta_0) \otimes (\delta_i^h\bv_\bgamma)) + \eta_0\sfD(\delta_i^h\bv_\bgamma),
  \end{equation}
  where $\sfS(\cdot)$ is the symmetrization operator defined in \eqref{defopS}. Thus, we rewrite the leading order term as
  \begin{equation}
    \label{lineartermdeltaihvgamma}
 \begin{aligned}
     \int_{U_0} & \delta_i^h(\sfD(\bv_\bgamma)\sfA + \sfA\sfD(\bv_\bgamma)) : \bnabla (\eta_0^2\delta_i^h\bv_\bgamma) \;\rd \bx \\
       & = \int_{U_0} (\sfD(\delta_i^h\bv_\bgamma)\sfA +\sfA\sfD(\delta_i^h\bv_\bgamma)) : \bnabla (\eta_0^2\delta_i^h\bv_\bgamma) \;\rd \bx \\
       & \qquad + \int_{U_0} (\sfD(\bv_\bgamma)(\delta_i^h\sfA) + (\delta_i^h\sfA)\sfD(\bv_\bgamma)) : \bnabla (\eta_0^2\delta_i^h\bv_\bgamma) \;\rd \bx \\
  & = \int_{U_0} \eta_0(\sfD(\delta_i^h\bv_\bgamma)\sfA +\sfA\sfD(\delta_i^h\bv_\bgamma)) : \left((\bnabla\eta_0)\otimes (\eta_0\delta_i^h\bv_\bgamma) + \bnabla (\eta_0\delta_i^h\bv_\bgamma) \right)\;\rd \bx \\
   & \qquad    + \int_{U_0} (\sfD(\bv_\bgamma)(\delta_i^h\sfA) + (\delta_i^h\sfA)\sfD(\bv_\bgamma)) : \bnabla (\eta_0^2\delta_i^h\bv_\bgamma) \;\rd \bx \\
  & = \int_{U_0} (\sfD(\eta_0\delta_i^h\bv_\bgamma)\sfA +\sfA\sfD(\eta_0\delta_i^h\bv_\bgamma)) : \left((\bnabla\eta_0)\otimes (\eta_0\delta_i^h\bv_\bgamma) + \bnabla (\eta_0\delta_i^h\bv_\bgamma) \right)\;\rd \bx \\
   & \qquad   - \int_{U_0} (\sfS((\bnabla\eta_0) \otimes (\delta_i^h\bv_\bgamma))\sfA +\sfA\sfS((\bnabla\eta_0) \otimes (\delta_i^h\bv_\bgamma))) : \left((\bnabla\eta_0)\otimes (\eta_0\delta_i^h\bv_\bgamma) + \bnabla (\eta_0\delta_i^h\bv_\bgamma) \right)\;\rd \bx \\
   & \qquad  + \int_{U_0} (\sfD(\bv_\bgamma)(\delta_i^h\sfA) + (\delta_i^h\sfA)\sfD(\bv_\bgamma)) : \bnabla (\eta_0^2\delta_i^h\bv_\bgamma) \;\rd \bx.
\end{aligned}
\end{equation}  

Since $\bv_\bgamma$ is divergence free, we rewrite the pressure term in \eqref{weakgeneralizedStokeswithpressureinteriorgammadifference} as
\[ -\int_{U_0}  p_\bgamma \bnabla \cdot (\delta_i^{-h}(\eta_0^2\delta_i^h\bv_\bgamma)) \;\rd \bx = - \int_{U_0}  p_\bgamma (\delta_i^{-h}(2\eta_0(\bnabla\eta_0)\cdot(\delta_i^h\bv_\bgamma))) \;\rd \bx.
\]
Thus, \eqref{weakgeneralizedStokeswithpressureinteriorgammadifference} becomes
\begin{equation}
\label{weakgeneralizedStokeswithpressureinteriorgammadifferenceb}
\begin{aligned}
  \int_{U_0} & (\sfD(\eta_0\delta_i^h\bv_\bgamma)\sfA +\sfA\sfD(\eta_0\delta_i^h\bv_\bgamma)) : \left( \bnabla (\eta_0\delta_i^h\bv_\bgamma) \right)\;\rd \bx \\
  & = \int_{U_0}  \bb_\bgamma \cdot (\delta_i^{-h}(\eta_0^2\delta_i^h\bv_\bgamma)) \;\rd \bx \\
  & \qquad + \int_{U_0} (\sfS((\bnabla\eta_0) \otimes (\delta_i^h\bv_\bgamma))\sfA +\sfA\sfS((\bnabla\eta_0) \otimes (\delta_i^h\bv_\bgamma))) : \left(\bnabla (\eta_0\delta_i^h\bv_\bgamma) \right)\;\rd \bx \\ 
   & \qquad   - \int_{U_0} (\sfD(\eta_0\delta_i^h\bv_\bgamma)\sfA +\sfA\sfD(\eta_0\delta_i^h\bv_\bgamma)) : \left((\bnabla\eta_0)\otimes (\eta_0\delta_i^h\bv_\bgamma) \right)\;\rd \bx  \\
   & \qquad   + \int_{U_0} (\sfS((\bnabla\eta_0) \otimes (\delta_i^h\bv_\bgamma))\sfA +\sfA\sfS((\bnabla\eta_0) \otimes (\delta_i^h\bv_\bgamma))) : \left((\bnabla\eta_0)\otimes (\eta_0\delta_i^h\bv_\bgamma) \right)\;\rd \bx \\
   & \qquad  - \int_{U_0} (\sfD(\bv_\bgamma)(\delta_i^h\sfA) + (\delta_i^h\sfA)\sfD(\bv_\bgamma)) : \bnabla (\eta_0^2\delta_i^h\bv_\bgamma) \;\rd \bx \\
   & \qquad + 2 \int_{U_0}  p_\bgamma (\delta_i^{-h}((\bnabla\eta_0)\cdot(\eta_0\delta_i^h\bv_\bgamma))) \;\rd \bx \\
   & \qquad + \int_{U_0} \bbf_\bgamma \cdot (\delta_i^{-h}(\eta_0^2\delta_i^h\bv_\bgamma)) \;\rd \bx.
\end{aligned}
\end{equation}
Notice how similar \eqref{weakgeneralizedStokeswithpressureinteriorgammadifferenceb} is to \eqref{weakgeneralizedStokeswithpressureinteriorb}, with $\bv$, $\bbf$ and $p$ replaced by $\bv_\bgamma$, $\bbf_\bgamma$ and $p_\bgamma$, the derivative $\partial_{x_i}$ replaced by the finite difference $\delta_i^h$, and the addition of the term with $\bb_\bgamma$. Using \eqref{finitedifferenceintestimate} and \eqref{finitedifferenceintestimateinfty}, we estimate \eqref{weakgeneralizedStokeswithpressureinteriorgammadifferenceb} just like \eqref{weakgeneralizedStokeswithpressureinteriorb}, except that we need to handle the extra term $\bb_\bgamma$. 

The idea is to consider $\bb_\bgamma$ as an extra forcing term. Once $\bb_\bgamma$ belongs to $L^2(\Omega)^3$, it can be treated just like $\bbf_\bgamma$. The fact that $\bb_\bgamma\in L^2(\Omega)^3$ will be discussed further on. 

Then, estimating the terms in \eqref{weakgeneralizedStokeswithpressureinteriorgammadifferenceb} like we did for \eqref{weakgeneralizedStokeswithpressureinteriorb}, we arrive at an interior estimate analogous to \eqref{interiorestimateh2c},
\begin{multline}
  \label{interiorestimatehkplus2a}
  \|\bnabla(\eta_0\delta_i^h\bv_\bgamma)\|_{L^2(\Omega)}^2 \leq \frac{c(\Omega)}{\alpha^2}\left( \|\bbf_\bgamma\|_{L^2(\Omega)}^2 + \|\bb_\bgamma\|_{L^2(\Omega)}^2 + \lambda_1\|p_\bgamma\|_{L^2(\Omega)}^2 \right. \\
  \left. + \left(\lambda_1\|\sfA\|_{L^\infty(\Omega)}^2 + \|\bnabla\sfA\|_{L^\infty(\Omega)}^2\right)\|\bnabla \bv_\bgamma\|_{L^2(\Omega)}^2\right),
\end{multline}
for $i=1,2,3$, where $c(\Omega)$ is a shape constant. With $\bbf_\bgamma=D^\bgamma\bbf$ and $\bb_\bgamma$ in $L^2(\Omega)^3$, $\sfA\in W^{1,\infty}(\Omega)^{3\times 3}$, and $\bv_\bgamma = D^\bgamma\bv \in H^1(\Omega)^3$, the right hand side above is bounded and independent of $h$. Thus, $\eta_0\delta_i^h\bv_\bgamma$ has a subsequence that converges weakly in $H^1(\Omega)^3$. Since, on the other hand, $\bv_\bgamma$ belongs to $H^1(\Omega)$ and, hence, $\delta_i^h\bv$ converges weakly to $\partial_{x_i}\bv$ in $L^2_\loc(\Omega)^3$, then $\eta_0\delta_i^h\bv_\bgamma$ must converge in $H^1(\Omega)^3$ precisely to $\eta_0\partial_{x_i}\bv_\bgamma$, with $\eta_0\partial_{x_i}\bv_\bgamma\in H^1(\Omega)^3$. Since this applies to any $i=1,2,3$ and $\eta_0$ is of class $\Ccal^2$, we find that $\eta_0\bv_\bgamma\in H^2(\Omega)^3$ and, passing to the limit as $h\rightarrow 0$ in \eqref{interiorestimatehkplus2a}, we find that
\begin{multline}
  \label{interiorestimatehkplus2b}
  \|D^2(\eta_0\bv_\bgamma)\|_{L^2(\Omega)}^2 \leq \frac{c(\Omega)}{\alpha^2}\left( \|\bbf_\bgamma\|_{L^2(\Omega)}^2 + \|\bb_\bgamma\|_{L^2(\Omega)}^2 + \lambda_1\|p_\bgamma\|_{L^2(\Omega)}^2 \right. \\
  \left. + \left(\lambda_1\|\sfA\|_{L^\infty(\Omega)}^2 + \|\bnabla\sfA\|_{L^\infty(\Omega)}^2\right)\|\bnabla \bv_\bgamma\|_{L^2(\Omega)}^2\right).
\end{multline}
  
  The estimates on the boundary are a bit more involved than in the case of low-order regularity. The difficulty is that, here, $\bv_\bgamma$ may not vanish on the boundary. Recall that in Theorem \ref{h2regularity} we have, instead of $\bv_\bgamma$, the vector field $\bv$, which does vanish on the boundary. In that case, we could take any tangential derivative of $\bv$ and still have it vanishing on the boundary.   Here, this procedure needs to be slightly modified, as follows.
  
  In general, the vector field $\bv_\bgamma$ does not vanish on the boundary because $\bgamma$ may contain directions which are not tangential to $\partial \Omega$. Fortunately, it suffices to start by allowing $\bgamma$ to contain only tangential derivatives. More precisely, if a multi-index $\bgamma$ only contains tangential directions (say $\bgamma\in \{1,2\}^k$ in the case the boundary has been rectified to $x_3=0$), then $\tilde\bv_\bgamma$ vanishes on the boundary. Hence, if $i=1$ or $2$, the finite difference $\delta_i^h\tilde\bv_\bgamma$ vanishes on the boundary, as well. Thus, by testing the localized and rectified equation against $\delta_i^{-h}(\eta_m\delta_i^h\tilde\bv_\bgamma)$, we obtain an $H^1$-estimate for $\eta_m\delta_i^h\tilde\bv_\bgamma$. This means that we obtain $L^2$ estimates for all the $(k+2)$-order derivatives with at most one derivative in the normal direction (the one coming from the $H^1$ norm). This leaves out all the $(k+2)$-order derivatives which have two or more derivatives in the normal direction. 
  
  Luckily, they can all be estimated directly from the localized and rectified version of the equation \eqref{generalizedStokesforderivativegamma}. This is done step by step. With $\tilde\bv_\bgamma$ having only tangential derivatives, we now obtain, directly and explicitly from the localized and rectified version of \eqref{generalizedStokesforderivativegamma}, an estimate for the second-order derivative of $\tilde\bv_\bgamma$ in the normal direction in terms of all the previously estimated second-order derivatives of $\bv_\bgamma$ with at most one normal direction. This gives an estimate for all the $(k+2)$-order derivatives with at most two derivatives in the normal direction. Then we allow $\tilde\bv_\bgamma$ to have at most one derivative in the normal direction and look again at \eqref{generalizedStokesforderivativegamma} to obtain estimates for all the $(k+2)$-order derivatives with at most three derivatives in the normal direction. We continue this way, until we find estimates for any $(k+2)$-order derivative. Coming back to the original variables, we obtain an estimate similar to \eqref{interiorestimatehkplus2a} and \eqref{estimateh2}, namely
\begin{multline}
  \label{estimateh2vgamma}
  \|D^2 (\eta_m\bv_\bgamma)\|_{L^2(\Omega)}^2 \leq \frac{c(\Omega)}{\alpha^2}\sum_{|\hat\bgamma|=k}\left( \|\bbf_{\hat\bgamma} + \bb_{\hat\bgamma}\|_{L^2(\Omega)}^2 + \lambda_1\|p_{\hat\bgamma}\|_{L^2(\Omega)}^2 \right. \\
  \left. +\left(\lambda_1\|\sfA\|_{L^\infty(\Omega)}^2 + \|\bnabla\sfA\|_{L^\infty(\Omega)}^2\right)\|D \bv_{\hat\bgamma}\|_{L^2(\Omega)}^2\right),
\end{multline}
where $c(\Omega)$ is a shape constant. Notice that, in the process described above, we need to consider all the multi-indices $\hat\bgamma$ of order $k$, i.e. all the lower-order derivatives, to obtain the desired bound. This is reflected in \eqref{estimateh2vgamma} in the summation over all the multi-indices with $|\hat\bgamma|=k$.

Combining all the localized estimates in the partition of unity we obtain a global bound for $\bv_\bgamma = \sum_m \eta_m \bv_\bgamma = \sum_m \eta_m D^\bgamma \bv$, for any $|\bgamma|=k$, which can be written as
\begin{multline}
  \label{estimatehkplus2}
  \|D^{k+2} \bv\|_{L^2(\Omega)}^2 \leq \frac{c(\Omega)}{\alpha^2}\left( \|D^k\bbf\|_{L^2(\Omega)}^2 + \sum_{|\bgamma|=k}\|\bb_\bgamma\|_{L^2(\Omega)}^2 + \lambda_1\|D^k p\|_{L^2(\Omega)}^2 \right. \\
    \left. +\left(\lambda_1\|\sfA\|_{L^\infty(\Omega)}^2 + \|\bnabla\sfA\|_{L^\infty(\Omega)}^2\right)\|D^{k+1} \bv\|_{L^2(\Omega)}^2\right),
\end{multline}
where $c(\Omega)$ is another shape constant.    
    
Now it is time to estimate $D^k p$ and $\bb_\bgamma$, with $|\bgamma|=k$, for $k\geq 1$. In the case $k=1$,  we have $\bgamma = \be_1$, $\be_2$, or $\be_3$, and, moreover, $\bb_{\bgamma'} = 0$ and $\bv_{\bgamma'} = \bv$. Thus, we have
\[ \bb_{\be_j} = \bB(\partial_{x_j}\sfA,\bv) = \bnabla \cdot (\sfD(\bv)(\partial_{x_j}\sfA) + (\partial_{x_j}\sfA)\sfD(\bv)).
\]
We estimate each $\bb_{\be_j}$ using the inequality \eqref{l6inequality} and the assumption that $\sfA\in W^{1,\infty}(\Omega)^{3\times 3}\cap W^{2,3}(\Omega)^{3\times 3}$, so that
\begin{multline}
 \label{estimatebbejinl2}
  \|\bb_{\be_j}\|_{L^2(\Omega)} \leq  c\left(\|\bnabla\sfA\|_{L^\infty(\Omega)}\|D^2 \bv\|_{L^2(\Omega)} + \|D^2\sfA\|_{L^3(\Omega)}\|\bnabla \bv\|_{L^6(\Omega)}\right)  \\
      \leq  c(\Omega)\left(\|\bnabla\sfA\|_{L^\infty(\Omega)} + \|D^2\sfA\|_{L^3(\Omega)}\right)\|D^2 \bv\|_{L^2(\Omega)},
\end{multline} 
for $j=1,2,3,$ and another shape constant $c(\Omega)$.  For $k\geq 2$, we estimate $\bb_\bgamma$ recursively,
  \begin{align*}
    \|\bb_\bgamma\|_{L^2(\Omega)} & \leq \|D \bb_{\bgamma'}\|_{L^2(\Omega)} + c\|D^2 \sfA\|_{L^3(\Omega)}\|D^k\bv\|_{L^6(\Omega)} + c \|\bnabla \sfA\|_{L^\infty(\Omega)}\|D^{k+1}\bv\|_{L^2(\Omega)}\\ 
    & \leq \|D^2\bb_{\bgamma''}\|_{L^2(\Omega)} + c \|D^3 \sfA\|_{L^2(\Omega)}\|D^{k-1}\bv\|_{L^\infty(\Omega)}\\
    & \qquad \qquad\qquad + c\|D^2 \sfA\|_{L^3(\Omega)}\|D^k\bv\|_{L^6(\Omega)} + c \|\bnabla \sfA\|_{L^\infty(\Omega)}\|D^{k+1}\bv\|_{L^2(\Omega)} \\
    & \leq \ldots \leq c\|D^{k+1}\sfA\|_{L^2(\Omega)}\|\bnabla \bv\|_{L^\infty(\Omega)} +  ... + c \|D^3 \sfA\|_{L^2(\Omega)}\|D^{k-1}\bv\|_{L^\infty(\Omega)}\\
    & \qquad  \qquad \qquad  + c\|D^2 \sfA\|_{L^3(\Omega)}\|D^k\bv\|_{L^6(\Omega)} + c \|\bnabla \sfA\|_{L^\infty(\Omega)}\|D^{k+1}\bv\|_{L^2(\Omega)},
  \end{align*}  
  where $\bgamma''$ is a multi-index one-degree lower than $\bgamma'$, and so on. This can be written in the form
  \begin{multline*}
    \|\bb_\bgamma\|_{L^2(\Omega)} \leq c\sum_{j=1}^{k-1} \|D^{k+2-j}\sfA\|_{L^2(\Omega)}\|D^{j}\bv\|_{L^\infty(\Omega)} \\
    + c\|D^2 \sfA\|_{L^3(\Omega)}\|D^k\bv\|_{L^6(\Omega)} + c \|\bnabla \sfA\|_{L^\infty(\Omega)}\|D^{k+1}\bv\|_{L^2(\Omega)}.
  \end{multline*}
  
  Using  the Agmon-type inequality \eqref{agmoninequality} and the inequality \eqref{l6inequality}, we see that
  \begin{multline}
    \label{estimatebgamma}
    \|\bb_\bgamma\|_{L^2(\Omega)} \leq  \frac{c(\Omega)}{\lambda_1^{1/4}}\sum_{j=1}^{k-1} \|D^{k+2-j}\sfA\|_{L^2(\Omega)}\|D^{j+2}\bv\|_{L^2(\Omega)} \\
    + c(\Omega)\left(\|D^2 \sfA\|_{L^3(\Omega)}+ \|\bnabla \sfA\|_{L^\infty(\Omega)}\right)\|D^{k+1}\bv\|_{L^2(\Omega)}.
  \end{multline}
  This is valid for any multi-index $\bgamma$ of order $k\geq 2$. In fact, formula \eqref{estimatebgamma} is valid even for $k=1$, since in this case the summation is empty and the estimate reduces to \eqref{estimatebbejinl2}. 
    
  Using estimate \eqref{estimatebgamma}, inequality \eqref{estimatehkplus2} yields
  \begin{multline}
  \label{estimatehkplus2b}
  \|D^{k+2} \bv\|_{L^2(\Omega)}^2 \leq \frac{c(\Omega)}{\alpha^2}\left( \|D^k\bbf\|_{L^2(\Omega)}^2 + \frac{c(\Omega)}{\lambda_1^{1/2}}\sum_{j=1}^{k-1} \|D^{k+2-j}\sfA\|_{L^2(\Omega)}^2\|D^{j+2}\bv\|_{L^2(\Omega)}^2 \right. \\
  \left. + \lambda_1\|D^k p\|_{L^2(\Omega)}^2 + \left(\|D^2 \sfA\|_{L^3(\Omega)}^2 + \|\sfA\|_{W^{1,\infty}_{\lambda_1}(\Omega)}^2\right)\|D^{k+1} \bv\|_{L^2(\Omega)}^2\right),
\end{multline}
  for $k\geq 1$.

The pressure term in \eqref{estimatehkplus2b} can be obtained by taking the derivative of \eqref{generalizedStokes} up to order $k-1$ (one degree lower than \eqref{generalizedStokesforderivativegamma}, i.e. with $\bgamma$ replaced by $\bgamma'$, with $|\bgamma'|=k-1$). In doing so, we find equations for $D^k p$ valid in $L^2(\Omega)^3$, which yield an estimate analogous to \eqref{basicpestimatefromStokes}, namely
\begin{multline}
  \label{estimatesdkplusoneofp}
  \|D^k p\|_{L^2(\Omega)} \leq \|D^{k-1}\bbf\|_{L^2(\Omega)} + \sum_{|\bgamma'|=k-1} \|\bb_{\bgamma'}\|_{L^2(\Omega)} + c\|\sfA\|_{L^\infty(\Omega)}\|D^{k+1}\bv\|_{L^2(\Omega)} \\
   + c\|\bnabla \sfA\|_{L^\infty(\Omega)}\|D^k\bv\|_{L^2(\Omega)}.
\end{multline}
Using the Poincar\'e inequality \eqref{poincareineq} and the estimate \eqref{estimatebgamma} with $\bgamma'$ instead of $\bgamma$ yields
\begin{multline}
  \label{estimatesdkplusoneofpb}
  \|D^k p\|_{L^2(\Omega)} \leq \|D^{k-1}\bbf\|_{L^2(\Omega)} + \frac{c(\Omega)}{\lambda_1^{1/4}}\sum_{j=1}^{k-2} \|D^{k+1-j}\sfA\|_{L^2(\Omega)}\|D^{j+2}\bv\|_{L^2(\Omega)} \\
    + c(\Omega)\left(\|D^2 \sfA\|_{L^3(\Omega)}+ \|\bnabla \sfA\|_{L^\infty(\Omega)}\right)\|D^k\bv\|_{L^2(\Omega)} + c\|\sfA\|_{L^\infty(\Omega)}\|D^{k+1}\bv\|_{L^2(\Omega)}.
\end{multline}
Inserting \eqref{estimatesdkplusoneofpb} into \eqref{estimatehkplus2b} and using the corresponding estimate \eqref{estimatebgamma} with $\bgamma'$ instead of $\bgamma$ and using the Poincar\'e inequality \eqref{poincareineq} yields
\begin{multline}
  \label{estimatehkplus2c}
  \|D^{k+2} \bv\|_{L^2(\Omega)}^2 \leq \frac{c(\Omega)}{\alpha^2}\left(\|\bbf\|_{H^k_{\lambda_1}(\Omega)}^2 + \frac{1}{\lambda_1^{1/2}}\sum_{j=1}^{k-1} \|\sfA\|_{H^{k+2-j}_{\lambda_1}(\Omega)}^2\|D^{j+2}\bv\|_{L^2(\Omega)}^2 \right. \\
  \left. + \left(\|D^2 \sfA\|_{L^3(\Omega)}^2 + \|\sfA\|_{W^{1,\infty}_{\lambda_1}(\Omega)}^2\right)\|D^{k+1} \bv\|_{L^2(\Omega)}^2\right).
\end{multline}
Due to the Sobolev embeddings in our three-dimensional space, we treat the cases $k=1$ and $k\geq 2$ differently. For $k=1$, we find, from \eqref{estimatehkplus2c}, that
\begin{equation}
  \label{estimateh3b}
   \|D^3 \bv\|_{L^2(\Omega)}^2 \leq \frac{c(\Omega)}{\alpha^2}\left(\|\bbf\|_{H^1_{\lambda_1}(\Omega)}^2 + \left(\|D^2 \sfA\|_{L^3(\Omega)}^2 + \|\sfA\|_{W^{1,\infty}_{\lambda_1}(\Omega)}^2\right)\|D^2 \bv\|_{L^2(\Omega)}^2\right).
\end{equation}
Estimate \eqref{estimateh3b} together with the estimate \eqref{h2regularityforh2v} for $D^2 \bv$ yields
\begin{multline}
    \label{estimateh3c}   
        \|D^3\bv\|_{L^2(\Omega)}^2 \leq \frac{c(\Omega)}{\alpha^2}\left( \|\bbf\|_{H^1_{\lambda_1}(\Omega)}^2  \right. \\
    \left. + \frac{1}{\alpha^2}\left(\|\sfA\|_{W^{1,\infty}_{\lambda_1}(\Omega)}^2 + \|D^2\sfA\|_{L^3(\Omega)}^2\right)\left( \|\bbf\|_{L^2(\Omega)}^2 + \frac{1}{\alpha^2}\|\sfA\|_{W^{1,\infty}_{\lambda_1}(\Omega)}^2\|\bbf\|_{H^{-1}(\Omega)}^2\right)\right),
      \end{multline}
which proves \eqref{hkplus2regularityforv} in the case $k=1$. Using \eqref{estimatesdkplusoneofpb} with $k=2$ together with \eqref{estimateh3b} we find that
\begin{multline}
    \label{h2regularityforp}
    \|D^2 p\|_{L^2(\Omega)} \leq \frac{c(\Omega)}{\alpha}\|\sfA\|_{L^\infty(\Omega)}\left( \|\bbf\|_{H^1_{\lambda_1}(\Omega)}  \right. \\
    \left. + \frac{1}{\alpha}\left(\|\sfA\|_{W^{1,\infty}_{\lambda_1}(\Omega)} + \|D^2\sfA\|_{L^3(\Omega)}\right)\left( \|\bbf\|_{L^2(\Omega)} + \frac{1}{\alpha}\|\sfA\|_{W^{1,\infty}_{\lambda_1}(\Omega)}\|\bbf\|_{H^{-1}(\Omega)}\right)\right),
\end{multline}
which proves the estimate \eqref{hkplus1regularityforp} for the pressure in the case $k=1$.

In the case $k\geq 2$, since $H^3(\Omega)$ is included in both $W^{2,3}(\Omega)$ and $W^{1,\infty}(\Omega)$, the last term in \eqref{estimatehkplus2c} is bounded by the term with $j=k-1$ in the summation. Thus, we rewrite \eqref{estimatehkplus2c} simply as
\begin{equation}
  \label{estimatehkplus2d}
  \|D^{k+2} \bv\|_{L^2(\Omega)}^2 \leq \frac{c(\Omega)}{\alpha^2}\left(\|\bbf\|_{H^k_{\lambda_1}(\Omega)}^2 + \frac{1}{\lambda_1^{1/2}}\sum_{j=1}^{k-1} \|\sfA\|_{H^{k+2-j}_{\lambda_1}(\Omega)}^2\|D^{j+2}\bv\|_{L^2(\Omega)}^2 \right). 
\end{equation}
for a different constant $c(\Omega)$.  Estimate \eqref{estimatehkplus2d} is of the form
\begin{equation}
  \label{vrecursion}
   v_{k+2} \leq c(\Omega)\left(d_k +  \sum_{j=1}^{k-1} a_{k+2-j}v_{j+2}\right),
\end{equation}
for $k\geq 2$, where 
\[ v_j = \|D^j\bv\|_{L^2(\Omega)}^2, \quad d_j = \frac{1}{\alpha^2} \|\bbf\|_{H^j_{\lambda_1}(\Omega)}^2, \quad a_j = \frac{1}{\alpha^2}\frac{1}{\lambda_1^{1/2}}\|\sfA\|_{H^j_{\lambda_1}(\Omega)}^2,
\]
for $j\geq 2$. We include the case $k=1$ in \eqref{vrecursion} by setting $d_1 = v_3$. 
Then, we show by induction that
\begin{equation}
  \label{recursioninequalitycomb}
  v_{k+2} \leq c(\Omega) \left(d_k + \sum_{j=1}^{k-1} \left( \sum_{\substack{\alpha_1, \ldots, \alpha_{k-j}\in \NN_0 \\ \alpha_1 + \ldots + (k-j)\alpha_{k-j} = k-j}} c_{\alpha_1,\ldots,\alpha_{k-j}}^k a_3^{\alpha_1}\cdots a_{k-j+2}^{\alpha_{k-j}}\right)d_j \right),
\end{equation}
for $k\geq 1$, where $c_{\alpha_1,\ldots,\alpha_{k-j}}^k$ are suitable numerical constants given recursively by
\begin{align}
  \label{combinatoricconstants1} 
  & c_{0,\ldots, 0, 1}^k = 1, \\
  \label{combinatoricconstants2} 
  & c_{\alpha_1,\ldots,\alpha_{k-j -1},0}^k = \sum_{\ell=j+1}^{k-1} \sum_{\substack{\tilde\alpha_1, \ldots, \tilde\alpha_{\ell-j}\in \NN_0 \\ \tilde\alpha_1 + \ldots + (\ell-j)\tilde\alpha_{\ell-j} = \ell-j}} c_{\tilde\alpha_1,\ldots,\tilde\alpha_{\ell-j}}^\ell,
\end{align}
for all $k\geq 1$ and $j=1,\ldots, k-1$. Moreover, in the formula \eqref{combinatoricconstants2}, the exponents $\alpha_1, \ldots, \alpha_{k-j-1}$ are related to $\ell$ and $\tilde\alpha_1, \ldots, \tilde\alpha_{\ell-j}$ by the (one-to-one) correspondences
\begin{equation}
  \label{alpharelations}
  \alpha_m = \begin{cases} 
      \tilde\alpha_m, & 1 \leq m \leq \ell-j, \; m\neq k-\ell, \\
      \tilde\alpha_{k-\ell} + 1, &  m = k-\ell, \text{ if } k-\ell \leq \ell - j, \\
      1, &  m = k-\ell, \text{ if } k-\ell > \ell - j, \\
      0, &  \ell-j < m \leq k-j, \; m\neq k-\ell,
    \end{cases}
\end{equation}
and
\begin{equation}
  \label{alpharelations2}
  \tilde\alpha_m = \begin{cases} 
      \alpha_m, & 1 \leq m \leq \ell-j, \; m\neq k-\ell, \\
      \alpha_{k-\ell} - 1, &  m = k-\ell, \text{ if } k-\ell \leq \ell - j. \\
    \end{cases}
\end{equation}
More precisely, for $k\geq 1$ and $j=1, \ldots, k-1$, the exponents $\tilde\alpha_1, \ldots, \tilde\alpha_{\ell-j}\in \NN_0$ satisfy the criteria $\tilde\alpha_1 + \ldots + (\ell-j)\tilde\alpha_{\ell-j} = \ell-j$, for $\ell\in \{j+1,\ldots, k_1\}$, if, and only if, the exponents $\alpha_1,\ldots \alpha_{k-j-1}\in \NN_0$ satisfy the criteria $\alpha_1 + \ldots + (k-j-1)\alpha_{k-j-1} = k-j$..

Indeed, this is trivially true for $k=1$. For $k\geq 2$, assuming it is true up to $k-1$, we find
\begin{align*}
 v_{k+2} & \leq c(\Omega)\left(d_k  \right. \\
   &  \qquad  \left. + \sum_{j=1}^{k-1} a_{k+2-j}\left(d_j + \sum_{\ell=1}^{j-1} \left( \sum_{\substack{\tilde\alpha_1, \ldots, \tilde\alpha_{j-\ell}\in \NN_0 \\ \tilde\alpha_1 + \ldots + (j-\ell)\tilde\alpha_{j-\ell} = j-\ell}} c_{\tilde\alpha_1,\ldots,\tilde\alpha_{j-\ell}}^ja_3^{\tilde\alpha_1}\cdots a_{j-\ell+2}^{\tilde\alpha_{j-\ell}}\right)d_\ell\right)\right) \\
   & = c(\Omega)\left(d_k +  \sum_{j=1}^{k-1} a_{k+2-j}d_j \right. \\
    & \qquad  \qquad \left. + \sum_{\ell=1}^{k-1} \sum_{j=\ell+1}^{k-1} a_{k+2-j} \left( \sum_{\substack{\tilde\alpha_1, \ldots, \tilde\alpha_{j-\ell}\in \NN_0 \\ \tilde\alpha_1 + \ldots + (j-\ell)\tilde\alpha_{j-\ell} = j-\ell}} c_{\tilde\alpha_1,\ldots,\tilde\alpha_{j-\ell}}^j a_3^{\tilde\alpha_1}\cdots a_{j-\ell+2}^{\tilde\alpha_{j-\ell}}\right)d_\ell\right) \\
   & = c(\Omega)\left(d_k +  \sum_{j=1}^{k-1} a_{k+2-j}d_j \right. \\
    & \qquad  \qquad \left. + \sum_{j=1}^{k-1} \sum_{\ell=j+1}^{k-1} a_{k+2-\ell} \left( \sum_{\substack{\tilde\alpha_1, \ldots, \tilde\alpha_{\ell-j}\in \NN_0 \\ \tilde\alpha_1 + \ldots + (\ell-j)\tilde\alpha_{\ell-j} = \ell-j}} c_{\tilde\alpha_1,\ldots,\tilde\alpha_{\ell-j}}^\ell a_3^{\tilde\alpha_1}\cdots a_{\ell-j+2}^{\tilde\alpha_{\ell-j}}\right)d_j\right).
\end{align*}
Putting $d_j$ in evidence we rewrite the estimate as
\begin{multline}
  \label{recursioninequalitycombpre}
  v_{k+2} \leq c(\Omega)d_k \\
   + c(\Omega)\sum_{j=1}^{k-1} \left(a_{k+2-j} + \sum_{\ell=j+1}^{k-1} a_{k+2-\ell} \left( \sum_{\substack{\tilde\alpha_1, \ldots, \tilde\alpha_{\ell-j}\in \NN_0 \\ \tilde\alpha_1 + \ldots + (\ell-j)\tilde\alpha_{\ell-j} = \ell-j}} c_{\tilde\alpha_1,\ldots,\tilde\alpha_{\ell-j}}^\ell a_3^{\tilde\alpha_1}\cdots a_{\ell-j+2}^{\tilde\alpha_{\ell-j}}\right)\right)d_j.
\end{multline}
We argue that
\begin{multline}
  \label{combinatorics}
  a_{k+2-j} + \sum_{\ell=j+1}^{k-1} a_{k+2-\ell} \left( \sum_{\substack{\tilde\alpha_1, \ldots, \tilde\alpha_{\ell-j}\in \NN_0 \\ \tilde\alpha_1 + \ldots + (\ell-j)\tilde\alpha_{\ell-j} = \ell-j}} c_{\tilde\alpha_1,\ldots,\tilde\alpha_{\ell-j}}^\ell a_3^{\tilde\alpha_1}\cdots a_{\ell-j+2}^{\tilde\alpha_{\ell-j}}\right) \\
  = \sum_{\substack{\alpha_1, \ldots, \alpha_{k-j}\in \NN_0 \\ \alpha_1 + \ldots + (k-j)\alpha_{k-j} = k-j}} c_{\alpha_1,\ldots,\alpha_{k-j}}^ka_3^{\alpha_1}\cdots a_{k-j+2}^{\alpha_{k-j}},
\end{multline}
with the numerical constants satisfying \eqref{combinatoricconstants1} and \eqref{combinatoricconstants2} and the exponents satisfying \eqref{alpharelations} and \eqref{alpharelations2}.

Indeed, the first term $a_{k+2-j}$ is in the summation term in the right hand side with $\alpha_{k+j} = 1$ and the remaining exponents $\alpha_\ell =0$, $1\leq \ell < k-j$, with the associated numerical constant being 
\[ c_{0,\ldots, 0, 1}^k=1. 
\]
This is the only possible combination that contains the ``highest-order'' term $a_{k+2-j}$. 

For $\ell = j+1, \ldots, k-1$ and any combination $\tilde\alpha_1, \ldots, \tilde\alpha_{\ell-j}\in \NN_0$ with $\tilde\alpha_1 + \ldots + (\ell-j)\tilde\alpha_{\ell-j} = \ell-j$, we have
\[ \tilde\alpha_1 + \ldots + (\ell-j)\tilde\alpha_{\ell-j} + (k-\ell) = \ell-j + k-\ell = k-j.
\]
Notice that $k-\ell$ may, or may not, be less than $\ell-j$. Thus, we take
\begin{equation}
  \label{alphaaddone}
  \tilde\alpha_{k-\ell} = \begin{cases} \alpha_{k-\ell}+1, & \text{if } k-\ell \leq \ell-j, \\ 1, & \text{ otherwise}, \end{cases}
\end{equation}
and keep the remaining exponents the same, i.e. $\alpha_m = \tilde\alpha_m$ if $m\leq \ell-j$, $m\neq k-\ell$, and $\alpha_m = 0$, if $\ell-j < m \leq k-1$, $m\neq k-\ell$. In doing so, the exponents $\alpha_1, \ldots \alpha_{k-j}$ satisfy
\[ \alpha_1 + \ldots + (k-j)\alpha_{k-j} = k-j, 
\]
with $\alpha_{k-j}=0$. Hence, this combination of exponents is in the right hand side as well, and, from \eqref{combinatorics}, we see that the associated constant $c_{\alpha_1,\ldots, \alpha_{k-j}}^k$ is given according to \eqref{combinatoricconstants2}.

On the other hand, if the exponents $\alpha_1, \ldots, \alpha_{k-j}\in \NN_0$ satisfy $\alpha_1 + \ldots + (k-j)\alpha_{k-j} = k-j$, then either $\alpha_{k-j}=1$ or $0$. If $\alpha_{k-j}=1$, the remaining exponents are zero, so that this term is in the left hand side of \eqref{combinatorics} and the associated numerical constant is $c_{0,\ldots,1}=1$. 

In the case $\alpha_{k-j} = 0$, then $\alpha_{k-\ell} \neq 0$, for one or more $j + 1 \leq \ell \leq k-1$. For any such $\ell$, we see that
\[ \alpha_1 + \ldots + (k-\ell)(\alpha_{k-\ell}-1)  = k-j - (k-\ell) = \ell - j.
\]
Thus, we define $\tilde\alpha_m = \alpha_m$, for $1\leq m \leq \ell-j$, with $m\neq k-\ell$, and $\tilde\alpha_m = \alpha_m-1$, if $m=k-\ell\leq \ell-j$ (if $k-\ell> \ell-j$, there is no need to define the corresponding $\tilde\alpha_m$; see \eqref{alpharelations2}). In doing so, the exponents $\tilde\alpha_1, \ldots, \tilde\alpha_{\ell-j}$ satisfy the condition to be in the summation term in the left hand side of \eqref{combinatorics}. 

This completes the proof of \eqref{combinatorics}, with the numerical constant satisfying \eqref{combinatoricconstants1} and \eqref{combinatoricconstants2} and the exponents satisfying \eqref{alpharelations} and \eqref{alpharelations2}. Combining \eqref{recursioninequalitycombpre} and \eqref{combinatorics} we deduce \eqref{recursioninequalitycomb}.

The estimate \eqref{recursioninequalitycomb} is the sharpest we can get from \eqref{vrecursion}. But in order to have a more readable and manageable inequality, we proceed to estimate the summation terms in \eqref{recursioninequalitycomb}. We divide the combinatorial condition $\sum_{i=1}^{k-j} i\alpha_i = k-j$ by $k-j$ to see that
\[ \sum_{i=1}^{k-j} \frac{i\alpha_i}{k-j} = 1,
\]
with $0 \leq i\alpha_i/(k-j) \leq 1$, for all $i=1,\ldots, k-j$. Using Young's inequality, we find that
\begin{equation}
  \label{combinatorialtermestimate}
  \prod_{i=1}^{k-j} a_{i+2}^{\alpha_i} \leq \sum_{i=1}^{k-j} \frac{i\alpha_i}{k-j} a_{i+2}^{(k-j)/i} \leq \sum_{i=1}^{k-j} a_{i+2}^{(k-j)/i}.
\end{equation}
Now, applying estimate \eqref{combinatorialtermestimate} to \eqref{recursioninequalitycomb} gives us
\begin{equation}
  \label{recursioninequalityest}
  v_{k+2} \leq c(\Omega) \left(d_k + c_k \sum_{j=1}^{k-1} \left( \sum_{i=1}^{k-j} a_{i+2}^{(k-j)/i}\right)d_j \right),
\end{equation}
where
\begin{equation}
  \label{defck}
  c_k = \sum_{\substack{\alpha_1, \ldots, \alpha_{k-j}\in \NN_0 \\ \alpha_1 + \ldots + (k-j)\alpha_{k-j} = k-j}} c_{\alpha_1,\ldots,\alpha_{k-j}}^k.
\end{equation}
Going back to the definitions of $v_j$, $d_j$ and $a_j$, not forgetting that $d_1=v_3$, we find that
\begin{multline}
  \label{recursioninequalityestfull}
  \|D^{k+2}\bv\|_{L^2(\Omega)}^2 \leq c_k(\Omega) \left( \frac{1}{\alpha^2}\|\bbf\|_{H^k_{\lambda_1}(\Omega)}^2 + \frac{1}{\alpha^2}\sum_{j=2}^{k-1} \left( \sum_{i=1}^{k-j} \frac{1}{\alpha^{2(k-j)/i}}\frac{1}{\lambda_1^{(k-j)/2i}}\|\sfA\|_{H^{i+2}_{\lambda_1}(\Omega)}^{2(k-j)/i}\right)  \|\bbf\|_{H^j_{\lambda_1}(\Omega)}^2 \right. \\
  \left. + \left( \sum_{i=1}^{k-1} \frac{1}{\alpha^{2(k-1)/i}}\frac{1}{\lambda_1^{(k-1)/2i}}\|\sfA\|_{H^{i+2}_{\lambda_1}(\Omega)}^{2(k-1)/i}\right)\|D^3\bv\|_{L^2(\Omega)}^2\right),
\end{multline}
for suitable non-dimensional shape constants $c_k(\Omega)$, for any $k\geq 2$. Using the estimate \eqref{estimateh3c} for $v_3$ we arrive at 
\begin{equation} 
  \label{hkplus2regularityforvfull}
  \begin{aligned}
   \|D^{k+2} & \bv\|_{L^2(\Omega)}^2 \leq \frac{c_k(\Omega)}{\alpha^2} \left( \|\bbf\|_{H^k_{\lambda_1}(\Omega)}^2 +  \sum_{j=2}^{k-1} \left( \sum_{i=1}^{k-j} \frac{1}{\alpha^{2(k-j)/i}}\frac{1}{\lambda_1^{(k-j)/2i}}\|\sfA\|_{H^{i+2}_{\lambda_1}(\Omega)}^{2(k-j)/i}\right)\|\bbf\|_{H^j_{\lambda_1}(\Omega)}^2 \right. \\
   & + \left( \sum_{i=1}^{k-1} \frac{1}{\alpha^{2(k-1)/i}}\frac{1}{\lambda_1^{(k-1)/2i}}\|\sfA\|_{H^{i+2}_{\lambda_1}(\Omega)}^{2(k-1)/i}\right)\left( \|\bbf\|_{H^1_{\lambda_1}(\Omega)}^2 \right. \\
  & \left. \quad \quad + \frac{1}{\alpha^2}\left( \|\sfA\|_{W^{1,\infty}_{\lambda_1}(\Omega)}^2 + \|D^2\sfA\|_{L^3(\Omega)}^2\right)\left(\|\bbf\|_{L^2(\Omega)}^2 + \frac{1}{\alpha^2}\|\sfA\|_{W^{1,\infty}_{\lambda_1}(\Omega)}^2 \|\bbf\|_{H^{-1}(\Omega)}^2\right)\right).
  \end{aligned}
\end{equation}  
This proves \eqref{Rkdef}. The estimate \eqref{hkplus1regularityforp} for the pressure follows from \eqref{estimatesdkplusoneofpb} and \eqref{hkplus2regularityforv}.
\end{proof}
\medskip

We end this section with some remarks. 


 \begin{rmk}
   The estimates \eqref{weaksolutionweakgeneralizedStokesestimaterepeat},  \eqref{h2regularityforh2v}, \eqref{h3egularityforv}, and \eqref{hkplus2regularityforv} for the velocity field (and similarly for the pressure estimates \eqref{weaksolutionweakgeneralizedStokesestimatewithpressure}, \eqref{h2regularityforh1p}, \eqref{hkplus2regularityforv}, and \eqref{hkplus1regularityforp}) can be put into a single expression of the form
   \begin{equation} 
     \label{hkplustworegularityshort}
     \|D^{k+2}\bv\|_{L^2(\Omega)} \leq \frac{c(\Omega)}{\alpha} \sum_{j=-1}^k \lambda_1^{(k-j)/2}a_{k,j}(\alpha,\lambda_1,\sfA) \|\bbf\|_{H^j(\Omega)},
   \end{equation}
   for all integers $k\geq -1$, where the coefficients $a_{k,j}$ are non-dimensional quantities. In the case $k\geq 2$, these quantities are given by
   \begin{equation}
   \begin{aligned}
      & a_{k,k}(\alpha,\lambda_1,\sfA) = 1, \\
      & a_{k,j}(\alpha,\lambda_1,\sfA) =  \sum_{i=1}^{k-j} \frac{1}{\alpha^{(k-j)/i}}\frac{1}{\lambda_1^{(2i+1)(k-j)/4i}}\|\sfA\|_{H^{i+2}_{\lambda_1}(\Omega)}^{(k-j)/i}, \quad 1 \leq j \leq k-1, \\
      & a_{k,0}(\alpha,\lambda_1,\sfA) = \frac{1}{\alpha\lambda_1^{1/2}} \left(\|\sfA\|_{W^{1,\infty}_{\lambda_1}(\Omega)} + \|D^2\sfA\|_{L^3(\Omega)}\right) a_{k,1}(\alpha,\lambda_1,\sfA), \\
     & \quad = \frac{1}{\alpha\lambda_1^{1/2}}\left(\|\sfA\|_{W^{1,\infty}_{\lambda_1}(\Omega)} + \|D^2\sfA\|_{L^3(\Omega)}\right)\sum_{i=1}^{k-1} \frac{1}{\alpha^{(k-j)/i}}\frac{1}{\lambda_1^{(2i+1)(k-j)/4i}}\|\sfA\|_{H^{i+2}_{\lambda_1}(\Omega)}^{(k-j)/i}, \\
     & a_{k,-1}(\alpha,\lambda_1,\sfA) = \frac{1}{\alpha\lambda_1^{1/2}}\|\sfA\|_{W^{1,\infty}_{\lambda_1}(\Omega)}a_{k,0}(\alpha,\lambda_1,\sfA),  \\
     & \quad =  \frac{1}{\alpha^2\lambda_1}\|\sfA\|_{W^{1,\infty}_{\lambda_1}(\Omega)}\left(\|\sfA\|_{W^{1,\infty}_{\lambda_1}(\Omega)} + \|D^2\sfA\|_{L^3(\Omega)}\right)\sum_{i=1}^{k-1} \frac{1}{\alpha^{(k-j)/i}}\frac{1}{\lambda_1^{(2i+1)(k-j)/4i}}\|\sfA\|_{H^{i+2}_{\lambda_1}(\Omega)}^{(k-j)/i},
   \end{aligned}
   \end{equation}
   while for $k=-1, 0, 1$, we have
   \begin{equation}
     \begin{aligned}
      & a_{-1,-1}(\alpha,\lambda_1,\sfA) = a_{0,0}(\alpha,\lambda_1,\sfA) = a_{1,1}(\alpha,\lambda_1,\sfA) = 1, \\
      & a_{0,-1}(\alpha,\lambda_1,\sfA) = \frac{1}{\alpha\lambda_1^{1/2}}\|\sfA\|_{W^{1,\infty}_{\lambda_1}(\Omega)}, \\
      & a_{1,0}(\alpha,\lambda_1,\sfA) = \frac{1}{\alpha\lambda_1^{1/2}}\left(\|\sfA\|_{W^{1,\infty}_{\lambda_1}(\Omega)} + \|D^2\sfA\|_{L^3(\Omega)}\right), \\
      & a_{1,-1}(\alpha,\lambda_1,\sfA) = \frac{1}{\alpha^2\lambda_1}\|\sfA\|_{W^{1,\infty}_{\lambda_1}(\Omega)}\left(\|\sfA\|_{W^{1,\infty}_{\lambda_1}(\Omega)} + \|D^2\sfA\|_{L^3(\Omega)}\right).
    \end{aligned}
  \end{equation}
  Notice, from \eqref{generalizedStokes}, that $\bbf$ has the physical dimension (scaling) of $\alpha \lambda_1\bv$, while $D^j$ has the physical dimension of $\lambda_1^{j/2}$, so it is interesting to rewrite \eqref{hkplustworegularityshort} as
   \begin{equation} 
     \label{hkplustworegularityshort2}
     \frac{1}{\lambda_1^{(k+2)/2}}\|D^{k+2}\bv\|_{L^2(\Omega)} \leq c(\Omega)\sum_{j=-1}^k a_{k,j}(\alpha,\lambda_1,\sfA) \frac{ \|\bbf\|_{H^j(\Omega)}}{\alpha \lambda_1^{(j+2)/2}},
   \end{equation}    
 \end{rmk} 
 
 \begin{rmk}
 We may use the interpolation inequality \eqref{interpolationineq} to slightly rewrite the summation in the right hand side of estimate \eqref{combinatorialtermestimate}, which ends up in the estimate \eqref{recursioninequalityest} and, consequently, in the final estimates \eqref{hkplus2regularityforv} and \eqref{hkplus1regularityforp} with \eqref{Rkdef}. Using \eqref{interpolationineq}, we see that (up to a multiplicative shape constant that will be accounted for at the end of the argument but which is omitted here for notational simplicity)
\[ a_{i+2} \leq a_3^{\theta_i}a_{k-j+2}^{1-\theta_i},
\]
where $\theta_i$ is given by
\begin{equation}
  \label{deftheta}
  i+2 = 3\theta_i + (1-\theta_i)(k-j+2).
\end{equation}
We use Young's inequality to write
\begin{equation}
  \label{aiafterinterpolationandyoung}
  a_{i+2}^{(k-j)/i} \leq a_3^{\theta_i (k-j)/i}a_{k-j+2}^{(1-\theta_i)(k-j)/i} \leq \frac{1}{p}a_3^{\theta_i (k-j)p/i} + \frac{1}{q}a_{k-j+2}^{(1-\theta_i)(k-j)q/i},
\end{equation}
for any $1\leq p, q \leq \infty$, with $p^{-1} + q^{-1} = 1$. Choosing $q$ such that 
\begin{equation}
  \label{youngdefq}
  \frac{(1-\theta_i)(k-j)q}{i} = 1,
\end{equation}
we find that
\[ \frac{1}{p} = 1 - \frac{(1-\theta_i)(k-j)}{i} = \frac{i - (1-\theta_i)(k-j)}{i}.
\]
Thus, one easily checks, using the value of $\theta_i$ given in \eqref{deftheta}, that
\begin{equation}
  \label{youngfindp}
  \frac{\theta_i (k-j)p}{i} = \frac{\theta_i (k-j)}{i - (1-\theta_i)(k-j)} = k-j.
\end{equation}
Combining \eqref{aiafterinterpolationandyoung}, \eqref{youngdefq} and \eqref{youngfindp} together, we see that
\[ a_{i+2}^{(k-j)/i} \leq a_3^{k-j} + a_{k-j+2}.
\]
Thus, the right hand side of \eqref{combinatorialtermestimate} is estimated as
\begin{equation}
  \label{prodtoyoungineq}
  \sum_{i=1}^{k-j} a_{i+2}^{(k-j)/i} \leq c(\Omega)(k-j) \left(a_3^{k-j} + a_{k-j}\right),
\end{equation}
where now we include the suitable constant $c(\Omega)$ coming from the interpolations. Applying estimate \eqref{prodtoyoungineq} to \eqref{recursioninequalitycomb} gives us
\begin{equation}
  \label{recursioninequalityestalt}
  v_{k+2} \leq c(\Omega) \left(d_k + kc_k \sum_{j=1}^{k-1} \left( a_3^{k-j} + a_{k-j}\right)d_j \right),
\end{equation}
for another suitable shape constant $c(\Omega)$, and where $c_k$ is given in \eqref{defck}. Going back to the definitions of $v_j$, $d_j$ and $a_j$, and using the estimate \eqref{estimateh3c} for $d_1=v_3$ we find that 
\begin{equation} 
  \label{hkplus2regularityforvfullalt}
  \begin{aligned}
   \|D^{k+2} & \bv\|_{L^2(\Omega)}^2 \leq \frac{c_k(\Omega)}{\alpha^2} \left( \|\bbf\|_{H^k_{\lambda_1}(\Omega)}^2 \right. \\
  & \left. + \sum_{j=2}^{k-1} \left( \frac{1}{\alpha^{2(k-j)}}\frac{1}{\lambda_1^{(k-j)/2}}\|\sfA\|_{H^3_{\lambda_1}(\Omega)}^{2(k-j)} +  \frac{1}{\alpha^2}\frac{1}{\lambda_1^{1/2}}\|\sfA\|_{H^{k-j}_{\lambda_1}(\Omega)}^2\right)  \|\bbf\|_{H^j_{\lambda_1}(\Omega)}^2 \right. \\
   & + \left( \frac{1}{\alpha^{2(k-1)}}\frac{1}{\lambda_1^{(k-1)/2}}\|\sfA\|_{H^3_{\lambda_1}(\Omega)}^{2(k-1)} +  \frac{1}{\alpha^2}\frac{1}{\lambda_1^{1/2}}\|\sfA\|_{H^{k-1}_{\lambda_1}(\Omega)}^2\right)\left( \|\bbf\|_{H^1_{\lambda_1}(\Omega)}^2 \right. \\
  & \left. \quad \quad + \frac{1}{\alpha^2}\left( \|\sfA\|_{W^{1,\infty}_{\lambda_1}(\Omega)}^2 + \|D^2\sfA\|_{L^3(\Omega)}^2\right)\left(\|\bbf\|_{L^2(\Omega)}^2 + \frac{1}{\alpha^2}\|\sfA\|_{W^{1,\infty}_{\lambda_1}(\Omega)}^2 \|\bbf\|_{H^{-1}(\Omega)}^2\right)\right),
  \end{aligned}
\end{equation}  
for suitable non-dimensional shape constants $c_k(\Omega)$, for any $k\geq 2$. A similar estimate holds for the pressure.
 \end{rmk}
 
\begin{rmk}  
   In the case $k\geq 2$, in Theorem \ref{hkplustworegularity}, solving explicitly the recurrence relation in the estimate \eqref{hkplus2regularityforv} for $D^{k+2}\bv$ is not a trivial matter. This relation is a so-called non-homogeneous linear recurrence relation and its solution depends on finding the roots of an intricate polynomial (see \cite{GreeneKnuth1982}). Luckily, we have found the exact solution for the recurrence relation and managed to estimate it in a suitable way (if \eqref{vrecursion} were an equality, than the equality in \eqref{recursioninequalitycomb} would be an exact solution).
\end{rmk}

\begin{rmk}
  \label{dontneedsomuchregularityonA}
  As discussed in the paragraph before Theorem \ref{hkplustworegularity}, all the works in the literature assume more regularity on the coefficients of the linear term of the equation than we need in our theorem. One can see the reason for that by perusing the proof of \cite[Theorem 15.1]{friedman1969} (see also \cite[Theorem 17.2]{friedman1969}). In \cite[Theorem 15.1]{friedman1969}, an equation of the form $\sum_{|\alpha|=m} a_\alpha D^\alpha u = f$ is considered, and the assumed regularity on $f$ and the coefficients $a_\alpha$ guarantee that the term $f_i = \partial_{x_i} f - A_i$ belongs to $H^{k-1}$, with $A_i$ defined in \cite[eq. (15.22)]{friedman1969} ($A_i$ contains derivatives of $a_\alpha$ and $u$, analogous to our term $\bb_\bgamma$, in \eqref{bgammadef}). Since $A_i$ contains terms that are products of the first-order derivatives of the solution $u$ with second-order derivatives of the coefficients of the linear term and of second-order derivatives of the solution $u$ with first-order derivatives of the coefficients of the linear term, the assumptions that $u\in H^{k+1}$ and that the coefficients are in $\Ccal^{k+1}$ guarantee that $f_i\in H^{k-1}$, in any dimensions. This, in turn, is used to bootstrap and show that $\partial_{x_i}u\in H^k$, hence $u\in H^{k+2}$. In our case, however, we work specifically in the three-dimensional case, so we can exploit some useful Sobolev embeddings, such as $H^{1/2}$ into $L^3$, $H^1$ into $L^6$, and $[H^1,H^2]_{1/2}$ into $L^\infty$. In doing so, we find that $f_i\in H^{k-1}$ even with the coefficients in $W^{1,\infty}\cap H^{k+1}$ (not necessarily in $W^{k+1,\infty}$). This is precisely what is done in the proof of Theorem \ref{hkplustworegularity}. Hence, we improve the classical result for any integer $k\geq 1$, in space dimension three. Analogous improvements can also be obtained in other dimensions.
\end{rmk}

\section{The viscoelastic Stokes problem}
\label{viscoelasticsec}
In this section, we apply the results of Section \ref{secgenstokes} to the viscoelastic Stokes problem \eqref{viscoelasticStokesorg} (or \eqref{viscoelasticStokes}). We first verify the conditions on $\sfB$ that guarantee that $\sfA = \Acal(\sfB)$, given according to \eqref{defAcal}, is a uniformly positive definite symmetric tensor, in the sense of Definition \ref{defuniformpositivedef}. Next we investigate under which regularity assumptions on $\sfB$ the tensor $\sfA=\Acal(\sfB)$ has the regularity conditions required in the results of Section \ref{secgenstokes}. Next we combine all these results to obtain the desired existence, uniqueness, and regularity properties of the viscoelastic Stokes problem.

\subsection{Conditions for ellipticity of the linear part of the viscoelastic Stokes problem}
\label{conditionsellipticity}

In applications, $\sfB$ is the left Cauchy-Green tensor $\sfF\sfF^\tr$, so we restrict our analysis to the case in which $\sfB$ is a positive definite symmetric tensor. We want to find conditions on $\mu_1$, $\mu_2$, $\mu_3$ and $\sfB$ for which $\Acal(\sfB)$, given by \eqref{defAcal}, is also a positive define symmetric tensor, which is one of the conditions needed in Theorems \ref{weaksolutionweakgeneralizedStokeswithpressure}, \ref{h2regularity} and \ref{hkplustworegularity}. It is clear that $\Acal(\sfB)$ is also symmetric, so we only need to investigate when it is positive definite.

In order to study this issue, it is natural to consider the real-valued functions
\begin{equation}
  \label{defg}
  g_\bx(\lambda) = \mu_1(\bx) + \mu_2(\bx)\lambda + \frac{\mu_3(\bx)}{\lambda}, \quad \text{for } \lambda > 0, \; \bx\in \Omega.
\end{equation}
Notice that if $\lambda$ is an eigenvalue of $\sfB(\bx)$, then $g_\bx(\lambda)$ is an eigenvalue of $\Acal(\sfB(\bx))$, with the same eigenspace. Hence, the positivity of $\Acal(\sfB(\bx))$ is directly related to the positivity of $g_\bx(\lambda)$, for every eigenvalue of $\sfB(\bx)$. For the sake of notational simplicity, given a symmetric matrix $\sfB$ in three dimensions, we denote by $\lambda_1(\sfB) \leq \lambda_2(\sfB) \leq \lambda_3(\sfB)$ the three eigenvalues of $\sfB$, in increasing order. Then, it is straightforward to deduce that the uniform positive definiteness of the tensor $\Acal(\sfB)$, in the sense of Definition \ref{defuniformpositivedef}, follows if, and only if, $g_\bx(\lambda_i(\sfB(\bx)))$ is strictly positive uniformly on $\Omega$, for every $i=1,2,3$.

We summarize the above facts in the following result. 
\begin{prop}
  \label{propessinfglambdacondition}
  Let $\sfB=\sfB(\bx)$ be a tensor on $\Omega$ and suppose that, for almost every $\bx$ in $\Omega$, $\sfB(\bx)$ is symmetric and positive definite with eigenvalues $\lambda_i(\sfB(\bx))>0$, $i=1,2,3$. Then, $\Acal(\sfB)$ is a symmetric tensor and the eigenvalues of $\Acal(\sfB(\bx))$ are given by $g_\bx(\lambda_i(\sfB(\bx))$, $i=1,2,3$, with the same corresponding eigenspaces of $\sfB(\bx)$. Moreover, $\sfB$ is $\Acal$-positive, in the sense of Definition \ref{defproper}, if, and only if, 
  \begin{equation}
    \label{essinfglambda}
    \alpha \define \essinf_{\bx\in \Omega} \left\{\min_{i=1,2,3} \left(\mu_1(\bx) + \mu_2(\bx)\lambda_i(\sfB(\bx)) + \frac{\mu_3(\bx)}{\lambda_i(\sfB(\bx))} \right)\right\} > 0.
  \end{equation}
  In this case, \eqref{unifellip} holds with $\sfA = \Acal(\sfB)$ and $\alpha$ as given in \eqref{essinfglambda}.
\end{prop}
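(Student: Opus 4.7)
The proof is essentially pointwise linear algebra combined with the definition of essential infimum, so the plan is fairly direct.

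First I would observe that $\Acal(\sfB)$ is symmetric pointwise, which is immediate from \eqref{defAcal} since $\sfI$, $\sfB(\bx)$, and $\sfB(\bx)^{-1}$ are all symmetric (noting that $\sfB(\bx)$ is invertible a.e.\ because its eigenvalues are strictly positive). Next I would fix $\bx\in\Omega$ at which $\sfB(\bx)$ is symmetric and positive definite and diagonalize: since $\sfB(\bx)$ is symmetric, the spectral theorem gives an orthonormal basis $\{\bv_1,\bv_2,\bv_3\}$ of $\RR^3$ such that $\sfB(\bx)\bv_i = \lambda_i(\sfB(\bx))\bv_i$, with $\lambda_i(\sfB(\bx))>0$. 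Then $\sfB(\bx)^{-1}\bv_i = \lambda_i(\sfB(\bx))^{-1}\bv_i$, and hence
\[
\Acal(\sfB(\bx))\bv_i = \left(\mu_1(\bx)+\mu_2(\bx)\lambda_i(\sfB(\bx))+\frac{\mu_3(\bx)}{\lambda_i(\sfB(\bx))}\right)\bv_i = g_\bx(\lambda_i(\sfB(\bx)))\, \bv_i,
\]
so $\Acal(\sfB(\bx))$ has the same eigenbasis as $\sfB(\bx)$ and its eigenvalues are precisely $g_\bx(\lambda_i(\sfB(\bx)))$, for $i=1,2,3$.

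Next I would characterize uniform positive definiteness via these eigenvalues. For any $\bxi\in \RR^3$, expanding $\bxi = \sum_i \xi_i \bv_i$ (with $|\bxi|^2 = \sum_i \xi_i^2$ by orthonormality) yields
\[
\Acal(\sfB(\bx))\bxi\cdot \bxi = \sum_{i=1}^{3} g_\bx(\lambda_i(\sfB(\bx)))\, \xi_i^2.
\]
Therefore, at the point $\bx$,
\[
\inf_{\bxi\neq 0}\frac{\Acal(\sfB(\bx))\bxi\cdot \bxi}{|\bxi|^2}=\min_{i=1,2,3} g_\bx(\lambda_i(\sfB(\bx))).
\]
This gives, pointwise a.e., the equivalence between the bound $\Acal(\sfB(\bx))\bxi\cdot\bxi\geq c(\bx)|\bxi|^2$ (for every $\bxi\in\RR^3$) and the inequality $c(\bx)\leq\min_i g_\bx(\lambda_i(\sfB(\bx)))$.

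Finally I would take the essential infimum over $\bx\in\Omega$ on both sides to translate the pointwise characterization into the uniform one. On one hand, if \eqref{unifellip} holds with $\sfA=\Acal(\sfB)$ for some $\alpha>0$, then $\min_i g_\bx(\lambda_i(\sfB(\bx)))\geq \alpha$ for a.e.\ $\bx$, so the quantity in \eqref{essinfglambda} is $\geq \alpha>0$. Conversely, if the essential infimum in \eqref{essinfglambda} is a positive number $\alpha$, then for a.e.\ $\bx$ and every $\bxi\in\RR^3$ we have $\Acal(\sfB(\bx))\bxi\cdot\bxi\geq \min_i g_\bx(\lambda_i(\sfB(\bx)))\,|\bxi|^2\geq \alpha |\bxi|^2$, which is exactly \eqref{unifellip} with the constant $\alpha$ from \eqref{essinfglambda}. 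This establishes the equivalence and the explicit value of $\alpha$ simultaneously. The only subtle point, and the one I would be careful about, is to ensure that the set of $\bx$'s at which the diagonalization and eigenvalue identification are carried out is of full measure (so that the essential infimum is taken over a set on which the identities actually hold); this follows from the standing a.e.\ assumptions on $\sfB$ together with measurability of the eigenvalue functions $\bx\mapsto \lambda_i(\sfB(\bx))$, which is a standard fact for symmetric-matrix-valued measurable maps.
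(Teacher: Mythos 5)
Your proof is correct and follows essentially the same route as the paper: the paper's (informal) argument preceding the proposition is exactly the spectral observation that $\Acal(\sfB(\bx))$ shares the eigenbasis of $\sfB(\bx)$ with eigenvalues $g_\bx(\lambda_i(\sfB(\bx)))$, followed by the Rayleigh-quotient characterization of uniform positive definiteness via the essential infimum of the minimal eigenvalue. Your write-up just fills in the details (including the measurability remark) that the paper leaves implicit.
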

\medskip

Typically, the parameters $\mu_1$, $\mu_2$, and $\mu_3$ may depend on some invariants of the left Cauchy-Green stress tensor $\sfB$ and/or on the temperature and other quantities, when such couplings are relevant. In some models, however, they are taken as constants. In the cases they are constant, we investigate in more details when the condition \eqref{essinfglambda} is satisfied. 

When these parameters do not change with $\bx$, the function in \eqref{defg} is the same for every $\bx$, i.e. $g=g_\bx$. Since $\sfB$ is a positive definite symmetric tensor, all its eigenvalues are positive, so, given a set of real-valued parameters $\mu_1, \mu_2, \mu_3$, we look for set of positive values of $\lambda$ for which $g$ is positive, i.e.
  \begin{equation}
    \label{defLambda}
     \Lambda_{\mu_1,\mu_2,\mu_3} = \left\{\lambda > 0; \;g(\lambda) = \mu_1 + \mu_2\lambda + \frac{\mu_3}{\lambda} > 0\right\}.
  \end{equation}

Since $g$ is a continuous functions, the following result is easy to obtain and its proof is omitted.
  
\begin{prop}
  \label{propmuscenarios}
  Let $\mu_1, \mu_2, \mu_3\in \RR$ be constants and let $\sfB$ be a positive definite symmetric tensor on a domain $\Omega\subset\RR^3$. Suppose there exists a set $\Lambda_0$ which is compactly included in the set $\Lambda_{\mu_1,\mu_2,\mu_3}$ defined in \eqref{defLambda} and such that $\lambda_i(\sfB(\bx))\in \Lambda_0$ almost everywhere on $\Omega$, for every $i=1, 2, 3$. Then, condition \eqref{essinfglambda} of Proposition \ref{propessinfglambdacondition} is fulfilled, and $\sfB$ is $\Acal$-positive in the sense of Definition \ref{defproper}.
\end{prop}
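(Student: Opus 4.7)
The plan is to convert the hypothesis of compact inclusion into a uniform positive lower bound via a short compactness argument. I would proceed in three steps.

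First, I would observe that the function $g(\lambda) = \mu_1 + \mu_2\lambda + \mu_3/\lambda$ is continuous on the open half-line $(0,\infty)$, so the set $\Lambda_{\mu_1,\mu_2,\mu_3} = \{\lambda > 0 : g(\lambda) > 0\}$, being the preimage of the open set $(0,\infty)$ under a continuous function, is open in $(0,\infty)$.

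Second, since $\Lambda_0$ is compactly included in $\Lambda_{\mu_1,\mu_2,\mu_3}$, its closure $\overline{\Lambda_0}$ is a compact subset of $\Lambda_{\mu_1,\mu_2,\mu_3}$, so in particular $g > 0$ pointwise on $\overline{\Lambda_0}$. By continuity of $g$ and compactness of $\overline{\Lambda_0}$, the function $g$ attains a positive minimum on $\overline{\Lambda_0}$, i.e.
\[
\alpha_0 \define \min_{\lambda \in \overline{\Lambda_0}} g(\lambda) > 0.
\]

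Third, I would use the hypothesis that $\lambda_i(\sfB(\bx)) \in \Lambda_0 \subset \overline{\Lambda_0}$ for almost every $\bx \in \Omega$ and for every $i=1,2,3$, which yields $g(\lambda_i(\sfB(\bx))) \geq \alpha_0$ for almost every $\bx$ and every $i$. Taking the minimum over $i$ and then the essential infimum over $\bx$ in \eqref{essinfglambda} gives $\alpha \geq \alpha_0 > 0$, which is precisely the condition in Proposition \ref{propessinfglambdacondition}. That proposition then delivers the conclusion that $\sfB$ is $\Acal$-positive in the sense of Definition \ref{defproper}. There is no substantive obstacle to overcome: the argument is pure compactness plus continuity, which is presumably why the authors elected to omit it.
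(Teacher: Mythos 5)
Your argument is correct and is precisely the continuity-plus-compactness argument the authors allude to when they write that the result ``is easy to obtain and its proof is omitted'': openness of $\Lambda_{\mu_1,\mu_2,\mu_3}$, a positive minimum of $g$ on the compact closure $\overline{\Lambda_0}\subset\Lambda_{\mu_1,\mu_2,\mu_3}$, and then the pointwise bound $g(\lambda_i(\sfB(\bx)))\geq\alpha_0$ yielding \eqref{essinfglambda}. Nothing is missing.
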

\medskip

  The set  $\Lambda_{\mu_1,\mu_2,\mu_3}$ can be explicitly and easily characterized in terms of the parameters. For that purpose, it is useful to introduce the polynomial $p(\lambda) = \lambda g(\lambda) = \mu_2\lambda^2 + \mu_1\lambda +\mu_3$, which has the same sign as $g$ for $\lambda$ positive. The roots of $p$ are real when $\mu_1^2 \geq 4\mu_2\mu_3$ and are given by
  \begin{equation}
    \label{proots}
    \lambda_{\pm} = \frac{-\mu_1 \pm \sqrt{\mu_1^2 - 4\mu_2\mu_3}}{2\mu_2}.
  \end{equation}
  
   One fundamental condition on the parameters comes when considering the state of the material with no displacement, in which case $\sfF$ is the identity tensor, and so is $\sfB$. Hence, all the eigenvalues of $\sfB(\bx)$ are equal to one, with $g(1)= \mu_1+\mu_2+\mu_3$. Thus, we recover the classic thermodynamic condition
  \begin{equation}
    \label{sumofmuspositive}
    \mu_1 + \mu_2 + \mu_3 > 0.
  \end{equation}
  Assuming condition \eqref{sumofmuspositive}, there are a few different scenarios that may happen, as follows.
  
  \begin{lem}
    \label{lemmuscenarios}
    Let $\mu_1, \mu_2, \mu_3\in \RR$ be constants, and consider the function $g$ and the set $\Lambda_{\mu_1,\mu_2,\mu_3}$ given in \eqref{defLambda}. Suppose that \eqref{sumofmuspositive} holds. Then, we have the following scenarios.
  \renewcommand{\labelenumi}{(\roman{enumi})}
  \begin{enumerate}
    \item If $\mu_2, \mu_3> 0$ and $\mu_1> -2\sqrt{\mu_2\mu_3}$, then $\Lambda_{\mu_1,\mu_2,\mu_3} = (0,\infty)$.
    \item If $\mu_2, \mu_3> 0$ and $\mu_1\leq -2\sqrt{\mu_2\mu_3}$, then $0 <\lambda_- \leq \lambda_+$, and $\Lambda_{\mu_1,\mu_2,\mu_3} = (0,\lambda_-)\cup (\lambda_+,\infty)$.
    \item If $\mu_3>0$, $\mu_2=0$, and $\mu_1\geq 0$, then $\Lambda_{\mu_1,\mu_2,\mu_3} = (0,\infty)$.
    \item If $\mu_3>0$, $\mu_2=0$, and $\mu_1<0$, then $\lambda_0 = -\mu_3/\mu_1>0$ and $\Lambda_{\mu_1,\mu_2,\mu_3} = (0, \lambda_0)$.
    \item If $\mu_3>0$ and $\mu_2<0$, then $\Lambda_{\mu_1,\mu_2,\mu_3} = (0, \lambda_+)$.
    \item If $\mu_3=0$ and $\mu_1,\mu_2\geq 0$, then $\Lambda_{\mu_1,\mu_2,\mu_3} = (0,\infty)$.
    \item If $\mu_3=0$, $\mu_2>0$, and $\mu_1<0$, then $\Lambda_{\mu_1,\mu_2,\mu_3} = (\lambda_+,\infty)$.
    \item If $\mu_3=0$, $\mu_2<0$, and $\mu_1>0$, then $\Lambda_{\mu_1,\mu_2,\mu_3} = (0,\lambda_+)$.
    \item If $\mu_3<0$ and $\mu_2>0$, then $\Lambda_{\mu_1,\mu_2,\mu_3} = (\lambda_+,\infty)$.
    \item If $\mu_3,\mu_2<0$ and $\mu_1> 2\sqrt{\mu_2\mu_3}$, then $\Lambda_{\mu_1,\mu_2,\mu_3} = (\lambda_-,\lambda_+)$.
    \item If $\mu_3<0$, $\mu_2=0$, and $\mu_1>0$, then $\lambda_0 = -\mu_3/\mu_1>0$ and $\Lambda_{\mu_1,\mu_2,\mu_3} = (\lambda_0,\infty)$.
  \end{enumerate}
\end{lem}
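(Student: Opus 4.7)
The plan is to analyze the polynomial $p(\lambda) = \lambda g(\lambda) = \mu_2\lambda^2 + \mu_1\lambda + \mu_3$ on the positive real axis. Since $\lambda > 0$, the sign of $g(\lambda)$ coincides with the sign of $p(\lambda)$, so $\Lambda_{\mu_1,\mu_2,\mu_3} = \{\lambda > 0 : p(\lambda) > 0\}$. The hypothesis \eqref{sumofmuspositive} is precisely $p(1) > 0$, which pins down which connected component of $\{p > 0\}$ contains the point $\lambda = 1$. The values $p(0) = \mu_3$ and the leading coefficient $\mu_2$ control the behavior at the endpoints of $(0,\infty)$, and the roots \eqref{proots} of $p$ (when real, i.e.\ when $\mu_1^2 \geq 4\mu_2\mu_3$) determine the boundary points of $\Lambda_{\mu_1,\mu_2,\mu_3}$.

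I would organize the argument by cases according to the sign of $\mu_2$, since this determines whether $p$ is an upward parabola, a line, or a downward parabola, and then refine by the sign of $\mu_3 = p(0)$. In the parabolic cases ($\mu_2 \neq 0$), the decisive question is whether the discriminant $\mu_1^2 - 4\mu_2\mu_3$ is negative, zero, or positive, and then whether the real roots $\lambda_{\pm}$ are both positive, both negative, or of opposite signs. The signs of the roots are read off from Vi\`ete's formulas: their product is $\mu_3/\mu_2$ and their sum is $-\mu_1/\mu_2$. For instance, in case (i) ($\mu_2, \mu_3 > 0$ and $\mu_1 > -2\sqrt{\mu_2\mu_3}$), the discriminant is negative, so $p$ has no real roots; being upward-opening with $p(1)>0$ it is strictly positive everywhere, giving $\Lambda=(0,\infty)$. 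In case (ii), the discriminant is nonnegative, both roots are positive (product $\mu_3/\mu_2 > 0$, sum $-\mu_1/\mu_2 > 0$), and $p > 0$ outside $[\lambda_-,\lambda_+]$; since $p(1)>0$, the point $1$ lies in one of the two positive intervals, and the set is $(0,\lambda_-)\cup(\lambda_+,\infty)$.

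The remaining cases follow the same template. In the linear cases (iii)--(iv), $p(\lambda) = \mu_1\lambda + \mu_3$ with $\mu_3 > 0$: if $\mu_1 \geq 0$, then $p>0$ on $(0,\infty)$; if $\mu_1 < 0$, then $p$ vanishes at $\lambda_0 = -\mu_3/\mu_1 > 0$ and is positive on $(0,\lambda_0)$. In cases (v), (ix), (x) one has $\mu_2$ and $\mu_3$ of opposite signs or both negative, so by Vi\`ete the roots have opposite signs (in (v) and (ix)) or (in (x)) both roots are positive; in each situation the conclusion about which component of $\{p>0\}$ is the positive one is dictated by the opening direction of the parabola together with $p(0) = \mu_3$ and $p(1)>0$. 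Cases (vi)--(viii) and (xi), where $\mu_3 = 0$, are handled by factoring $p(\lambda) = \lambda(\mu_2\lambda + \mu_1)$.

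There is no substantial obstacle here; the entire argument is elementary case analysis of a quadratic (or affine) polynomial on $(0,\infty)$. The only mild delicacy is bookkeeping: one must check, in each branch, both the sign of the roots (via Vi\`ete) and the consistency with $p(1) > 0$, and verify that the case hypotheses on $(\mu_1,\mu_2,\mu_3)$ are compatible with the constraint \eqref{sumofmuspositive} (for example, in case (ii) the inequality $\mu_1 \leq -2\sqrt{\mu_2\mu_3}$ combined with $\mu_1+\mu_2+\mu_3>0$ forces $(\sqrt{\mu_2}-\sqrt{\mu_3})^2 > 0$, i.e.\ $\mu_2 \neq \mu_3$, and in particular $1 \notin [\lambda_-,\lambda_+]$, which is exactly what consistency of the case requires).
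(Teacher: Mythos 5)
The paper omits the proof of this lemma as elementary, but its setup (the polynomial $p(\lambda)=\lambda g(\lambda)$ and the roots \eqref{proots}) indicates exactly the case analysis you propose, so your strategy is the intended one and is sound overall. Two points need fixing in the execution. First, your reasoning for case (i) is wrong on part of the parameter range: if $\mu_2,\mu_3>0$ and $\mu_1\geq 2\sqrt{\mu_2\mu_3}$, then $\mu_1>-2\sqrt{\mu_2\mu_3}$ still holds but the discriminant is nonnegative and $p$ does have real roots. The correct argument there is that both roots are negative (product $\mu_3/\mu_2>0$, sum $-\mu_1/\mu_2<0$), so $p>0$ on $(0,\infty)$ anyway; case (i) thus splits into the subcase $|\mu_1|<2\sqrt{\mu_2\mu_3}$ (no real roots) and the subcase $\mu_1\geq 2\sqrt{\mu_2\mu_3}$ (two negative roots).

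Second, you should address the labelling of $\lambda_\pm$ when $\mu_2<0$: dividing by $2\mu_2<0$ reverses the order, so the literal formula \eqref{proots} then gives $\lambda_+\leq\lambda_-$ (for instance, in case (v) the literal $\lambda_+$ is negative, so $(0,\lambda_+)$ would be empty). The statements of cases (v), (viii) and (x) are only correct under the convention that $\lambda_-$ and $\lambda_+$ denote the smaller and larger root respectively, and your write-up must make this relabelling explicit, otherwise the intervals you derive from the parabola's shape will not match those written in the lemma. With these two repairs the remaining bookkeeping (Vi\`ete's formulas, $p(0)=\mu_3$, $p(1)>0$, opening direction) goes through exactly as you describe.
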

\medskip

We end this section with a few remarks.
\begin{rmk}[Small Deformations]
  \label{rmksmalldeformations}
  Without any deformation, the tensor $\sfF$ is the identity $\sfI$, and so is the left Cauchy-Green tensor $\sfB=\sfF\sfF^\tr = \sfI$. In this case, $\sfA=\Acal(\sfI)$ is a diagonal tensor, with identical diagonal elements given by $\mu_1+\mu_2+\mu_3$. Under the assumption \eqref{sumofmuspositive}, $\sfA$ is uniformly elliptic.  Then, by continuity, it is clear that, for small perturbations, the eigenvalues of $\sfB$ are near the identity and, hence, the eigenvalues of $\sfA=\Acal(\sfB)$ are still strictly positive. More precisely, one has that there exists $\delta>0$ such that if $\sfB\in L^\infty(\Omega)^{3\times 3}$ is a positive definite symmetric tensor such that $\|\sfB-\sfI\|_{L^\infty(\Omega)} \leq \delta$, then $\sfB$ is still an $\Acal$-positive tensor. The value of $\delta$ can be estimated in terms of $\mu_1,\mu_2,\mu_3$ depending on the case described in Proposition \ref{propmuscenarios}. In particular, the material may, or may not, allow arbitrarily large deformations.
\end{rmk}


\begin{rmk}[Salt dynamics]
In the time-dependent problem associated with the constitutive law \eqref{sigmaconstitutive}, we consider an Eulerian formulation in which $\sfB$ is a time-dependent variable of the system. In the initial value problem for this system, without any initial deformation, $\sfB$ is taken to be the identity tensor at the initial time. In this case, the associate tensor $\sfA=\Acal(\sfB)$ is initially a diagonal operator with diagonal element equal to $\mu_1+\mu_2+\mu_3>0$ (see Remark \ref{rmksmalldeformations}). Using energy estimates of the associated evolution equations and the regularization property of problem \eqref{generalizedStokes} proved here, we obtain a~priori estimates for $\sfB$ in $H^3(\Omega)^{3\times 3}\subset W^{1,\infty}(\Omega)$ and for $\bv$ in $H^4(\Omega)^3$, showing that $\sfB$ remains in $H^3(\Omega)^{3\times 3}$ and stays $\Acal$-positive for a short time, while $\bv$ remains in $H^4(\Omega)^3$. In doing so, we prove that the system is well-posed locally in time. This result will be presented elsewhere.
\end{rmk}

\subsection{Regularity of $\Acal(\sfB)$}
  \label{functionalestimatesforAcalofB}

In view of the regularity conditions on $\sfA$ needed for the existence, uniqueness, and regularity results given in Section \ref{secgenstokes}, we want to obtain the corresponding regularity properties of $\sfA=\Acal(\sfB)$ in terms of $\sfB$. This is the aim of the current section. Along the way, some useful estimates are derived.

One step is to represent the inverse $\sfB^{-1}$ in terms of $\sfB^2$, using the representation 
\[ p_\sfB(\lambda) = \lambda^3 - \rm{I}_\sfB \lambda^2 + \rm{II}_\sfB \lambda - \rm{III}_\sfB 
\]
for the characteristic polynomial of a tridimensional matrix $\sfB(\bx)$, at each point $\bx$. In this representation,
\begin{align*} 
  \rm{I}_\sfB & = \Tr \sfB, \\
  \rm{II}_\sfB & =  \frac{1}{2}\left( (\Tr \sfB)^2 - \Tr(\sfB^2)\right), \\
  \rm{III}_\sfB & = \frac{1}{6}\left((\Tr \sfB)^3 - 3(\Tr \sfB) (\Tr(\sfB^2)) + 2 \Tr(\sfB^3)\right) = \det \sfB,
\end{align*}
where $\Tr(\cdot)$ denotes the trace of a matrix or tensor, and $\det(\cdot)$, its determinant. We use the Cayley-Hamilton Theorem, which asserts that
\begin{equation}
  \label{cayleyhamilton} 
  p_\sfB(\sfB) = \sfB^3 - \rm{I}_\sfB \sfB^2 + \rm{II}_\sfB \sfB - \rm{III}_\sfB \sfI = 0, \quad \text{ on } \Omega.
\end{equation}
Hence,
  \begin{equation} 
    \label{cayleyhamiltonbinv}
    \sfB^{-1} = \frac{1}{\det \sfB}(\sfB^2 - \rm{I}_\sfB \sfB + \rm{II}_\sfB \sfI).
  \end{equation}
  
  Using this representation, we first obtain an $L^\infty$ regularity result for $\Acal(\sfB)$.  
\begin{prop}
  \label{proplinftyboundonacalb}
  If $\sfB\in L^\infty(\Omega)^{3\times 3}$ and $(\det B)^{-1}\in L^\infty(\Omega)$, then, $\sfB^{-1}\in L^\infty(\Omega)^{3\times 3}$. If, moreover, $\mu_1, \mu_2, \mu_3\in L^\infty(\Omega)$, then $\Acal(\sfB)\in L^\infty(\Omega)^{3\times 3}$.
\end{prop}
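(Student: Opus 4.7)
The plan is to invoke directly the Cayley--Hamilton representation of the inverse given in \eqref{cayleyhamiltonbinv}, which expresses $\sfB^{-1}$ as a polynomial in $\sfB$ with coefficients built from the scalar invariants $\rm{I}_\sfB$, $\rm{II}_\sfB$, divided by $\det \sfB$. Since everything reduces to $L^\infty$ estimates on a finite algebraic combination, there is no analytic subtlety, and the proof is essentially a bookkeeping exercise.

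First I would establish the bound on $\sfB^{-1}$. From \eqref{cayleyhamiltonbinv}, pointwise almost everywhere on $\Omega$,
\[
  |\sfB^{-1}(\bx)| \leq \frac{1}{|\det \sfB(\bx)|}\bigl(|\sfB^2(\bx)| + |\rm{I}_{\sfB(\bx)}|\,|\sfB(\bx)| + |\rm{II}_{\sfB(\bx)}|\bigr),
\]
using any convenient matrix norm and remembering that all matrix norms are equivalent in fixed finite dimension. Since $\sfB \in L^\infty(\Omega)^{3\times 3}$, the entries and hence $|\sfB|$ are essentially bounded; the products $\sfB^2$ and the scalar invariants $\rm{I}_\sfB = \Tr \sfB$ and $\rm{II}_\sfB = \tfrac{1}{2}((\Tr \sfB)^2 - \Tr(\sfB^2))$ are polynomial in these entries and therefore also essentially bounded, with explicit estimates such as $\|\sfB^2\|_{L^\infty(\Omega)} \leq c\,\|\sfB\|_{L^\infty(\Omega)}^2$, $\|\rm{I}_\sfB\|_{L^\infty(\Omega)} \leq c\,\|\sfB\|_{L^\infty(\Omega)}$, and $\|\rm{II}_\sfB\|_{L^\infty(\Omega)} \leq c\,\|\sfB\|_{L^\infty(\Omega)}^2$. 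Combining with the hypothesis $(\det \sfB)^{-1} \in L^\infty(\Omega)$, one concludes
\[
  \|\sfB^{-1}\|_{L^\infty(\Omega)} \leq c\,\|(\det \sfB)^{-1}\|_{L^\infty(\Omega)}\,\|\sfB\|_{L^\infty(\Omega)}^2,
\]
for a universal constant $c$, so that $\sfB^{-1} \in L^\infty(\Omega)^{3\times 3}$.

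For the second statement, starting from the definition \eqref{defAcal} of $\Acal(\sfB) = \mu_1 \sfI + \mu_2 \sfB + \mu_3 \sfB^{-1}$, the triangle inequality yields
\[
  \|\Acal(\sfB)\|_{L^\infty(\Omega)} \leq \sqrt{3}\,\|\mu_1\|_{L^\infty(\Omega)} + \|\mu_2\|_{L^\infty(\Omega)}\,\|\sfB\|_{L^\infty(\Omega)} + \|\mu_3\|_{L^\infty(\Omega)}\,\|\sfB^{-1}\|_{L^\infty(\Omega)},
\]
and inserting the bound for $\sfB^{-1}$ just obtained shows $\Acal(\sfB) \in L^\infty(\Omega)^{3\times 3}$ with an explicit estimate in terms of the $L^\infty$ norms of $\mu_1, \mu_2, \mu_3, \sfB$, and $(\det \sfB)^{-1}$. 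There is no genuine obstacle here; the only thing worth being careful about is keeping the dimensional/numerical constants consistent with the convention adopted in the paper for matrix norms, so that the estimate can be reused quantitatively in later propositions.
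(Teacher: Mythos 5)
Your proof is correct and follows essentially the same route as the paper: the Cayley--Hamilton representation \eqref{cayleyhamiltonbinv} of $\sfB^{-1}$, pointwise $L^\infty$ bounds on $\sfB^2$ and the invariants $\rm{I}_\sfB$, $\rm{II}_\sfB$, and then the triangle inequality applied to \eqref{defAcal}. The only difference is cosmetic: the paper tracks the explicit numerical constants (arriving at the factor $15$) rather than a generic universal constant $c$.
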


\begin{proof}
Clearly, $\sfB^2$, $\Tr \sfB$, and $\Tr \sfB^2$ are essentially bounded, with 
\[  \|\sfB^2\|_{L^\infty(\Omega)} \leq 3\|\sfB\|_{L^\infty(\Omega)}^2, \quad \|\Tr\sfB\|_{L^\infty(\Omega)} \leq 3\|\sfB\|_{L^\infty(\Omega)}, \quad \|\Tr\sfB^2\|_{L^\infty(\Omega)} \leq 9\|\sfB\|_{L^\infty(\Omega)}^2,
\]
and, hence,
\[ \|\rm{I}_{\sfB}\sfB\|_{L^\infty(\Omega)} \leq 3\|\sfB\|_{L^\infty(\Omega)}^2, \quad \|\rm{II}_{\sfB}\sfI\|_{L^\infty(\Omega)} \leq 9\|\sfB\|_{L^\infty(\Omega)}^2.
\]
Using these estimates in \eqref{cayleyhamiltonbinv} implies that $\sfB^{-1}\in L^\infty(\Omega)$, with
  \begin{equation}
    \label{estimateBinvlinfty}
    \|\sfB^{-1}\|_{L^\infty(\Omega)} \leq 15\|(\det\sfB)^{-1}\|_{L^\infty(\Omega)} \|\sfB\|_{L^\infty(\Omega)}^2.
  \end{equation}
  Then, using \eqref{estimateBinvlinfty} in \eqref{defAcal}, we obtain
    \begin{equation}
    \label{estimateAcalBlinfty}
    \|\Acal(\sfB)\|_{L^\infty(\Omega)} \leq \|\mu_1\|_{L^\infty(\Omega)} + \|\mu_2\|_{L^\infty(\Omega)}\|\sfB\|_{L^\infty(\Omega)} + 15 \|\mu_3\|_{L^\infty(\Omega)}\|(\det\sfB)^{-1}\|_{L^\infty(\Omega)}\|\sfB\|_{L^\infty(\Omega)}^2,
  \end{equation}
  which proves that $\Acal(\sfB)\in L^\infty(\Omega)^{3\times 3}$.
\end{proof}
\medskip

For the sake of simplicity and with the applications in mind, we restrict the next estimates to the relevant case $\det\sfB=1$. We then look for $W^{1,\infty}(\Omega)$ regularity.
\begin{prop}
  \label{propwoneinftyboundonacalb}
  If $\sfB\in W^{1,\infty}(\Omega)^{3\times 3}$ and $\det B = 1$ almost everywhere on $\Omega$, then, $\sfB^{-1}\in W^{1,\infty}(\Omega)^{3\times 3}$. If, moreover, $\mu_1, \mu_2, \mu_3\in W^{1,\infty}(\Omega)$, then $\Acal(\sfB)\in W^{1,\infty}(\Omega)^{3\times 3}$.
\end{prop}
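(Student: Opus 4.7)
The plan is to exploit the Cayley--Hamilton representation \eqref{cayleyhamiltonbinv}, which under the assumption $\det \sfB = 1$ simplifies to
\[ \sfB^{-1} = \sfB^2 - \rm{I}_\sfB \sfB + \rm{II}_\sfB \sfI, \]
expressing $\sfB^{-1}$ as a polynomial in the entries of $\sfB$. Since $W^{1,\infty}(\Omega)$ is an algebra (products of essentially bounded Lipschitz functions remain essentially bounded and Lipschitz, with the product rule $\bnabla(fg) = (\bnabla f)g + f\bnabla g$ holding a.e.\ and giving the estimate $\|\bnabla(fg)\|_{L^\infty} \leq \|\bnabla f\|_{L^\infty}\|g\|_{L^\infty} + \|f\|_{L^\infty}\|\bnabla g\|_{L^\infty}$), this polynomial representation immediately transfers the $W^{1,\infty}$ regularity from $\sfB$ to $\sfB^{-1}$.

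More concretely, I would first check that each of the building blocks $\sfB^2$, $\rm{I}_\sfB = \Tr \sfB$, and $\rm{II}_\sfB = \tfrac{1}{2}((\Tr \sfB)^2 - \Tr \sfB^2)$ lies in $W^{1,\infty}(\Omega)$, with explicit bounds of the form
\[ \|\bnabla \sfB^2\|_{L^\infty(\Omega)} \leq c \|\sfB\|_{L^\infty(\Omega)}\|\bnabla \sfB\|_{L^\infty(\Omega)}, \qquad \|\bnabla \rm{II}_\sfB\|_{L^\infty(\Omega)} \leq c\|\sfB\|_{L^\infty(\Omega)}\|\bnabla \sfB\|_{L^\infty(\Omega)}, \]
and similarly for $\rm{I}_\sfB$. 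Combining these with the $L^\infty$ estimate \eqref{estimateBinvlinfty} from Proposition \ref{proplinftyboundonacalb} (now with $\|(\det \sfB)^{-1}\|_{L^\infty} = 1$) and applying the product rule to the Cayley--Hamilton formula yields an estimate of the form
\[ \|\bnabla \sfB^{-1}\|_{L^\infty(\Omega)} \leq c\left( \|\sfB\|_{L^\infty(\Omega)} + \|\sfB\|_{L^\infty(\Omega)}^2\right)\|\bnabla \sfB\|_{L^\infty(\Omega)}, \]
proving $\sfB^{-1} \in W^{1,\infty}(\Omega)^{3\times 3}$. (As a sanity check, one may also verify this by differentiating the identity $\sfB\sfB^{-1} = \sfI$ to obtain $\bnabla \sfB^{-1} = -\sfB^{-1}(\bnabla \sfB)\sfB^{-1}$ a.e., which gives the same type of bound via the already-established $L^\infty$ control of $\sfB^{-1}$.)

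For the final statement, since $\Acal(\sfB) = \mu_1 \sfI + \mu_2 \sfB + \mu_3 \sfB^{-1}$ is a linear combination, with $W^{1,\infty}(\Omega)$ coefficients, of tensors in $W^{1,\infty}(\Omega)^{3\times 3}$, another application of the product rule gives $\Acal(\sfB) \in W^{1,\infty}(\Omega)^{3\times 3}$, with an explicit bound obtained by collecting all previous ones. There is no real obstacle here beyond keeping track of the constants; the only point requiring care is that the hypothesis $\det \sfB = 1$ is what allows us to avoid having to assume additional regularity of $(\det\sfB)^{-1}$, which in the general case would appear as a factor and would not automatically belong to $W^{1,\infty}$ without extra hypotheses. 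The same strategy (Cayley--Hamilton plus the algebra structure of $W^{k,p}$ or $W^{k,\infty}$ in the relevant range) is what will drive the higher-order regularity statements in the subsequent Propositions \ref{propw23boundonacalb} and \ref{prophmboundonacalb}.
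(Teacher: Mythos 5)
Your proposal is correct and follows essentially the same route as the paper: the paper also uses the Cayley--Hamilton representation \eqref{cayleyhamiltonbinv} with $\det\sfB=1$, differentiates it term by term using the product rule and the linearity of the trace to obtain the explicit formula \eqref{partialxiofinvB} and the bound $\|D(\sfB^{-1})\|_{L^\infty(\Omega)}\leq 20\|\sfB\|_{L^\infty(\Omega)}\|D\sfB\|_{L^\infty(\Omega)}$, and then applies the product rule to $\Acal(\sfB)=\mu_1\sfI+\mu_2\sfB+\mu_3\sfB^{-1}$. The only cosmetic difference is that your stated bound $c\bigl(\|\sfB\|_{L^\infty}+\|\sfB\|_{L^\infty}^2\bigr)\|\bnabla\sfB\|_{L^\infty}$ is weaker than the sharp homogeneous bound $c\|\sfB\|_{L^\infty}\|\bnabla\sfB\|_{L^\infty}$ that the quadratic structure actually yields.
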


\begin{proof}
That $\sfB^{-1}$ and $\Acal(\sfB)$ belong to $L^\infty(\Omega)^{3\times 3}$ follows from Proposition \ref{proplinftyboundonacalb}. We only need to consider the derivatives of such tensors.

Since the trace is a linear operator and taking into account that $\det\sfB=1$, the derivatives $\partial_{x_i}\left(\sfB^{-1}\right)$ are easily found from \eqref{cayleyhamiltonbinv} to be
\begin{equation}
  \label{partialxiofinvB}
  \partial_{x_i} \left(\sfB^{-1}\right) = 2\sfB\partial_{x_i}\sfB - \Tr(\partial_{x_i}\sfB) \sfB + \Tr(\sfB)\partial_{x_i}\sfB + \left( \Tr(\sfB)\Tr(\partial_{x_i}\sfB) -  \Tr(\sfB\partial_{x_i}\sfB)\right) \sfI.
\end{equation}
for every $i=1,2,3.$ Thus,
\[ \|\partial_{x_i} \left(\sfB^{-1}\right)\|_{L^\infty(\Omega)} \leq 20\|\sfB\|_{L^\infty(\Omega)}\|\partial_{x_i}\sfB\|_{L^\infty(\Omega)},
\]
This yields 
  \begin{equation}
    \label{estimateBinvwoneinfty}
    \|D(\sfB^{-1})\|_{L^\infty(\Omega)} \leq 20\|\sfB\|_{L^\infty(\Omega)} \|D\sfB\|_{L^\infty(\Omega)}.
  \end{equation}
From \eqref{defAcal} we find that
\begin{multline*}
  \|\partial_{x_i}\Acal(\sfB)\|_{L^\infty(\Omega)} \leq \|\partial_{x_i}\mu_1\|_{L^\infty(\Omega)} + \|\partial_{x_i}\mu_2\|_{L^\infty(\Omega)}\|\sfB\|_{L^\infty(\Omega)} + \|\mu_2\|_{L^\infty(\Omega)} \|\partial_{x_i}\sfB\|_{L^\infty(\Omega)} \\
  + \|\partial_{x_i}\mu_3\|_{L^\infty(\Omega)}\|\sfB^{-1}\|_{L^\infty(\Omega)} + \|\mu_3\|_{L^\infty(\Omega)}\|\partial_{x_i}(\sfB^{-1})\|_{L^\infty(\Omega)}.
\end{multline*}
which, together with \eqref{estimateBinvlinfty} and \eqref{estimateBinvwoneinfty}, yields   
\begin{multline}
    \label{estimateAcalBwoneinfty}
    \|D\Acal(\sfB)\|_{L^\infty(\Omega)} \leq \|D\mu_1\|_{L^\infty(\Omega)} + \|D\mu_2\|_{L^\infty(\Omega)}\|\sfB\|_{L^\infty(\Omega)} + \|\mu_2\|_{L^\infty(\Omega)}\|D\sfB\|_{L^\infty(\Omega)} \\
    + 9 \|D\mu_3\|_{L^\infty(\Omega)}\|\sfB\|_{L^\infty(\Omega)}^2 +20\|\mu_3\|_{L^\infty(\Omega)}\|\sfB\|_{L^\infty(\Omega)}\|D\sfB\|_{L^\infty(\Omega)}.
  \end{multline}
  This proves the desired regularity for $\Acal(\sfB)$.
\end{proof}

Now we look for a $W^{2,3}(\Omega)$ estimate.
\begin{prop}
  \label{propw23boundonacalb}
  If $\sfB\in W^{2,3}(\Omega)^{3\times 3}$ and $\det B = 1$ almost everywhere on $\Omega$, then, $\sfB^{-1}\in W^{2,3}(\Omega)^{3\times 3}$. If, moreover, $\mu_1, \mu_2, \mu_3\in W^{1,\infty}(\Omega)\cap W^{2,3}(\Omega)$, then $\Acal(\sfB)\in W^{2,3}(\Omega)^{3\times 3}$.
\end{prop}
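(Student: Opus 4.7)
The strategy is the same as in the proofs of Propositions \ref{proplinftyboundonacalb} and \ref{propwoneinftyboundonacalb}. I would start from the explicit representation \eqref{partialxiofinvB} already derived for $\partial_{x_i}(\sfB^{-1})$ under the assumption $\det\sfB=1$, and differentiate it once more with respect to $x_j$. This yields a closed form for $\partial_{x_j}\partial_{x_i}(\sfB^{-1})$ as a polynomial expression in $\sfB$, $D\sfB$, $D^2\sfB$, and their traces. After using the Leibniz rule, every term that appears is of one of two types: either (a) a bilinear product of first derivatives of $\sfB$, such as $(\partial_{x_j}\sfB)(\partial_{x_i}\sfB)$, $\Tr(\partial_{x_j}\sfB)\partial_{x_i}\sfB$, or $\Tr((\partial_{x_j}\sfB)(\partial_{x_i}\sfB))\sfI$, or (b) a product of $\sfB$ (or a scalar invariant of $\sfB$) with a second derivative of $\sfB$, such as $\sfB\partial_{x_i}\partial_{x_j}\sfB$, $\Tr(\sfB)\partial_{x_i}\partial_{x_j}\sfB$, or $\Tr(\sfB\partial_{x_i}\partial_{x_j}\sfB)\sfI$.

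Each of these is then estimated in $L^3(\Omega)$. Terms of type (a) are bounded by H\"older's inequality with conjugate exponents $6$ and $6$, giving inequalities such as
\begin{equation*}
\|(\partial_{x_j}\sfB)(\partial_{x_i}\sfB)\|_{L^3(\Omega)} \leq c\|\partial_{x_j}\sfB\|_{L^6(\Omega)}\|\partial_{x_i}\sfB\|_{L^6(\Omega)},
\end{equation*}
and the right-hand side is finite because of the three-dimensional Sobolev embedding $W^{1,3}(\Omega)\hookrightarrow L^6(\Omega)$, which applied to $D\sfB\in W^{1,3}(\Omega)^{3\times 3\times 3}$ gives $D\sfB\in L^6(\Omega)^{3\times 3\times 3}$. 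Terms of type (b) are bounded using that, in three dimensions, $W^{2,3}(\Omega)\hookrightarrow L^\infty(\Omega)$, so that $\sfB\in L^\infty(\Omega)^{3\times 3}$, whence
\begin{equation*}
\|\sfB\,\partial_{x_i}\partial_{x_j}\sfB\|_{L^3(\Omega)} \leq c\|\sfB\|_{L^\infty(\Omega)}\|\partial_{x_i}\partial_{x_j}\sfB\|_{L^3(\Omega)}.
\end{equation*}
Summing these local estimates over all the resulting summands and all indices $i,j$ yields $\sfB^{-1}\in W^{2,3}(\Omega)^{3\times 3}$, together with an explicit bound for $\|D^2(\sfB^{-1})\|_{L^3(\Omega)}$ in terms of $\|\sfB\|_{L^\infty(\Omega)}$, $\|D\sfB\|_{L^6(\Omega)}$, and $\|D^2\sfB\|_{L^3(\Omega)}$, all of which are controlled by $\|\sfB\|_{W^{2,3}(\Omega)}$.

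For the statement about $\Acal(\sfB)$, I would differentiate the definition \eqref{defAcal} twice and group the resulting expressions into three types of terms: (i) $(D^2\mu_k)\sfB^{\pm 1}$, handled by $\mu_k\in W^{2,3}$ and $\sfB^{\pm 1}\in L^\infty$ (the latter from Proposition \ref{proplinftyboundonacalb}); (ii) products of $D\mu_k$ with $D(\sfB^{\pm 1})$, handled by $D\mu_k\in L^\infty$ (from $\mu_k\in W^{1,\infty}$) together with $D(\sfB^{\pm 1})\in L^6(\Omega)\subset L^3(\Omega)$, which follows from formula \eqref{partialxiofinvB} and the embedding $W^{1,3}\hookrightarrow L^6$; and (iii) $\mu_k\,D^2(\sfB^{\pm 1})$, handled by $\mu_k\in L^\infty$ together with the $L^3$ bound for $D^2(\sfB^{-1})$ obtained in the previous step. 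Combining these estimates yields $\Acal(\sfB)\in W^{2,3}(\Omega)^{3\times 3}$ with an explicit bound.

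The main obstacle is not conceptual but bookkeeping: one has to keep track of all the summands generated by two successive applications of the Leibniz rule to \eqref{cayleyhamiltonbinv} and \eqref{defAcal}, and verify that each admits the appropriate H\"older factorization compatible with the Sobolev embeddings $W^{1,3}\hookrightarrow L^6$ and $W^{2,3}\hookrightarrow L^\infty$ available in dimension three. No other estimate beyond H\"older's inequality and these embeddings is needed.
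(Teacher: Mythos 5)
Your proposal is correct and follows essentially the same route as the paper: differentiating the Cayley--Hamilton representation \eqref{partialxiofinvB} once more (the paper's \eqref{partialxixjofinvB}), splitting the terms into bilinear products of first derivatives and products of $\sfB$ with second derivatives, and bounding them via H\"older together with the embeddings $W^{1,3}(\Omega)\hookrightarrow L^6(\Omega)$ and $W^{2,3}(\Omega)\hookrightarrow L^\infty(\Omega)$, which yields exactly the paper's estimate $\|D^2(\sfB^{-1})\|_{L^3(\Omega)} \leq c\left(\|D\sfB\|_{L^6(\Omega)}^2 + \|\sfB\|_{L^\infty(\Omega)}\|D^2\sfB\|_{L^3(\Omega)}\right)$. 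The treatment of $\Acal(\sfB)$ by twice applying Leibniz to \eqref{defAcal} and grouping into the three types of terms also matches the paper's estimate \eqref{estimateAcalBw23}.
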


\begin{proof}
From \eqref{cayleyhamiltonbinv}, and using that $\det \sfB =1$ almost everywhere, we see that
\[ \|\sfB^{-1}\|_{L^3(\Omega)} \leq c \|\sfB\|_{L^6(\Omega)}^2,
\] 
for some universal constant $c$. Using \eqref{partialxiofinvB}, we find that
\[ \|D\sfB^{-1}\|_{L^3(\Omega)} \leq c \|\sfB\|_{L^6(\Omega)}\|D\sfB\|_{L^6(\Omega)},
\]
for a different universal constant $c$. Taking the $\partial_{x_j}$ derivative of \eqref{partialxiofinvB} we find that
\begin{multline}
  \label{partialxixjofinvB}
  \partial_{x_i x_j}^2 \left(\sfB^{-1}\right) = 2\partial_{x_j}\sfB\partial_{x_i}\sfB + 2\sfB\partial_{x_i x_j}^2\sfB \\
  - \Tr(\partial_{x_ix_j}^2\sfB) \sfB - \Tr(\partial_{x_i}\sfB) \partial_{x_j} \sfB + \Tr(\partial_{x_j}\sfB)\partial_{x_i}\sfB + \Tr(\sfB)\partial_{x_i x_j}^2\sfB \\
  + \left( \Tr(\partial_{x_j}\sfB)\Tr(\partial_{x_i}\sfB) + \Tr(\sfB)\Tr(\partial_{x_i x_j}^2\sfB) -  \Tr(\partial_{x_j}\sfB\partial_{x_i}\sfB) -  \Tr(\sfB\partial_{x_i x_j}^2\sfB)\right) \sfI,
\end{multline}
for every $i=1,2,3.$ Then, using H\"older's inequality, we find that
  \begin{equation}
    \label{estimateBinvw23}
    \|D^2(\sfB^{-1})\|_{L^3(\Omega)} \leq c \left(\|D\sfB\|_{L^6(\Omega)}^2 +  \|\sfB\|_{L^\infty(\Omega)}\|D^2\sfB\|_{L^3(\Omega)}\right),
  \end{equation}
  for another suitable universal constant $c$. These estimates show that $\sfB^{-1}$ belongs to $W^{2,3}(\Omega)^{3\times 3}$. Now, from \eqref{defAcal} and the regularity obtained for $\sfB^{-1}$, we readily see that $\Acal(\sfB)\in W^{2,3}(\Omega)^{3\times 3}$, with, in particular,
  \begin{multline}
    \label{estimateAcalBw23}
    \|D^2\Acal(\sfB)\|_{L^3(\Omega)} \leq c\left(\|D^2\mu_1\|_{L^3(\Omega)} 
      + \|D^2\mu_2\|_{L^3(\Omega)}\|\sfB\|_{L^\infty(\Omega)} 
      + \|D\mu_2\|_{L^\infty(\Omega)}\|D\sfB\|_{L^3(\Omega)} \right. \\ + \|\mu_2\|_{L^\infty(\Omega)}\|D\sfB\|_{L^3(\Omega)} 
+ \|D^2\mu_3\|_{L^3(\Omega)}\|\sfB^{-1}\|_{L^\infty(\Omega)} \\ \left. + \|D\mu_3\|_{L^\infty(\Omega)}\|D(\sfB^{-1})\|_{L^3(\Omega)} + \|\mu_3\|_{L^\infty(\Omega)}\|D^2(\sfB^{-1})\|_{L^3(\Omega)}\right),
  \end{multline}
  for a suitable universal constant $c$.
\end{proof}

Finally we look for $H^m(\Omega)$ estimates, for any integer $m\geq 2$.
\begin{prop}
\label{prophmboundonacalb}
If $\sfB\in H^m(\Omega)$ and $\det B = 1$ almost everywhere on $\Omega$, where $m\in \NN$, $m\geq 2$, then, $\sfB^{-1}\in H^m(\Omega)^{3\times 3}$. If, moreover, $\mu_1, \mu_2, \mu_3\in W^{1,\infty}(\Omega)
\cap H^m(\Omega)^{m\times m}$, then $\Acal(\sfB)\in H^m(\Omega)^{3\times 3}$.
\end{prop}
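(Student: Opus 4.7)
The plan is to reduce everything to a purely algebraic identity and then exploit the Banach-algebra structure of $H^m(\Omega)$ in dimension three. The starting point is the Cayley--Hamilton identity \eqref{cayleyhamiltonbinv}: since $\det\sfB=1$ almost everywhere, we have
\[
\sfB^{-1} = \sfB^2 - (\Tr\sfB)\sfB + \tfrac{1}{2}\bigl((\Tr\sfB)^2 - \Tr(\sfB^2)\bigr)\sfI,
\]
so $\sfB^{-1}$ is a polynomial of degree two in the entries of $\sfB$. The entire statement then reduces to showing that such a polynomial expression belongs to $H^m(\Omega)^{3\times 3}$ whenever $\sfB$ does, and likewise for $\Acal(\sfB)=\mu_1\sfI+\mu_2\sfB+\mu_3\sfB^{-1}$.

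The key ingredient I would invoke is that, in our three-dimensional setting, the Sobolev embedding $H^2(\Omega)\hookrightarrow L^\infty(\Omega)$ together with the Leibniz rule and the intermediate embeddings of $H^s(\Omega)$ into $L^{p_s}(\Omega)$ (with $p_s=6/(3-2s)$ for $0\leq s<3/2$ and $p_s=\infty$ for $s>3/2$) imply that $H^m(\Omega)$ is a Banach algebra for every integer $m\geq 2$: one writes $D^{\bgamma}(fg) = \sum_{\bgamma'\leq \bgamma}\binom{\bgamma}{\bgamma'}D^{\bgamma'}f\,D^{\bgamma-\bgamma'}g$ for $|\bgamma|\leq m$, and estimates each term by H\"older's inequality after putting the factor of highest differential order in $L^2$ and the other in the appropriate $L^{p_s}$. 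This yields $\|fg\|_{H^m(\Omega)}\leq c(\Omega)\|f\|_{H^m(\Omega)}\|g\|_{H^m(\Omega)}$, with a shape constant $c(\Omega)$.

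I would then apply this algebra property iteratively to the Cayley--Hamilton formula above. Since $\sfB\in H^m(\Omega)^{3\times 3}$, each of $\sfB^2$, $(\Tr\sfB)\sfB$ and $((\Tr\sfB)^2-\Tr\sfB^2)\sfI$ belongs to $H^m(\Omega)^{3\times 3}$, so $\sfB^{-1}\in H^m(\Omega)^{3\times 3}$, with an estimate that is polynomial (of degree two) in $\|\sfB\|_{H^m(\Omega)}$, in the spirit of \eqref{estimateBinvlinfty}, \eqref{estimateBinvwoneinfty} and \eqref{estimateBinvw23}. Finally, using again the algebra property, each product $\mu_2\sfB$ and $\mu_3\sfB^{-1}$ of two $H^m$-functions lies in $H^m(\Omega)^{3\times 3}$, hence so does $\Acal(\sfB)$, and one obtains an explicit bound of the form $\|\Acal(\sfB)\|_{H^m(\Omega)}\leq c(\Omega)P(\|\mu_i\|_{H^m(\Omega)},\|\sfB\|_{H^m(\Omega)})$ with $P$ a polynomial in its arguments.

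The argument itself has no conceptual obstacle; the only thing requiring care is the bookkeeping for the constants and combinatorics of Leibniz's rule when expanding the cubic-type products coming from $\sfB^2$ and $(\Tr\sfB)\sfB$, and tracking the use of Sobolev embeddings for the intermediate-order derivatives. Since these produce only shape constants and polynomial dependence on the norms of the data, the regularity $\Acal(\sfB)\in H^m(\Omega)^{3\times 3}$ follows, together with an estimate sufficient to feed into Theorem \ref{hkplustworegularity}.
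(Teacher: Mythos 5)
Your proposal is correct and follows essentially the same route as the paper: both start from the Cayley--Hamilton representation \eqref{cayleyhamiltonbinv} with $\det\sfB=1$ and control the resulting products by Leibniz's rule, H\"older's inequality and the three-dimensional Sobolev embeddings. The only difference is one of packaging: you invoke the Banach-algebra property of $H^m(\Omega)$ for $m\geq 2$ (which in particular makes the $W^{1,\infty}$ hypothesis on the $\mu_i$ superfluous for this proposition), whereas the paper unwinds exactly the same estimates term by term in \eqref{estimateBinvl2}--\eqref{estimateAcalBinhm}, using the embeddings $H^m(\Omega)\subset W^{i,\infty}(\Omega)$ for $i\leq m/2$ when $m\geq 3$.
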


\begin{proof}
Since $\sfB\in H^m(\Omega)^{3\times 3} \subset L^4(\Omega)^{3\times 3}$, for $m\geq 2$, it is straightforward to see that
  \begin{equation}
    \label{estimateBinvl2}
    \|\sfB^{-1}\|_{L^2(\Omega)} \leq c \|\sfB\|_{L^4(\Omega)}^2,
  \end{equation}
for a suitable universal constant $c$, so that $\sfB^{-1}$ is in $L^2(\Omega)^{3\times 3}$. Using H\"older's inequality in \eqref{partialxiofinvB} we see that
  \begin{equation}
    \label{estimateBinvh1}
    \|D(\sfB^{-1})\|_{L^2(\Omega)} \leq c\|\sfB\|_{L^{12}(\Omega)}\|D\sfB\|_{L^{12/5}(\Omega)}^2,
  \end{equation}
for another universal constant $c$. Since $H^m(\Omega)\subset L^{12}(\Omega)\cap W^{1,12/5}(\Omega)$, this shows that $\sfB^{-1}\in H^1(\Omega)^{3\times 3}$. From \eqref{partialxixjofinvB}, we see that
\begin{equation}
  \label{estimateBinvh2}
  \|D^2(\sfB^{-1})\|_{L^2(\Omega)} \leq c \left(\|D\sfB\|_{L^4(\Omega)}^2 +  \|\sfB\|_{L^\infty(\Omega)}\|D^2\sfB\|_{L^2(\Omega)}\right),
  \end{equation}
  for another suitable universal constant $c$. Since in three dimensions we have the embedding $H^m\subset L^\infty$, for $m\geq 2$, it follows that $\sfB^{-1}\in H^2(\Omega)^{3\times 3}$ as well. This proves the case $m=2$. For $m = 3, 4, \ldots$, we use Leibniz's rule and estimate $D^m(\sfB^{-1})$ simply by
  \[ \|D^m(\sfB^{-1})\|_{L^2(\Omega)} \leq c\sum_{i=1}^{[m/2]} \|D^i\sfB\|_{L^\infty(\Omega)} \|D^{m-i}\sfB\|_{L^2(\Omega)},
  \]
  where $[m/2]$ is the largest integer smaller than or equal to $m/2$. When $m\geq 3$ and the space is three-dimensional, the embedding $H^m(\Omega)\subset W^{i,\infty}$ holds for any integer $i\leq m/2$. Thus, $\sfB^{-1}\in H^m(\Omega)^{3\times 3}$.
  
  For the regularity of $\Acal(\sfB)$, we see that
  \begin{equation}
    \label{estimateAcalBinl2}
    \|\Acal(\sfB)\|_{L^2(\Omega)} \leq c\left(\|\mu_1\|_{L^2(\Omega)} + \|\mu_2\|_{L^\infty(\Omega)}\|\sfB\|_{L^2(\Omega)} + \|\mu_3\|_{L^\infty(\Omega)}\|\sfB^{-1}\|_{L^2(\Omega)}\right),
  \end{equation}
  for a universal constant $c$, and
  \begin{multline}
    \label{estimateAcalBinhm}
    \|D^m\Acal(\sfB)\|_{L^2(\Omega)} \leq c\left(\|D^m\mu_1\|_{L^2(\Omega)} + \|\mu_2\|_{L^\infty(\Omega)}\|D^m\sfB\|_{L^2(\Omega)} + \|D\mu_2\|_{L^\infty(\Omega)}\|D^{m-1}\sfB\|_{L^2(\Omega)} \right. \\
      + \|\mu_3\|_{L^\infty(\Omega)}\|D^m(\sfB^{-1})\|_{L^2(\Omega)} + \|D\mu_3\|_{L^\infty(\Omega)}\|D^{m-1}(\sfB^{-1})\|_{L^2(\Omega)} \\
      \left. + \sum_{i=2}^m \|D^i\mu_2\|_{L^2(\Omega)}\|D^{m-i}\sfB\|_{L^\infty(\Omega)} + \sum_{i=2}^m \|D^i\mu_3\|_{L^2(\Omega)}\|D^{m-i}(\sfB^{-1})\|_{L^\infty(\Omega)} \right),
  \end{multline}
  for another suitable constant $c$. From the embedding $H^m(\Omega) \subset W^{m-i,\infty}$, for $m\geq 2$ and $2\leq i \leq m$, valid in three dimensions, it follows from \eqref{estimateAcalBinl2} and \eqref{estimateAcalBinhm} that $\Acal(\sfB)\in H^m(\Omega)$.
\end{proof}

\begin{rmk}
  If the regularities of $\Acal(\sfB)$ in $W^{2,3}(\Omega)^{3\times 3}$ and in $H^m(\Omega)^{3\times 3}$ were the sole purposes of Propositions \ref{propw23boundonacalb} and \ref{prophmboundonacalb}, respectively, then, of course, one could ask for different regularity assumptions on $\sfB$ and on the parameters $\mu_1, \mu_2$, and $\mu_3$. But since these estimates will be used in conjunction with Theorem \ref{hkplustworegularity}, which requires that $\sfA = \Acal(\sfB)$ belong to $W^{1,\infty}(\Omega)^{3\times 3}$, then we are constrained by starting with the assumptions required in Proposition \ref{propwoneinftyboundonacalb}.
\end{rmk}

\begin{rmk}
  More general conditions for the regularity of $\Acal(\sfB)$ can be obtained by imposing conditions not only on $\sfB$ but also on $\sfB^{-1}$ itself. However, for practical purposes, since in the evolutionary system \eqref{LiueulerB} the tensor $\sfB$ is one of the unknowns, it is useful to derive conditions on $\sfB$ alone.
\end{rmk}

\subsection{Existence and regularity results for the viscoelastic Stokes problem}
\label{existuniqregviscoelasticsec}

The existence, uniqueness, and regularity properties for the viscoelastic Stokes problem \eqref{viscoelasticStokes} are direct consequences of the corresponding results for the generalized Stokes problem \eqref{generalizedStokes}, provided the tensor $\sfB$ is $\Acal$-positive, which guarantees that the corresponding tensor $\sfA=\Acal(\sfB)$ is uniformly positive definite, and provided the tensor $\Acal(\sfB)$ is sufficiently regular. The main condition for $\sfB$ to be $\Acal$-positive is given in Proposition \ref{propessinfglambdacondition}, in Section \ref{conditionsellipticity}, while the regularity of $\Acal(\sfB)$ is investigated in Propositions \ref{proplinftyboundonacalb}, \ref{propwoneinftyboundonacalb}, \ref{propw23boundonacalb}, and \ref{prophmboundonacalb}, in Section \ref{functionalestimatesforAcalofB}. The aim of the current section is to put these things together. With that in mind, we state, without proof, the following results, corresponding to Theorems \ref{weaksolutionweakgeneralizedStokeswithpressure}, \ref{h2regularity}, and \ref{hkplustworegularity}, respectively.

\begin{thm}[Existence and uniqueness]
  \label{weaksolutionweakviscoelasticdStokeswithpressure}
  Let $\Omega\subset \RR^3$ be a bounded Lipschitz domain, $\sfB\in L^\infty(\Omega)^{3\times 3}$ be a uniformly positive definite symmetric tensor on $\Omega$ with $(\det \sfB)^{-1}\in L^\infty(\Omega)$, $\mu_1$, $\mu_2$, and $\mu_3 \in L^\infty(\Omega)$ be scalar fields, and $\bbf\in H^{-1}(\Omega)^3$. Assume that \eqref{essinfglambda} holds. Then, there exist a unique vector field $\bv\in V(\Omega)$ and a scalar field $p\in L^2(\Omega)$ that solve the weak formulation \eqref{weakviscoelasticStokeswithpressure} of the viscoelastic Stokes problem. Moreover, the field $p$ is unique up to a constant, and the estimates
  \begin{equation}
    \label{weaksolutionweakgeneralizedStokesestimaterepeatwithB}
    \|\bnabla \bv\|_{L^2(\Omega)} \leq \frac{1}{\alpha} \|\bbf\|_{H^{-1}(\Omega)},
  \end{equation}
  and  
  \begin{equation}
    \label{weaksolutionweakgeneralizedStokesestimatewithpressurewithB}
    \|p\|_{L^2(\Omega)/\RR} \leq \frac{c(\Omega)}{\alpha} \|\Acal(\sfB)\|_{L^\infty(\Omega)}\|\bbf\|_{H^{-1}(\Omega)},
  \end{equation}
  hold, where $c(\Omega)$ is a shape constant.
\end{thm}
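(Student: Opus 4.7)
The plan is to recognize this theorem as a direct specialization of the general Stokes result (Theorem \ref{weaksolutionweakgeneralizedStokeswithpressure}) to the choice $\sfA = \Acal(\sfB)$, so the entire proof reduces to checking that, under the stated hypotheses on $\sfB$ and $\mu_1, \mu_2, \mu_3$, the tensor $\Acal(\sfB)$ satisfies the three requirements of that earlier theorem: it is (i) symmetric, (ii) essentially bounded on $\Omega$, and (iii) uniformly positive definite in the sense of Definition \ref{defuniformpositivedef}.

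First I would verify these three requirements in order. Symmetry of $\Acal(\sfB)=\mu_1\sfI + \mu_2\sfB + \mu_3\sfB^{-1}$ is immediate from the symmetry of $\sfB$ (and hence of $\sfB^{-1}$) together with the scalar nature of $\mu_1,\mu_2,\mu_3$. The essential boundedness $\Acal(\sfB)\in L^\infty(\Omega)^{3\times 3}$ is exactly the content of Proposition \ref{proplinftyboundonacalb}, whose hypotheses $\sfB\in L^\infty(\Omega)^{3\times 3}$, $(\det\sfB)^{-1}\in L^\infty(\Omega)$, and $\mu_i\in L^\infty(\Omega)$ are precisely the ones assumed here; the quantitative bound \eqref{estimateAcalBlinfty} will then feed into the pressure estimate. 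The uniform positive definiteness is furnished by Proposition \ref{propessinfglambdacondition}: condition \eqref{essinfglambda}, which is assumed here, is equivalent to $\sfB$ being $\Acal$-positive, and, by Definition \ref{defproper}, this is exactly the statement that $\Acal(\sfB)$ is uniformly positive definite with the constant $\alpha>0$ given in \eqref{essinfglambda}.

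Once these three properties are in hand, I would observe that the weak formulation \eqref{weakviscoelasticStokeswithpressure} is literally the weak formulation \eqref{weakgeneralizedStokeswithpressure} with $\sfA$ replaced by $\Acal(\sfB)$. Applying Theorem \ref{weaksolutionweakgeneralizedStokeswithpressure} (which requires $\Omega$ bounded Lipschitz, as assumed, together with exactly the three properties verified above and $\bbf\in H^{-1}(\Omega)^3$) yields the existence of a unique $\bv\in V(\Omega)$ and a scalar field $p\in L^2(\Omega)$, unique up to an additive constant. Finally, estimate \eqref{weaksolutionweakgeneralizedStokesestimaterepeatwithB} is a verbatim copy of \eqref{weaksolutionweakgeneralizedStokesestimaterepeat} with the same constant $\alpha$ from \eqref{essinfglambda}, and \eqref{weaksolutionweakgeneralizedStokesestimatewithpressurewithB} is \eqref{weaksolutionweakgeneralizedStokesestimatewithpressure} applied with $\sfA=\Acal(\sfB)$, so no additional manipulation is needed.

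There is no genuine obstacle here; the work has already been done in Sections \ref{secgenstokes} and \ref{conditionsellipticity}, and what might look like the hard part, namely controlling $\|\Acal(\sfB)\|_{L^\infty(\Omega)}$ appearing in \eqref{weaksolutionweakgeneralizedStokesestimatewithpressurewithB} in terms of $\sfB$ and the $\mu_i$, is handled cleanly by the explicit bound \eqref{estimateAcalBlinfty} from Proposition \ref{proplinftyboundonacalb} if one wishes to unfold the constant. Consequently the proof can be stated in a couple of sentences, citing Propositions \ref{propessinfglambdacondition} and \ref{proplinftyboundonacalb} to verify the hypotheses and then invoking Theorem \ref{weaksolutionweakgeneralizedStokeswithpressure}.
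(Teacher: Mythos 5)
Your proposal is correct and matches the paper's intent exactly: the authors state this theorem without proof precisely because it is the specialization $\sfA=\Acal(\sfB)$ of Theorem \ref{weaksolutionweakgeneralizedStokeswithpressure}, with the hypotheses verified via Proposition \ref{propessinfglambdacondition} (uniform positive definiteness from \eqref{essinfglambda}) and Proposition \ref{proplinftyboundonacalb} (essential boundedness of $\Acal(\sfB)$). Nothing is missing.
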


\begin{thm}[Regularity]
  \label{h2regularityforviscoelasticStokes}
  Under the hypotheses of Theorem \ref{weaksolutionweakviscoelasticdStokeswithpressure}, suppose moreover that $\Omega$ has a boundary of class $\Ccal^2$, $\sfB\in W^{1,\infty}(\Omega)^{3\times 3}$ with $\det \sfB = 1$, $\mu_1$, $\mu_2$, and $\mu_3\in W^{1,\infty}(\Omega)$, and $\bbf \in L^2(\Omega)^3$. Then, the solution $(\bv, p)$ of the weak formulation \eqref{weakviscoelasticStokeswithpressure} of the viscoelastic Stokes problem is such that $\bv \in V(\Omega)\cap H^2(\Omega)$ and $p\in H^1(\Omega)$, with 
  \begin{equation}
    \label{h2regularityforh2vwithB}
    \|D^2 \bv\|_{L^2(\Omega)} \leq \frac{c(\Omega)}{\alpha}\left( \|\bbf\|_{L^2(\Omega)} + \frac{1}{\alpha}\|\Acal(\sfB)\|_{W^{1,\infty}_{\lambda_1}(\Omega)}\|\bbf\|_{H^{-1}(\Omega)}\right),
  \end{equation}
  and
  \begin{equation}
    \label{h2regularityforh1pwithB}
    \|\bnabla p\|_{L^2(\Omega)} \leq \frac{c(\Omega)}{\alpha}\|\Acal(\sfB)\|_{L^\infty(\Omega)}\left( \|\bbf\|_{L^2(\Omega)} + \frac{1}{\alpha}\|\Acal(\sfB)\|_{W^{1,\infty}_{\lambda_1}(\Omega)}\|\bbf\|_{H^{-1}(\Omega)}\right),
  \end{equation}
  where $\lambda_1$ is the Poincar\'e constant, and $c(\Omega)$ is a shape constant.  Furthermore, $(\bv,p)$ solves the viscoelastic Stokes system \eqref{viscoelasticStokesorg} almost everywhere on $\Omega$, with the first two equations in \eqref{viscoelasticStokesorg} holding in $L^2(\Omega)^3$ and $H^1(\Omega)^3$, respectively, and with the last equation holding for the trace of $\bv$ on $\partial\Omega$ in the space $H^{3/2}(\partial \Omega)$.
\end{thm}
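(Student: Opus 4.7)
The plan is to deduce Theorem \ref{h2regularityforviscoelasticStokes} as a direct corollary of Theorem \ref{h2regularity} applied to the generalized Stokes problem \eqref{generalizedStokes} with the choice $\sfA = \Acal(\sfB)$, since the viscoelastic Stokes system \eqref{viscoelasticStokesorg} was shown in the Introduction to coincide with \eqref{viscoelasticStokes}, i.e.\ \eqref{generalizedStokes} with $\sfA=\Acal(\sfB)$. Nearly all the work is thus packaged in previously proved results; what remains is a verification step confirming that $\Acal(\sfB)$, under the current hypotheses, satisfies the structural assumptions required by Theorem \ref{h2regularity}, namely symmetry, uniform positive definiteness, and $W^{1,\infty}$ regularity.

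First I would verify that $\sfA = \Acal(\sfB)$ is a symmetric, uniformly positive definite tensor in the sense of Definition \ref{defuniformpositivedef}. Symmetry is immediate from the formula \eqref{defAcal}: since $\sfB$ is symmetric almost everywhere, so is $\sfB^{-1}$ (whose existence as an $L^\infty$ tensor follows from Proposition \ref{proplinftyboundonacalb}), and therefore $\Acal(\sfB) = \mu_1\sfI + \mu_2\sfB + \mu_3\sfB^{-1}$ is symmetric. Uniform positive definiteness with constant $\alpha$ given by \eqref{essinfglambda} follows at once from Proposition \ref{propessinfglambdacondition}, which is precisely the characterization of $\Acal$-positivity in terms of the eigenvalues of $\sfB$ and the pointwise function $g_\bx$ of \eqref{defg}; this is exactly what hypothesis \eqref{essinfglambda} assumes.

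Next I would verify the $W^{1,\infty}$ regularity of $\Acal(\sfB)$. This is exactly the content of Proposition \ref{propwoneinftyboundonacalb}, whose hypotheses ($\sfB\in W^{1,\infty}(\Omega)^{3\times 3}$ with $\det\sfB=1$ a.e., and $\mu_1,\mu_2,\mu_3\in W^{1,\infty}(\Omega)$) are precisely those assumed in the statement. The proposition delivers $\Acal(\sfB)\in W^{1,\infty}(\Omega)^{3\times 3}$ together with the explicit estimate \eqref{estimateAcalBwoneinfty} controlling $\|\Acal(\sfB)\|_{W^{1,\infty}_{\lambda_1}(\Omega)}$ in terms of the $W^{1,\infty}$ norms of the data $\sfB,\mu_1,\mu_2,\mu_3$.

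With symmetry, uniform positive definiteness with constant $\alpha$, and $W^{1,\infty}$ regularity of $\sfA=\Acal(\sfB)$ in hand, together with the assumed boundary regularity $\partial\Omega\in\Ccal^2$ and the data regularity $\bbf\in L^2(\Omega)^3$, the hypotheses of Theorem \ref{h2regularity} are met. Applying it directly yields $\bv\in V(\Omega)\cap H^2(\Omega)^3$ and $p\in H^1(\Omega)$, as well as the fact that $(\bv,p)$ solves \eqref{generalizedStokes}—equivalently \eqref{viscoelasticStokesorg}—almost everywhere, with the first equation holding in $L^2(\Omega)^3$, the divergence-free equation in $H^1(\Omega)$, and the no-slip boundary condition in the trace space $H^{3/2}(\partial\Omega)$. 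The estimates \eqref{h2regularityforh2vwithB} and \eqref{h2regularityforh1pwithB} are then \eqref{h2regularityforh2v} and \eqref{h2regularityforh1p} read verbatim with $\sfA$ replaced by $\Acal(\sfB)$. There is no substantive obstacle here: the analytic core was already carried out in Theorem \ref{h2regularity} (testing the localized and rectified equations, handling tangential and normal second-order derivatives near the boundary) and the algebraic/Sobolev input lies in Propositions \ref{propessinfglambdacondition} and \ref{propwoneinftyboundonacalb}; the proof is the combination of these ingredients.
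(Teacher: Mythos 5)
Your proposal is correct and follows exactly the route the paper intends: the authors state this theorem without proof as a direct consequence of Theorem \ref{h2regularity} applied with $\sfA=\Acal(\sfB)$, with the uniform positive definiteness supplied by Proposition \ref{propessinfglambdacondition} (via hypothesis \eqref{essinfglambda}) and the $W^{1,\infty}$ regularity of $\Acal(\sfB)$ supplied by Proposition \ref{propwoneinftyboundonacalb}. You have merely made explicit the verification the paper leaves implicit, and all the hypotheses match as you claim.
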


\begin{thm}[High-order regularity]
  \label{hkplustworegularityforviscoelasticStokes}
  Under the hypotheses of Theorem \ref{weaksolutionweakviscoelasticdStokeswithpressure}, suppose moreover that $\Omega$ has a boundary of class $\Ccal^{k+2}$, $\sfB\in W^{1,\infty}(\Omega)^{3\times 3}\cap W^{2,3}(\Omega)^{3\times 3} \cap H^{k+1}(\Omega)^{3\times 3}$ with $\det \sfB = 1$, $\mu_1$, $\mu_2$, and $\mu_3\in W^{1,\infty}(\Omega)\cap W^{2,3}(\Omega)\cap H^{k+1}(\Omega)$, and $\bbf \in H^k(\Omega)^3$. Then, the solution $(\bv, p)$ of the weak formulation \eqref{weakviscoelasticStokeswithpressure} of the viscoelastic Stokes problem is such that $\bv \in V(\Omega)\cap H^{k+2}(\Omega)$ and $p\in H^{k+1}(\Omega)$. Furthermore, $(\bv,p)$ solves the viscoelastic Stokes problem \eqref{viscoelasticStokesorg} almost everywhere on $\Omega$, with the first two equations in \eqref{viscoelasticStokesorg} holding in $H^{k-1}(\Omega)^3$ and $H^k(\Omega)$, respectively, and with the last equation holding for the trace of $\bv$ on $\partial\Omega$, in the space $H^{k+1/2}(\partial \Omega)$. Finally, depending on the value of $k$, the estimates \eqref{h2egularityforp}, \eqref{h2egularityforp}, \eqref{hkplus2regularityforv}, and \eqref{hkplus1regularityforp} hold with $\sfA$ replaced by $\Acal(\sfB)$.
\end{thm}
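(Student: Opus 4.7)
The plan is to reduce everything to the corresponding theorem for the generalized Stokes problem, namely Theorem \ref{hkplustworegularity}, applied with $\sfA=\Acal(\sfB)$. Since the algebraic identity derived in the introduction shows that \eqref{viscoelasticStokes} is literally \eqref{generalizedStokes} with $\sfA=\Acal(\sfB)$, once I have verified the hypotheses of Theorem \ref{hkplustworegularity} for $\Acal(\sfB)$, the existence of $(\bv,p)$ in the claimed regularity class, the strong form of the equations, the boundary trace interpretation, and all the quantitative estimates transfer automatically across the two formulations.

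Those hypotheses are two: (i) that $\Acal(\sfB)$ be a uniformly positive definite symmetric tensor; and (ii) that $\Acal(\sfB)\in W^{1,\infty}(\Omega)^{3\times 3}\cap W^{2,3}(\Omega)^{3\times 3}\cap H^{k+1}(\Omega)^{3\times 3}$. For (i), I will invoke Proposition \ref{propessinfglambdacondition} together with the standing assumption \eqref{essinfglambda} inherited from Theorem \ref{weaksolutionweakviscoelasticdStokeswithpressure}; this gives $\Acal(\sfB)$ uniformly positive definite with precisely the constant $\alpha>0$ appearing in the estimates. For (ii), I will assemble three pieces: Proposition \ref{propwoneinftyboundonacalb} for the $W^{1,\infty}$-piece, Proposition \ref{propw23boundonacalb} for the $W^{2,3}$-piece, and Proposition \ref{prophmboundonacalb} applied with $m=k+1$ for the $H^{k+1}$-piece. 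The hypotheses on $\sfB$, the condition $\det\sfB=1$, and the regularity of $\mu_1,\mu_2,\mu_3$ imposed in the statement are exactly what each of those three propositions requires, so the assembly proceeds without friction.

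Having verified (i) and (ii), Theorem \ref{hkplustworegularity} will immediately yield $\bv\in V(\Omega)\cap H^{k+2}(\Omega)$, $p\in H^{k+1}(\Omega)$, the system \eqref{viscoelasticStokesorg} satisfied almost everywhere in the spaces claimed, the boundary condition in the trace sense on $H^{k+1/2}(\partial\Omega)$, and the estimates \eqref{h3egularityforv}, \eqref{h2egularityforp} (in the case $k=1$) and \eqref{hkplus2regularityforv}, \eqref{hkplus1regularityforp} (in the case $k\geq 2$), with $\sfA$ replaced by $\Acal(\sfB)$. There is no genuine obstacle in this argument; the only step requiring any attention is the bookkeeping that identifies each $\Acal(\sfB)$-norm appearing on the right-hand side of these estimates with the corresponding quantitative bound in terms of $\sfB$ and the $\mu_i$, as provided by \eqref{estimateAcalBlinfty}, \eqref{estimateAcalBwoneinfty}, \eqref{estimateAcalBw23} and \eqref{estimateAcalBinhm}, should one wish to make the dependence on the original data of the viscoelastic problem fully explicit.
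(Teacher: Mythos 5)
Your proposal is correct and is precisely the argument the paper intends: indeed, the paper states Theorem \ref{hkplustworegularityforviscoelasticStokes} explicitly without proof, observing that it follows from Theorem \ref{hkplustworegularity} applied to $\sfA=\Acal(\sfB)$ once the uniform positive definiteness is secured by Proposition \ref{propessinfglambdacondition} (via \eqref{essinfglambda}) and the regularity of $\Acal(\sfB)$ by Propositions \ref{propwoneinftyboundonacalb}, \ref{propw23boundonacalb}, and \ref{prophmboundonacalb} with $m=k+1$. Your verification of the hypotheses and the final bookkeeping of the $\Acal(\sfB)$-norms match the paper's intended route exactly.
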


\begin{rmk}
  The estimates needed for $\Acal(\sfB)$ in Theorems \ref{weaksolutionweakviscoelasticdStokeswithpressure}, \ref{h2regularityforviscoelasticStokes}, and \ref{hkplustworegularityforviscoelasticStokes} can be taken to be those obtained in Propositions \ref{proplinftyboundonacalb}, \ref{propwoneinftyboundonacalb}, \ref{propw23boundonacalb}, and \ref{prophmboundonacalb}, but, of course, other estimates may be worked out if desired.
\end{rmk}

\begin{rmk}
  In a more general form, one obtains results analogous to Theorems \ref{weaksolutionweakviscoelasticdStokeswithpressure}, \ref{h2regularityforviscoelasticStokes}, and \ref{hkplustworegularityforviscoelasticStokes} without assuming explicit regularity conditions on $\sfB$ and instead assuming directly that the tensor $\sfB$ is such that $\Acal(\sfB)$ satisfies the assumptions for $\sfA$ in Theorems \ref{weaksolutionweakgeneralizedStokeswithpressure}, \ref{h2regularity}, and \ref{hkplustworegularity}, respectively. This is particularly useful when the material can be modelled with $\mu_3=0$ or in special situations or with special symmetries in which the regularity of $\Acal(\sfB)$ might be improved, or even if $\Acal(\sfB)$ depends on $\sfB$ on a different way.
\end{rmk}

\begin{rmk}
The same kind of results (in Theorems \ref{weaksolutionweakgeneralizedStokeswithpressure}, \ref{h2regularity} and \ref{hkplustworegularity}, for the generalized Stokes problem, and in Theorems \ref{weaksolutionweakviscoelasticdStokeswithpressure}, \ref{h2regularityforviscoelasticStokes}, and \ref{hkplustworegularityforviscoelasticStokes}, for the viscoelastic Stokes problem) hold under different homogenous boundary condition, such as fully periodic conditions or combinations of no-slip and periodic boundary conditions such as in a periodic channel. Non-homogenous boundary conditions can also be treated, provided they are the trace of functions with sufficient regularity. The details are omitted. 
\end{rmk}

\section{Conclusions}

We considered in detail a particular class of viscoelastic materials, modelled by system \eqref{viscoelasticStokesorg}. Our main interest was in modelling the deformation of different types of salts, needed for simulations of the exploitation of pre-salt oil fields. 

We obtained existence, uniqueness and regularity results for system \eqref{viscoelasticStokesorg}, based on natural conditions on the parameters $\sfB$ and $\mu_1, \mu_2,\mu_3$, and on the boundary $\Omega$. In particular, we identified the condition \eqref{essinfglambda} as the main condition for the uniform ellipticity of the problem, as well as the regularity conditions given in Section \ref{viscoelasticsec}.

For the existence, uniqueness and low-order regularity (up to $H^2$), we rewrote the system into a form suitable for the application of previously known results and verified all the necessary conditions.

For higher-order regularity (in $H^k$, $k\geq 3$), we exploited the Sobolev estimates valid in three dimensions and improved the regularity needed on the coefficients of the differential operator as compared with previously available results for similar elliptic systems.

We also obtained sharper and more explicit estimates for the regularity of the solution in terms of the parameters of the problem. These estimates are important for the well-posedness of the corresponding evolutionary problem to be presented in a forthcoming paper.

In regards to the ellipticity condition \eqref{essinfglambda}, which is of practical importance for checking the validity of a numerical simulation, we identified, in the case of constant coefficients $\mu_1, \mu_2, \mu_3$,  different intervals of the positive real line in which the eigenvalues of the left Cauchy-Green deformation tensor are allowed to belong to (see Lemma \ref{lemmuscenarios}). If the deformation is such that at least one of the eigenvalues escapes those intervals, the system is no longer elliptic. This explains the breakdown of our own simulations seen in some situations.

\end{document}